\def\l@subsection{\@tocline{2}{0pt}{1pc}{5pc}{}} \def\l@subsection{\@tocline{2}{0pt}{2pc}{6pc}{}}
\declaretheoremstyle[headfont=\bfseries,bodyfont=\itshape]{myplain}
\declaretheoremstyle[headfont=\bfseries,bodyfont=\normalfont]{mydefinition}
\declaretheoremstyle[headfont=\bfseries,bodyfont=\normalfont,qed=$\diamond$]{myexample}
\declaretheoremstyle[headfont=\itshape,bodyfont=\normalfont, qed=$\diamond$]{myremark}
\declaretheorem[style=myplain, numberwithin=section, name=Theorem]{theo}
\declaretheorem[style=myplain, sharenumber=theo, name=Lemma]{lemma}
\declaretheorem[style=myplain, sharenumber=theo, name=Corollary]{coroll}
\declaretheorem[style=myplain, sharenumber=theo, name=Proposition]{prop}
\declaretheorem[style=myplain, sharenumber=theo, name=Proposition/Definition]{prop/def}
\declaretheorem[style=mydefinition, sharenumber=theo, name=Definition]{definition}
\declaretheorem[style=myexample, sharenumber=theo, name=Example]{example}
\declaretheorem[style=myremark, sharenumber=theo, name=Remark]{rem}
\numberwithin{equation}{section}
\newcommand{\dA}{d_{D}}
\newcommand{\OA}{\Omega_{D}}
\newcommand{\rank}{\operatorname{rank}}
\newcommand{\im}{\operatorname{im}}
\newcommand{\mk}{\mathsf{M}K}
\newcommand{\mc}{\mathsf{M}R}
\newcommand{\pr}{\mathrm{pr}}
\newcommand{\filleddiamond}{\text{\scalebox{0.65}{$\blacksquare$}}}
\begin{document}
	\title{Shifted Contact Structures on differentiable stacks}
	
	\author{Antonio Maglio}
	\address{DipMat, Università degli Studi di Salerno, Via Giovanni Paolo II n°132, 84084 Fisciano (SA), Italy}
	\curraddr{}
	\email{\href{mailto:anmaglio@unisa.it}{anmaglio@unisa.it}}
	\thanks{}
	
	\author{Alfonso Giuseppe Tortorella}
	\address{DipMat, Università degli Studi di Salerno, Via Giovanni Paolo II n°132, 84084 Fisciano (SA), Italy}
	\curraddr{}
	\email{\href{mailto:atortorella@unisa.it}{atortorella@unisa.it}}
	\thanks{}
	
	\author{Luca Vitagliano}
	\address{DipMat, Università degli Studi di Salerno, Via Giovanni Paolo II n°132, 84084 Fisciano (SA), Italy}
	\curraddr{}
	\email{\href{mailto:lvitagliano@unisa.it}{lvitagliano@unisa.it}}
	\thanks{}
	
	
	\begin{abstract}
	We define \emph{$0$-shifted} and \emph{$+1$-shifted contact structures} on differentiable stacks, thus laying the foundations of \emph{shifted Contact Geometry}. As a side result we show that the kernel of a multiplicative $1$-form on a Lie groupoid (might not exist as a Lie groupoid but it) always exists as a differentiable stack, and it is naturally equipped with a stacky version of the curvature of a distribution. Contact structures on orbifolds provide examples of $0$-shifted contact structures, while prequantum bundles over $+1$-shifted symplectic groupoids provide examples of $+1$-shifted contact structures. Our shifted contact structures are related to shifted symplectic structures via a Symplectic-to-Contact Dictionary.
		\end{abstract}

	\maketitle

	\tableofcontents
	
	\section{Introduction}
	A \emph{contact structure} on a manifold $M$ is a hyperplane distribution $K\subseteq TM$ which is maximally non-integrable, i.e.~the \emph{curvature} $R_{K}\colon \wedge^2K \to L:=TM/K$, defined by setting $R_{K}(X,Y)=[X,Y]\operatorname{mod} K$, is non-degenerate. Dually, given a line bundle $L\to M$, and a nowhere zero $L$-valued $1$-form $\theta\in \Omega^1(M,L)$, we call $\theta$ a \emph{contact form}, if the \emph{curvature} $R_{\theta}\colon \wedge^2 K\to L$, defined by setting $R_{\theta}(X,Y)=- \theta([X,Y])$ is non-degenerate, where $K := \ker \theta$. Given a contact structure $K$, the projection $TM\to L:=TM/K$ is a contact form. Conversely, given a contact form $\theta$, the kernel $K=\ker \theta$ is a contact structure. Under this correspondence, $R_{K} = -R_{\theta}$. Contact structures are supported by odd dimensional manifolds. Contact Geometry is the geometry of contact structures, and can be seen as an odd dimensional analogue of Symplectic Geometry in many respects. In fact, there is a ``symplectic-like'' point of view on contact structures. Namely, the latter are equivalent to \emph{symplectic Atiyah forms} \cite{Vitagliano:djbundles} (see Section \ref{sec:atiyah} below, see also \cite{Grabowski:remarks} for an alternative approach). 
	
	Differentiable stacks \cite{Xu:stacks,DelHoyo:stacks} are (equivalent to) classes of Morita equivalent Lie groupoids and they model certain singular spaces like orbifolds, orbit spaces of smooth Lie group actions and leaf spaces of foliations. Roughly, geometric structures on  differentiable stacks are Morita invariant geometric structures on Lie groupoids, and they usually possess a grading, sometimes called the \emph{shift}, essentially inherited from the grading of the simplicial structure on the nerve of the groupoid. 	The reader can refer, e.g., to \cite{Getzler:shifted, Cueca:shiftedstructures} for a theory of shifted differential forms and shifted symplectic structures on (higher) Lie groupoids (see also, e.g., \cite{Pantev:shiftedstructures} for the algebraic geometric setting). A $0$-shifted symplectic structure on a Lie groupoid $G \rightrightarrows M$ \cite{Hoffman:stacky} is a closed differential $2$-form $\omega$ on $M$ which is \emph{basic} with respect to the partition of $M$ by $G$-orbits, and non-degenerate in the transverse direction (in other words $\ker \omega$ agrees with the tangent distribution to $G$-orbits). When the differentiable stack $[M/G]$ presented by $G$ is a smooth manifold, then $\omega$ defines an ordinary symplectic structure on it. The definition of a $+1$-shifted symplectic structure is somewhat more involved and it is relevant for Poisson Geometry (see \cite{Xu:momentum_maps} and \cite{Zhu:twisted}, but beware that different terminologies are adopted in those references). Indeed, $+1$-shifted symplectic structures are the global counterparts of \emph{twisted Dirac structures} \cite{Zhu:twisted}.  
	
	The main aim of this paper is to lay some foundations of \emph{shifted Contact Geometry} by defining \emph{$0$-shifted} and \emph{$+1$-shifted contact structures} on differentiable stacks. We remark that shifted Contact Geometry has been also initiated in Derived Algebraic Geometry in \cite{BerktavA,BerktavB}. However, in those references, the focus is rather on negatively shifted contact structures. It turns out that, for our purposes, the dual definition of a contact structure in terms of a line bundle valued differential $1$-form is more appropriate (than that in terms of a hyperplane distribution). There are five aspects of such definition for which we need to find (Lie groupoid and) Morita invariant versions and we highlight them here for future reference:
	\bigskip
	\begin{mdframed}
		\begin{enumerate}
	\item[(A1)] a line bundle $L \to M$,
	\item[(A2)] an $L$-valued differential $1$-form $\theta$,
	\item[(A3)] the kernel $K_\theta$ of $\theta$ (seen as a vector bundle),
	\item[(A4)] the curvature $R_\theta$ of $\theta$ (seen as a vector bundle map $R_\theta \colon K_\theta \to \operatorname{Hom}( K_\theta, L)$),
	\item[(A5)] the non-degeneracy condition on $R_\theta$,
	\end{enumerate}
	\end{mdframed}
	\bigskip
(in what follows we will sometimes refer to the items (A1)--(A5) above simply as ``the aspects'').	We will first discuss $0$-shifted contact structures, and then concentrate on $+1$-shifted contact structures. In order to take care of (A1), we begin with a line bundle $L \to G$ which is additionally a VB groupoid and its VB Morita equivalence class (see \cite{DelHoyo:VBmorita} for the notion of VB Morita equivalence) that, for the purposes of this paper, we interpret, without any claim of generality, as a line bundle in the category of differentiable stacks. For (A2) we equip $G$ with a multiplicative $L$-valued $1$-form $\theta \in \Omega^1 (G, L)$. At this point, we would like to define the kernel and the curvature of $\theta$. However, the ordinary notions of kernel and curvature are only well-defined when $\theta$ is nowhere zero, which is a too strong requirement for our purposes (it actually violates Morita invariance). Fortunately, in the $+1$-shifted case, we can show that, while the ordinary kernel of $\theta$ does not always exist in the categories of smooth manifolds and Lie groupoids, it always exists in the category of differentiable stacks. Namely, for (A3), we are able to define a VB groupoid $\mk_\theta \to G$ which plays the same role as the kernel of $\theta$ (and reduces to that when the kernel exists) up to Morita equivalences. We call $\mk_\theta \to G$ the \emph{Morita kernel} of $\theta$ and show that the latter is a Morita invariant notion. Similarly, for (A4), we can define a Morita invariant notion of curvature of $\theta$. It is a VB groupoid map $\mc_\theta\colon \mk_\theta \to \operatorname{Hom}(\mk_\theta, L)$ which we call the \emph{Morita curvature}. Finally, for (A5), there is a natural and Morita invariant notion of \emph{non-degeneracy} for $\mc_\theta$ which allows us to give a Morita invariant definition of $+1$-shifted contact structure on a differentiable stack. We expect that the Morita kernel and the Morita curvature will be also relevant for other purposes, e.g., one could give a definition of \emph{involutive hyperplane distribution on a differentiable stack} using these same notions. Notice that the definition of shifted symplectic structure can be straighforwardly translated into a definition of shifted symplectic Atiyah form. We also prove that, similarly as in ordinary Contact Geometry, $+1$-shifted contact structures are equivalent to $+1$-shifted symplectic Atiyah forms. This result strongly supports our definition.
	
The paper is organized as follows. In Section \ref{sec:atiyah} we recall what are symplectic Atiyah forms and how they can encode contact structures (see \cite{Vitagliano:djbundles,Vitagliano:holomorphic}). In Section \ref{sec:lb} we recall the necessary aspects of the theory of \emph{VB groupoids}, i.e.~vector bundles in the category of Lie groupoids \cite{Mackenzie:generaltheory, Gracia:VBgroupoids}, including their Morita theory \cite{DelHoyo:VBmorita}, with a special emphasis on what we call LB groupoids. This takes care of the aspect (A1) in the definition of a contact structure. Let $G \rightrightarrows M$ be a Lie groupoid. The \emph{fiber of a VB groupoid} over a point of $M$ is a $2$-term cochain complex of vector spaces and a VB groupoid morphism induces a cochain map between the fibers. There is an extremely useful characterization of \emph{VB Morita maps} (i.e.~VB groupoid morphisms which are also Morita maps), due to del Hoyo-Ortiz \cite{DelHoyo:VBmorita}, which we use throughout the paper. Namely a VB groupoid morphism is a VB Morita map if and only if it is a Morita map on bases and, additionally, if it induces quasi-isomorphisms on fibers. We also introduce in this section a notion of natural isomorphism between two VB groupoid morphisms adapted to the vector bundle structures, which we call \emph{linear natural isomorphism} and we will need in the sequel. We show that a linear natural isomorphism between VB groupoid morphisms covering the same Lie groupoid map is equivalent to a homotopy between the cochain maps induced on fibers (smoothly depending on the base point). To the best of our knowledge, this is new (but see \cite[Section 6.1]{DelHoyo:VBmorita}).

In Section \ref{sec:0-shifted_cs}, after recalling what are \emph{$0$-shifted symplectic structures}, we define \emph{$0$-shifted symplectic Atiyah forms} and \emph{$0$-shifted contact structures}. This requires some work in order to take due care of all the aspects (A2)--(A5) of the definition. Such work will be of some inspiration for the subsequent $+1$-shifted case. However, notice that, in some respects, the $0$-shifted case is more complicated than the $+1$-shifted case (see below).

In Section \ref{sec:def1shif} we recall the definition of \emph{$+1$-shifted symplectic structure} \cite{Xu:momentum_maps,Zhu:twisted, Cueca:shiftedstructures,Getzler:shifted} and we translate it into a definition of \emph{$+1$-shifted symplectic Atiyah form} in Section \ref{sec:Atiyah_shifted}. This is straightforward. $+1$-shifted symplectic Atiyah forms can serve as a first definition of $+1$-shifted contact structure, indicating the effectiveness of the language of Atiyah forms in Contact Geometry.
	
In the remainder of Section \ref{sec:contact} we provide a definition of \emph{$+1$-shifted contact structure on a differentiable stack} more similar in spirit to the definition of a contact structure in terms of contact forms and/or hyperplane distributions. We begin with a \emph{multiplicative line bundle valued $1$-form} (aspect (A2)). Next, we have to define \emph{stacky versions} of the kernel and the curvature of a (line bundle valued) $1$-form, the \emph{Morita kernel} (aspect (A3)) and the \emph{Morita curvature} (aspect (A4)). Finally, we propose a definition of when is the Morita curvature ``non-degenerate'' (aspect (A5)). According to our main purpose, we pay special attention to Morita invariance of all the constructions. As a strong motivation for our definition, we show in this section that $+1$-shifted contact structures are equivalent to $+1$-shifted sympectic Atiyah forms.

We conclude Section \ref{sec:contact} with two main examples. First of all, the pre-quantization of a (pre-quantizable) quasi-symplectic groupoid \cite{Xu:prequantization} is a $+1$-shifted contact groupoid. In this case, the $1$-form is nowhere-zero and its Morita kernel can be replaced by the honest kernel (up to Morita equivalence). This suggests that there might be a more general notion of \emph{pre-quantization of a $+1$-shifted symplectic stack}, more stacky friendly, and where the contact $1$-form is non-necessarily nowhere-zero. Conjecturally, this might enlarge the class of pre-quantizable $+1$-shifted symplectic forms. This line of thought will be explored elsewhere. Finally, pre-contact groupoids in the sense of \cite{Vitagliano:djbundles} (in particular contact groupoids) do also provide instances of $+1$-shifted contact structures. Recall that pre-contact groupoids are the contact analogues of quasi-symplectic groupoid and they are the global counterparts of \emph{Dirac-Jacobi bundles} (the contact analogues of Dirac manifolds). 
	
	We assume that the reader is familiar with Lie groupoids and Lie algebroids. Our main references for this material are \cite{CF2011, Mackenzie:generaltheory}. For simplicity, we work by default with Hausdorff Lie groupoids, but most of our results are also valid for the larger class of Lie groupoids admitting an Ehresmann connection.
	
	\bigskip
	
	{\bf Acknowledgments.} \ \ 
	We thank Chenchang Zhu for having suggested to us that the prequantization of $+1$-shifted symplectic structures might provide examples of $+1$-shifted contact structures. We also thank Miquel Cueca for helpful comments. Finally, we wish to warmly thank the anonymous referees for extremely useful suggestions that helped improving a lot the original manuscript. The authors are members of the GNSAGA of INdAM.

	\section{Contact Structures as Symplectic Atiyah Forms}\label{sec:atiyah}
	
	In this section we recall from \cite{Vitagliano:djbundles} the alternative approach to contact structures based on \emph{Atiyah forms} (see also \cite{Grabowski:remarks} and Remark \ref{rem:hom_sympl} for a closely related, but different, approach). We begin with some facts about the Atiyah algebroid of a vector bundle.
	
	Let $E\to M$ be a vector bundle (VB in what follows) over $M$. A \emph{derivation of $E$} is an $\mathbbm{R}$-linear operator $\Delta\colon\Gamma(E)\to \Gamma(E)$ such that, for any $f\in C^{\infty}(M)$ and $e\in \Gamma(E)$, the following Leibniz rule is satisfied: $$\Delta(fe)=X(f)e +f\Delta(e)$$for a, necessarily unique, vector field $X\in \mathfrak{X}(M)$, called the \emph{symbol} of $\Delta$ and also denoted by $\sigma(\Delta)$.
	
	Derivations of $E$ are sections of a Lie algebroid $DE\to M$, the \emph{Atiyah algebroid of $E$}, defined as follows. The fiber $D_xE$ of $DE$ over $x\in M$ consists of $\mathbbm{R}$-linear operators $\delta\colon\Gamma(E)\to E_x$ such that, for any $f\in C^{\infty}(M)$ and $e\in \Gamma(E)$,
	\begin{equation*}
		\delta(fe)= v(f)e_x + f(x)\delta(e)
	\end{equation*}
	for a, necessarily unique, tangent vector $v\in T_xM$, the \emph{symbol} of $\delta$, which is also denoted by $\sigma(\delta)$. The Lie bracket on sections of $DE$ is the commutator of derivations, while the anchor $\sigma\colon DE\to TM$ maps a derivation to its symbol.
	The symbol map fits in the following short exact sequence of vector bundles
	\begin{equation}\label{eq:Spencer}
		\begin{tikzcd}
			0 \arrow[r] &\operatorname{End}E \arrow[r] &DE \arrow[r, "\sigma"] &TM \arrow[r] &0
		\end{tikzcd} ,
	\end{equation}
	where $\operatorname{End}E$ is the vector bundle of endomorphisms, and the map $\operatorname{End}E\to DE$ is the inclusion. A connection on $E$ is a right splitting $\nabla\colon TM \to DE$ of \eqref{eq:Spencer}, so it determines a direct sum decomposition
	\begin{equation*}
		DE \cong TM \oplus \operatorname{End}E, \quad \delta \mapsto \big(\sigma(\delta), f_{\nabla}(\delta)\big),
	\end{equation*}
	where $f_{\nabla}\colon DE \to \operatorname{End}E$ is the associated left splitting: 
	\begin{equation*}
		f_{\nabla}(\delta) e_x= \delta(e)- \nabla_{\sigma(\delta)}e, \quad \text{for all } \delta \in D_xE, \quad e\in \Gamma(E).
	\end{equation*}
	It follows that $\rank DE= \dim M + (\rank E)^2$.
	
	The Atiyah algebroid $DE$ naturally acts on $E$: the action of a derivation on a section is just the tautological one. Then there is a de Rham-like differential $\dA$ on the graded vector space $\OA^{\bullet}(E):= \Gamma(\operatorname{Alt}^{\bullet}(DE, E))$ of $E$-valued alternating forms on $DE$. The cochain complex $(\OA^{\bullet}(E), \dA)$ is sometimes called the \emph{der-complex} \cite{Rubtsov:dercomplex} and it is actually acyclic. Even more, it possesses a canonical contracting homotopy given by the contraction $\iota_{\mathbbm{I}}\colon \OA^{\bullet}(E)\to \OA^{\bullet-1}(E)$ with the \emph{identity derivation} $\mathbbm{I}\colon \Gamma(E)\to \Gamma(E)$. In the sequel, cochains in  $(\OA^{\bullet}(E), \dA)$ will be called \emph{Atiyah forms} (on $E$).
	
	The correspondence $E\leadsto DE$ is functorial in the following sense. Let $E\to M$, $E' \to M'$ be two vector bundles. First of all, we say that a VB morphism $(F, f)\colon E \to E'$ covering a smooth map $f \colon M \to M'$, is \emph{regular} if, for any $x \in M$, the restriction $F_x := F|_{E_x} \colon E_x \to E'_{f(x)}$ of $F$ to fibers is an isomorphism. Given a regular VB morphism $(F, f) \colon E\to E'$, a section $e'\in \Gamma(E')$ can be pulled-back to a section $F^{\ast}e'\in \Gamma(E)$, defined by $(F^{\ast}e')_x = F_x^{-1}(e'_{f(x)})$, $x\in M$. The map $DF \colon DE \to DE'$ defined by 
	\begin{equation*}
		DF(\delta)e':=F\big(\delta(F^{\ast}e')\big), \quad \delta\in DE, \quad e'\in \Gamma(E'),
	\end{equation*}
	is a Lie algebroid morphism covering $f$.
	It easy to see that
	\begin{equation}
		\label{eq:DFcommutes}
		\sigma \circ DF = df \circ \sigma.
	\end{equation}
	Notice that any regular VB morphism $(F, f)\colon E\to E'$ induces a \emph{pullback of Atiyah forms}, $F^{\ast}\colon \OA^{\bullet}(E')\to \OA^{\bullet}(E)$: for all $x \in M$,
	\begin{equation*}
		(F^{\ast}\omega)_{x}(\delta_1, \dots, \delta_k):=F_x^{-1}\big(\omega_{f(x)}(DF(\delta_1), \dots, DF(\delta_k))\big), \quad \delta_1, \dots, \delta_k \in D_xE, \quad \omega \in \OA^k(E').
	\end{equation*}
	Finally, $(F, f)\colon E\to E'$ does also induce a \emph{pullback of vector bundle valued forms}, $F^{\ast}\colon \Omega^{\bullet}(M',E')\to \Omega^{\bullet}(M,E)$: for all $x \in M$,
	\begin{equation*}
		(F^{\ast}\theta)_x(v_1, \dots, v_k):=F_x^{-1}\big(\theta_{f(x)}(df(v_1), \dots,df(v_k))\big), \quad v_1,\dots, v_k\in T_xM, \quad \theta\in \Omega^k(M',E').
	\end{equation*}
	
	\begin{rem}
		\label{rem:Atiyahderivation}
		Let $E\to M$ be a vector bundle. The Atiyah algebroid $DE\to M$ is the Lie algebroid of the general linear groupoid $\operatorname{GL}(E)$. Indeed any derivation $\delta\in D_xE$ is the velocity of a curve of isomorphisms $\Upsilon(\varepsilon)\colon E_x\to E_{\gamma(\varepsilon)}$ with $\Upsilon(0)=\operatorname{id}_{E_x}$, where $\gamma(\varepsilon)$ is a curve on $M$, i.e. 		
		\begin{equation*}
			\delta(e)= \frac{d}{d\varepsilon}|_{\varepsilon=0} \Upsilon(\varepsilon)^{-1}(e_{\gamma(\varepsilon)}), \quad e\in \Gamma(E).
		\end{equation*}
		In this case we write $\delta= \tfrac{d}{d\varepsilon}|_{\varepsilon=0} \Upsilon(\varepsilon)$. Notice that $\sigma (\delta) =\tfrac{d}{d\varepsilon}|_{\varepsilon=0} \gamma(\varepsilon)$.

		If $(F,f)\colon (E\to M)\to (E'\to M')$ is a regular VB morphism, then the induced map $DF\colon DE\to DE'$ works as follows: for any $\delta= \tfrac{d}{d\varepsilon}|_{\varepsilon=0} \Upsilon(\varepsilon)\in D_xE$, $x \in M$, the derivation $DF(\delta)\in D_{f(x)}E'$ is given by 
		\begin{equation*}
			\frac{d}{d\varepsilon}|_{\varepsilon=0} F_{\gamma(\varepsilon)}\circ \Upsilon(\varepsilon)\circ F_x^{-1}.
		\end{equation*}
	\end{rem}

	In the case when $E=L$ is a line bundle, $\operatorname{End} E = \operatorname{End} L = \mathbbm R_M := M \times \mathbbm R$, the trivial line bundle over $M$, the sequence \eqref{eq:Spencer} reduces to
\begin{equation}\label{eq:Spencer_L}
		\begin{tikzcd}
			0 \arrow[r] &\mathbbm R_M \arrow[r] &DL \arrow[r, "\sigma"] &TM \arrow[r] &0
		\end{tikzcd},
	\end{equation}
and, given a connection $\nabla$ on $L$, the associated left splitting $f_\nabla \colon DL \to \mathbbm R_M$ is a fiber-wise linear function on $DL$. Moreover, in this case, every first order linear differential operator $\Gamma(L)\to \Gamma(L)$ is a derivation, so $DL \cong \operatorname{Hom}(J^1L,L)$ and $J^1L\cong \operatorname{Hom}(DL,L)$, where $J^1L$ is the first jet bundle of $L$. 
	
	Let $L\to M$ be a line bundle. There is an alternative description of Atiyah forms on $L$ as pairs of forms in $\Omega^{\bullet}(M,L)$. To see this, begin noticing that, for any $k$, the map $\sigma^{\ast}\colon \Omega^k(M,L) \to \OA^k(L)$ defined by:
	\begin{equation*}
		(\sigma^{\ast}\theta)(\Delta_1, \dots, \Delta_k) =\theta\big(\sigma(\Delta_1), \dots, \sigma(\Delta_k)\big) \quad \text{for all } \Delta_1, \dots ,\Delta_k \in \Gamma(DL), 
	\end{equation*}
	is injective and $\im \sigma^{\ast}= \ker \iota_{\mathbbm{I}}$. Hence there is a short exact sequence of $C^{\infty}(M)$-modules
	\begin{equation}
		\label{ses_atiyah}
		\begin{tikzcd}
			0 \arrow[r] &\Omega^{\bullet}(M,L) \arrow[r, "\sigma^{\ast}"] &\OA^{\bullet}(L) \arrow[r] &\Omega^{\bullet-1}(M,L)\arrow[r] &0,
		\end{tikzcd}
	\end{equation}
	where the projection $\OA^{\bullet}(L)\to \Omega^{\bullet -1}(M,L)$ maps the Atiyah form $\omega \in \OA^{\bullet}(L)$ to the form $\omega_0\in \Omega^{\bullet -1}(M,L)$ uniquely determined by $\sigma^{\ast}\omega_0=\iota_{\mathbbm{I}}\omega$. The map 
	\[
	\Omega^{\bullet -1}(M,L)\to \OA^{\bullet}(L), \quad \tau\mapsto \dA(\sigma^{\ast}\tau),
	\]
	is a canonical $\mathbbm{R}$-linear splitting of \eqref{ses_atiyah}. Accordingly, there is a vector space isomorphism
	\begin{equation}
		\label{eq:components}
		\Omega^{\bullet-1}(M,L)\oplus \Omega^{\bullet}(M,L) \to \OA^{\bullet}(L), \quad
		(\omega_0, \omega_1) \mapsto \omega:=\sigma^{\ast}\omega_1+ \dA\sigma^{\ast}\omega_0.
	\end{equation}
	The two forms $\omega_0$ and $\omega_1$ will be called the \emph{components} of $\omega$, and we write $\omega \rightleftharpoons (\omega_0, \omega_1)$. Moreover, if $\omega\rightleftharpoons (\omega_0,\omega_1)$, then $\dA\omega \rightleftharpoons (\omega_1,0)$ and $\iota_{\mathbbm{I}}\omega \rightleftharpoons (0, \omega_0)$. In particular $\omega$ is closed if and only if $\omega_1=0$.
	
	Now, let $(F, f) \colon (L \to M) \to (L' \to N)$ be a regular VB morphism between two line bundles. In this case we call $(F, f)$ an \emph{LB morphism} (LB for line bundle). It is easy to see that
	\begin{equation*}
		F^{\ast}(\sigma^{\ast}\theta)= \sigma^{\ast}(F^{\ast}\theta), \quad \theta\in \Omega^{\bullet}(N,L').
	\end{equation*}
	It follows that, if $\omega\rightleftharpoons (\omega_0, \omega_1)$ is an Atiyah form on $L'$, then $F^{\ast}\omega\rightleftharpoons (F^{\ast}\omega_0, F^{\ast}\omega_1)$.
	
	The next proposition will be useful later.	
	\begin{prop}\label{prop:ker_omega_comp}
		Let $\omega\in \OA^2(L)$ be an Atiyah $2$-form and let $(\omega_0,\omega_1)\in \Omega^1(M,L)\oplus\Omega^2(M,L)$ be its components. Let $\delta, \delta'\in D_xL$, and set $v=\sigma(\delta), v'=\sigma(\delta')\in T_xM$, $x \in M$. Then, for any connection $\nabla$ on $L$,
		\begin{equation*}
			\omega(\delta, \delta')= \omega_1(v, v') + d^{\nabla}\omega_0(v,v') +f_{\nabla}(\delta) \omega_0(v') - f_{\nabla}(\delta') \omega_0(v),
		\end{equation*}
		where $d^\nabla \colon \Omega^\bullet (M, L) \to \Omega^{\bullet +1} (M, L)$ is the connection differential. In particular, when $\omega$ is $\dA$-closed (i.e. $\omega_1=0$),
		\begin{equation}
			\label{eq:omegaandcomponents}
			\omega(\delta, \delta')=d^{\nabla}\omega_0(v,v') +f_{\nabla}(\delta) \omega_0(v') - f_{\nabla}(\delta') \omega_0(v).
		\end{equation}
	\end{prop}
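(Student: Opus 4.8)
The starting point is the canonical decomposition $\omega = \sigma^{\ast}\omega_1 + \dA\sigma^{\ast}\omega_0$ recorded in \eqref{eq:components}. The plan is to evaluate the two summands separately on the pair $(\delta,\delta')$. The first one is immediate from the definition of $\sigma^{\ast}$: $(\sigma^{\ast}\omega_1)(\delta,\delta') = \omega_1(\sigma(\delta),\sigma(\delta')) = \omega_1(v,v')$. So everything reduces to computing $(\dA\sigma^{\ast}\omega_0)(\delta,\delta')$ and matching it with $d^{\nabla}\omega_0(v,v') + f_{\nabla}(\delta)\omega_0(v') - f_{\nabla}(\delta')\omega_0(v)$.

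Since $\dA\sigma^{\ast}\omega_0$ is a genuine Atiyah $2$-form, i.e.\ a section of $\operatorname{Alt}^2(DL,L)$, it is evaluated pointwise, and each term on the right-hand side is likewise well defined pointwise once one notes that $f_{\nabla}(\delta)$ is a real number and $\omega_0(v')\in L_x$. Hence I may extend $\delta,\delta'$ arbitrarily to sections $\Delta,\Delta'\in\Gamma(DL)$, with symbols $X:=\sigma(\Delta)$ and $X':=\sigma(\Delta')$ extending $v,v'$, compute at the level of sections, and restrict to $x$ at the end. Applying the Koszul-type formula for the der-differential to the Atiyah $1$-form $\sigma^{\ast}\omega_0$ gives
\[
(\dA\sigma^{\ast}\omega_0)(\Delta,\Delta') = \Delta\big(\omega_0(X')\big) - \Delta'\big(\omega_0(X)\big) - \omega_0\big([X,X']\big),
\]
where I used $(\sigma^{\ast}\omega_0)(\Delta)=\omega_0(\sigma(\Delta))=\omega_0(X)$ and the fact that $\sigma\colon DL\to TM$ is a Lie algebroid morphism, so $\sigma([\Delta,\Delta'])=[X,X']$.

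Now I bring in the connection $\nabla$. Because $\operatorname{End}L\cong\mathbbm{R}_M$, the left splitting value $f_{\nabla}(\Delta)\in\Gamma(\operatorname{End}L)=C^{\infty}(M)$ acts on sections of $L$ by pointwise multiplication, and by the very definition of $f_{\nabla}$ one has $\Delta(s)=\nabla_X s + f_{\nabla}(\Delta)\,s$ for every $s\in\Gamma(L)$. Using this once with $s=\omega_0(X')$ and once with $s=\omega_0(X)$ turns the previous display into
\[
(\dA\sigma^{\ast}\omega_0)(\Delta,\Delta') = \Big(\nabla_X\big(\omega_0(X')\big)-\nabla_{X'}\big(\omega_0(X)\big)-\omega_0\big([X,X']\big)\Big) + f_{\nabla}(\Delta)\,\omega_0(X') - f_{\nabla}(\Delta')\,\omega_0(X),
\]
and the parenthesized term is exactly $(d^{\nabla}\omega_0)(X,X')$ by the definition of the connection differential. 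Evaluating at $x$ and adding back $\omega_1(v,v')$ yields the claimed identity; the case $\omega_1=0$ (equivalently $\dA\omega=0$) is then just a specialization. There is no serious obstacle here: the computation is a two-line application of the der-Cartan formula, and the only point deserving attention is the identification, via $\operatorname{End}L\cong\mathbbm{R}_M$, of $f_{\nabla}(\delta)$ with a scalar acting on $L_x$ by multiplication, which is precisely what makes the last two terms meaningful at the single point $x$.
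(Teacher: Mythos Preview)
Your proof is correct and follows essentially the same approach as the paper: both start from the decomposition $\omega=\sigma^{\ast}\omega_1+\dA\sigma^{\ast}\omega_0$, extend $\delta,\delta'$ to sections, apply the Koszul formula for $\dA$ to $\sigma^{\ast}\omega_0$, and then split each derivation as $\nabla_{\text{symbol}}+f_{\nabla}$ to recognize $d^{\nabla}\omega_0$ plus the two scalar terms.
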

	\begin{proof}
		Let $\delta, \delta', v, v'$ be as in the statement, and let $\Delta,\Delta'\in \Gamma(DL)$ be such that $\Delta_x=\delta,\Delta'_x=\delta'$. Set $V := \sigma (\Delta),V' := \sigma (\Delta') \in \mathfrak{X}(M)$ so that $V_x = v, V'_x = v'$. Then
		\begin{align*}
			\dA\sigma^{\ast}\omega_0(\delta,\delta') &=\delta(\sigma^{\ast}\omega_0(\Delta')) - \delta'(\sigma^{\ast}\omega_0(\Delta)) - \sigma^{\ast}\omega_0([\Delta, \Delta']_x) \\
			&=\delta(\omega_0(V')) - \delta'(\omega_0(V)) - \omega_0([V,V']_x)\\
			&= (\delta-\nabla_v)\omega_0(v') - (\delta'-\nabla_{v'})\omega_0(v) + \nabla_v\omega_0(V') -\nabla_{v'}\omega_0(V) -\omega_0([V,V']_x)\\
			&=d^{\nabla}\omega_0(v,v') +f_{\nabla}(\delta)\omega_0(v') - f_{\nabla}(\delta')\omega_0(v).
		\end{align*}
		Now, the claim follows from $\omega=\sigma^{\ast}\omega_1+\dA\sigma^{\ast}\omega_0$.
	\end{proof}
	
	An easy consequence of Proposition \ref{prop:ker_omega_comp} is the following
	\begin{coroll}
		\label{coroll:kernelofAtiyahform}
		Let $\omega\in \OA^2(L)$ and $\omega\rightleftharpoons(\omega_0,\omega_1)$. Then a derivation $\delta\in D_xL$ is in the kernel of $\omega$ if and only if
		\begin{itemize}
			\item[\emph{i)}] $\sigma(\delta)\in \ker(\omega_0)$, and
			\item[\emph{ii)}] $\iota_{\sigma(\delta)}(\omega_1+d^{\nabla}\omega_0) + f_{\nabla}(\delta)\omega_0=0$
		\end{itemize}
		for some, hence any, connection $\nabla$ on $L$.
	\end{coroll}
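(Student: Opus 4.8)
The plan is to read everything off the formula in Proposition \ref{prop:ker_omega_comp}. Recall that $\delta \in D_xL$ lies in $\ker\omega$ precisely when $\omega(\delta,\delta') = 0$ for every $\delta' \in D_xL$. Fix once and for all a connection $\nabla$ in $L$. It splits \eqref{eq:Spencer_L} at $x$, so the assignment $\delta' \mapsto (\sigma(\delta'), f_\nabla(\delta'))$ is a linear isomorphism $D_xL \xrightarrow{\sim} T_xM \oplus \mathbbm R$; hence testing the vanishing of $\omega(\delta,\cdot)$ amounts to letting $v' := \sigma(\delta')$ range over $T_xM$ and $\lambda' := f_\nabla(\delta')$ range over $\mathbbm R$ independently. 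Writing $v := \sigma(\delta)$, Proposition \ref{prop:ker_omega_comp} gives
\[
\omega(\delta,\delta') = \big(\omega_1 + d^\nabla\omega_0\big)(v, v') + f_\nabla(\delta)\,\omega_0(v') - \lambda'\,\omega_0(v).
\]

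First I would specialize to $v' = 0$, $\lambda' = 1$ (realized e.g.\ by the identity derivation $\mathbbm I$ at $x$, for which $\sigma(\mathbbm I) = 0$ and $f_\nabla(\mathbbm I) = 1$): the right-hand side collapses to $-\omega_0(v)$, so $\delta \in \ker\omega$ forces $\omega_0(\sigma(\delta)) = 0$, i.e.\ condition i). Conversely, once i) is known the last term above vanishes identically in $\lambda'$ and the formula becomes
\[
\omega(\delta,\delta') = \Big(\iota_{v}\big(\omega_1 + d^\nabla\omega_0\big) + f_\nabla(\delta)\,\omega_0\Big)(v');
\]
since $\sigma$ is surjective, $v'$ sweeps out all of $T_xM$, so this vanishes for every $\delta'$ if and only if the $L_x$-valued covector $\iota_{\sigma(\delta)}(\omega_1 + d^\nabla\omega_0) + f_\nabla(\delta)\,\omega_0$ is zero, which is condition ii). Combining the two implications yields $\delta \in \ker\omega \iff$ i) and ii) hold for $\nabla$.

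Finally, for the clause ``for some, hence any, connection'': the equivalence just established identifies the conjunction of i) and ii) with the condition $\delta \in \ker\omega$, which makes no reference to $\nabla$; hence if i) and ii) hold for one $\nabla$ they hold for all. I do not expect any genuine obstacle here — as the statement already advertises, this is an easy consequence of Proposition \ref{prop:ker_omega_comp}. The only points needing a little care are invoking the connection splitting to ensure that $(\sigma(\delta'), f_\nabla(\delta'))$ covers all of $T_xM \oplus \mathbbm R$, and isolating condition i) first (via the $v'=0$ specialization) before condition ii) assumes its stated contracted form.
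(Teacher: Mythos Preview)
Your argument is correct and is exactly the intended one: the paper does not spell out a proof but merely flags the corollary as ``an easy consequence of Proposition \ref{prop:ker_omega_comp}'', and your write-up is the natural unpacking of that remark. The use of the $\nabla$-splitting to let $(v',\lambda')$ range independently, the specialization to $\delta'=\mathbbm{I}$ to isolate condition i), and the observation that the equivalence with the connection-free condition $\delta\in\ker\omega$ yields the ``some, hence any'' clause are all sound.
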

	
	Given a line bundle $L \to M$, a \emph{symplectic Atiyah form} is an Atiyah $2$-form $\omega\in \OA^2(L)$ such that $\omega$ is \emph{$\dA$-closed}, i.e. $\dA \omega=0$, and \emph{non-degenerate}, i.e.~its flat map, also denoted $\omega \colon DL\to J^1L$, $\delta \mapsto \omega (\delta, -)$ is a vector bundle isomorphism. There is a relationship between symplectic Atiyah forms and Contact Geometry \cite{Vitagliano:djbundles, Vitagliano:holomorphic}. Remember from the Introduction that a \emph{contact structure} on a manifold $M$ is the kernel of a \emph{contact $1$-form}, i.e., in our terminology, a $1$-form $\theta \in \Omega^1 (M, L)$ with values in a line bundle $L \to M$ such that
	\begin{itemize}
	\item $\theta$ is nowhere $0$ (hence $K_\theta := \ker \theta$ is a well-defined hyperplane distribution on $M$), and
	\item the \emph{curvature} $R_\theta \colon \wedge^2 K_\theta \to L$, $(X, Y) \mapsto - \theta ([X, Y])$ is non-degenerate.
	\end{itemize}

	\begin{rem}
	We have two simple remarks on the curvature $R_\theta$. First of all, it can be obviously encoded into a vector bundle map $K_\theta \to \operatorname{Hom}(K_\theta, L)$, its flat map (an isomorphism for a contact form), also denoted $R_\theta$, and in what follows, we will often take this point of view. Second, it is often useful to extend $R_\theta$ to the whole tangent bundle $TM$. This can be done by choosing a connection $\nabla$ on $L$. Indeed, if we do so, then $R_\theta = (d^\nabla \theta) |_{K_\theta}$.
	\end{rem}
	
	\begin{rem}
	In the Contact Geometry literature, it is often assumed that the line bundle $L$ is trivial. Moreover, a trivialization $L \cong \mathbbm R_M$ is fixed once for all, so that the contact form identifies with a global plain $1$-form $\theta \in \Omega^1 (M)$. For conceptual reasons it seems however better to us not to make this simplifying assumption and work in the general case.
	\end{rem}

	The relation between contact forms and symplectic Atiyah forms is given by the following
	\begin{theo}[{\cite[Proposition 3.6]{Vitagliano:djbundles}}]
		\label{prop:corrispondenza}
		The assignment $\theta\mapsto \omega\rightleftharpoons (\theta,0)$ establishes a bijection between contact forms and symplectic Atiyah forms.
	\end{theo}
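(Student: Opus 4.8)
The plan is to reduce the statement to the isomorphism \eqref{eq:components}. That decomposition already exhibits $\theta\mapsto\omega\rightleftharpoons(\theta,0)$ as a bijection of $\Omega^1(M,L)$ onto the set of $\dA$-closed Atiyah $2$-forms on $L$, with inverse $\omega\mapsto\omega_0$, since $\omega\rightleftharpoons(\omega_0,\omega_1)$ is $\dA$-closed precisely when $\omega_1=0$. So the entire content is to check that, under this bijection, $\omega$ is non-degenerate (the flat map $\omega\colon DL\to J^1L$ is an isomorphism) exactly when $\theta=\omega_0$ is a contact form, i.e.\ $\theta$ is nowhere zero and $R_\theta$ is non-degenerate. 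Because $\rank DL=\dim M+1=\rank J^1L$, the flat map $\omega\colon DL\to J^1L$, which covers $\operatorname{id}_M$, is an isomorphism if and only if its fibrewise kernel $\ker\omega_x:=\{\delta\in D_xL:\omega(\delta,-)=0\}$ vanishes for every $x\in M$, so everything comes down to computing $\ker\omega_x$.

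For this I would apply Corollary \ref{coroll:kernelofAtiyahform} with a fixed but arbitrary connection $\nabla$ in $L$. Since $\omega_1=0$, it says $\delta\in\ker\omega_x$ if and only if $v:=\sigma(\delta)\in\ker\theta_x$ and $\iota_v d^\nabla\theta+f_\nabla(\delta)\theta=0$ in $\operatorname{Hom}(T_xM,L_x)$. I will also use the splitting $D_xL\cong T_xM\oplus\operatorname{End}L_x$, $\delta\mapsto(\sigma(\delta),f_\nabla(\delta))$, coming from \eqref{eq:Spencer}, in particular that $f_\nabla$ restricts to the identity on $\operatorname{End}L_x=\ker\sigma_x$, together with the relation $R_\theta=(d^\nabla\theta)|_{K_\theta}$ noted above.

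Assume first that $\theta$ is a contact form and let $\delta\in\ker\omega_x$, $v=\sigma(\delta)\in(K_\theta)_x$. Pairing $\iota_v d^\nabla\theta+f_\nabla(\delta)\theta=0$ against any $Y\in(K_\theta)_x$ and using $\theta_x(Y)=0$ gives $d^\nabla\theta(v,Y)=0$, i.e.\ $R_\theta(v,-)=0$ on $(K_\theta)_x$; non-degeneracy of $R_\theta$ forces $v=0$. With $v=0$ the condition reduces to $f_\nabla(\delta)\theta_x=0$, and since $\theta_x\neq0$ we get $f_\nabla(\delta)=0$; as also $\sigma(\delta)=0$, the splitting yields $\delta=0$, so $\ker\omega_x=0$ and $\omega$ is symplectic. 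Conversely, assume $\omega$ is symplectic. If $\theta_x=0$ for some $x$, then $\delta:=\operatorname{id}_{L_x}\in\operatorname{End}L_x\subset D_xL$ has $\sigma(\delta)=0\in\ker\theta_x$ and $\iota_0 d^\nabla\theta+f_\nabla(\delta)\theta_x=\theta_x=0$, so $0\neq\delta\in\ker\omega_x$, a contradiction; hence $\theta$ is nowhere zero and $(K_\theta)_x=\ker\theta_x$ is a hyperplane. Finally, take $v\in(K_\theta)_x$ with $R_\theta(v,-)=0$: then $\iota_v d^\nabla\theta$ annihilates the hyperplane $(K_\theta)_x$, as does $\theta_x$, and the $L_x$-valued covectors annihilating $(K_\theta)_x$ form a line spanned by the nonzero $\theta_x$, so $\iota_v d^\nabla\theta=-c\,\theta_x$ for some scalar $c$. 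Choosing $\delta\in D_xL$ with $\sigma(\delta)=v$ and $f_\nabla(\delta)=c\operatorname{id}_{L_x}$ (possible by the splitting), Corollary \ref{coroll:kernelofAtiyahform} gives $\delta\in\ker\omega_x$, hence $\delta=0$ and $v=0$, so $R_\theta$ is non-degenerate and $\theta$ is a contact form.

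I expect the only subtle point to be this last step: one must observe that nowhere-vanishing of $\theta$ is exactly what makes $(K_\theta)_x$ a hyperplane, hence its $L_x$-valued annihilator a line, which is what lets one read off the endomorphism component $f_\nabla(\delta)$ from the equation in Corollary \ref{coroll:kernelofAtiyahform}, while the $(\sigma,f_\nabla)$-splitting of $DL$ provides the freedom to prescribe the symbol and the endomorphism part of $\delta$ independently. Everything else — well-definedness, injectivity, surjectivity, and the $\dA$-closedness built into the ansatz $(\theta,0)$ — is immediate from \eqref{eq:components}.
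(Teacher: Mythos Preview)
Your proof is correct. Note, however, that the paper does not actually supply a proof of this theorem: it is stated with a citation to \cite[Proposition 3.6]{Vitagliano:djbundles} and left unproven in the present text. So there is no ``paper's own proof'' to compare against here.

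That said, your argument is exactly the kind of proof one would expect, and it is built entirely from the tools the paper has set up in Section~\ref{sec:atiyah}: the decomposition \eqref{eq:components} handles the bijection with $\dA$-closed Atiyah $2$-forms, and Corollary~\ref{coroll:kernelofAtiyahform} together with the $(\sigma,f_\nabla)$-splitting of $DL$ reduces non-degeneracy of $\omega$ to the two defining conditions of a contact form. Both directions are handled cleanly; the only minor cosmetic remark is that since $L$ is a line bundle you may as well write $f_\nabla(\delta)=c\in\mathbbm{R}$ rather than $c\operatorname{id}_{L_x}$, in keeping with the identification $\operatorname{End}L\cong\mathbbm{R}_M$ used throughout the paper.
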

	
	\begin{rem}\label{rem:hom_sympl}
	We recall here for later purposes that contact structures are also equivalent to \emph{homogeneous symplectic structures} \cite{Grabowski:remarks, Vitagliano:holomorphic}. Let $\mathbbm R^\times$ be the multiplicative group of non-zero reals, let $h$ be a principal action of $\mathbbm R^\times$ on a manifold $P$, let $M = P/\mathbbm R^\times$, and let $L_M \to M$ be the line bundle associated to the tautological representation of $\mathbbm R^\times = \operatorname{GL}(\mathbbm R, 1)$ on $\mathbbm R$. A differential form $\omega$ on $P$ is \emph{homogeneous of degree $k$} if $h_\varepsilon^\ast \omega = \varepsilon^k \omega$ for all $\varepsilon \in \mathbbm R^\times$. Sections of $L_M$ clearly identify with degree $1$ homogeneous functions on $P$. Now let $\theta \in \Omega^1 (M, L_M)$. Using that $\Gamma (L_M)$ embeds into $C^\infty (P)$ as homogeneous functions of degree $1$, we can interpret $\theta$ as a homogeneous $1$-form on $M$ of degree $1$. Denote the latter by $\Theta$. The assignment $\theta \mapsto d\Theta$ establishes a bijection between contact forms on $L_M$ and homogeneous symplectic forms of degree $1$ on $P$. The inverse bijection maps a degree $1$ homogeneous symplectic form $\omega$ to $\iota_{\mathcal E} \omega$, seen as an $L_M$-valued $1$-form on $M$, where $\mathcal E$ is the infinitesimal generator of the principal action $h$ (the fundamental vector field corresponding to the generator $1$ in the Lie algebra $\mathbbm R$ of $\mathbbm R^\times$). The reader may consult \cite{Vitagliano:holomorphic} for the reltionship between degree $1$ homogeneous symplectic forms and symplectic Atiyah forms.
	\end{rem}

\section{Line Bundles over Lie Groupoids}
\label{sec:lb}
From now on we will make extensive use of line bundles over Lie groupoids. We first recall the fundamentals of VB groupoids. Our main references for the latter are \cite{Gracia:VBgroupoids, Mackenzie:generaltheory}.
\subsection{VB Groupoids}
	A \emph{VB groupoid (VBG)} $(E\rightrightarrows E_M; G\rightrightarrows M)$ is a Lie groupoid in the category of vector bundles (or a vector bundle in the category of Lie groupoids), i.e.~a diagram
	\begin{equation*}
		\begin{tikzcd}
			E \arrow[r,shift left=0.5ex] \arrow[r, shift right=0.5 ex] \arrow[d] &E_M \arrow[d] \\
			G \arrow[r,shift left=0.5ex] \arrow[r, shift right=0.5 ex] & M
		\end{tikzcd}
	\end{equation*}
	where $E\rightrightarrows E_M$ and $G\rightrightarrows M$ are Lie groupoids, $E\to G$ and $E_M\to M$ are vector bundles, and all the structure maps of $E\rightrightarrows E_M$ are VB morphisms (see \cite{EspositoTortorellaVitagliano:infinitesimal_automorphism}). Abusing the notation, we denote the structure maps of both groupoids $G\rightrightarrows M$ and $E\rightrightarrows E_M$ in the same way: $s$, $t$, $m$, $i$, and $u$ for the source, target, multiplication, inverse and unit, respectively. We will always regard $E_M$ (resp.~$M$) as a submanifold of $E$ (resp.~$G$) via the unit. The inversion and the multiplication will be sometimes denoted ``$(-)^{-1}$'' and ``$\cdot$'', respectively. Moreover, we denote by $G^{(\bullet)}$ and $E^{(\bullet)}$ the nerves of $G$ and $E$. Notice that $E^{(\bullet)} \to G^{(\bullet)}$ is a \emph{simplicial vector bundle}, i.e.~a simplicial object in the category of vector bundles (or a vector bundle in the category of simplicial manifolds). The projections of $E^{(2)} = E \mathbin{{}_{s}\times_{t}} E$ onto the factors will be denoted $\mathrm{pr}_{1,2}$ (likewise for $G$).
	
	The \emph{core} of a VBG $(E\rightrightarrows E_M; G\rightrightarrows M)$ is $C=\ker(s\colon E \to E_M)|_M$, and the \emph{core-anchor} is the restriction $t|_C\colon C\to E_M$ of the target to the core $C$. Following \cite{DelHoyo:VBmorita}, the core-anchor will be often regarded as a $2$-term cochain complex of vector bundles over $M$ 
	\begin{equation*}
		\begin{tikzcd}
			0 \arrow[r]& C \arrow[r, "t|_C"]& E_M \arrow[r] & 0,
		\end{tikzcd}
	\end{equation*}
	called the \emph{core complex} of the VBG $E\rightrightarrows E_M$. We will assume that the core complex is concentrated in degrees $-1,0$. The value at $x\in M$ of the core complex
	\begin{equation*}
		\begin{tikzcd}
			0 \arrow[r]& C_x \arrow[r]& E_{M,x} \arrow[r]&0
		\end{tikzcd}
	\end{equation*}
	is also called the \emph{fiber of $E$ over $x$} \cite[Section 3]{DelHoyo:VBmorita}.
	
	When the core $C$ of the VBG $E\rightrightarrows E_M$ is trivial, i.e.~$C = 0$, the groupoid $E$ is isomorphic to an action groupoid \cite[Proposition 3.3.5]{DelHoyo:stacks} (see also, e.g., \cite[Example 3.2]{EspositoTortorellaVitagliano:infinitesimal_automorphism}), and, following \cite{DelHoyo:stacks, DelHoyo:VBmorita, EspositoTortorellaVitagliano:infinitesimal_automorphism}, we say that $E$ is a \emph{trivial core VBG}. \color{black}The action of $G\rightrightarrows M$ on $E_M$ is given by
	\begin{equation*}
		G\mathbin{{}_{s}\times} E_M \to E_M, \quad (g,v)\mapsto t(s_g^{-1}(v)),
	\end{equation*}
	and the isomorphism $G\mathbin{{}_{s}\times} E_M \cong E$ is given by
	\begin{equation*}
		G \mathbin{{}_{s}\times} E_M \to E, \quad (g,v)\mapsto s_g^{-1}(v).
	\end{equation*}
Moreover, when the core $C$ is trivial, all simplicial structure maps of the nerve $E^{(\bullet)}$ are regular VB morphisms, and we can apply the functor $D$. In this case $DE^{(\bullet)} \to G^{(\bullet)}$ is the nerve of a VB-groupoid $DE\rightrightarrows DE_M$ over $G\rightrightarrows M$ whose structure maps are $Ds$, $Dt$, $Dm$, $Di$ and $Du$ (up to the canonical VB isomorphism $D(E^{(2)}) \cong DE\mathbin{{}_{Ds}\times_{Dt}} DE$ identifying $\delta \in D(E^{(2)})$ with $(D \mathrm{pr}_1 (\delta), D \mathrm{pr}_2 (\delta))$) \cite{EspositoTortorellaVitagliano:infinitesimal_automorphism}.
	
	\begin{example}[The twisted dual VBG]
	\label{ex:adjoint}
	A VBG $(V\rightrightarrows V_M; G\rightrightarrows M)$ with core $C$ possesses a \emph{dual VBG} $(V^{\ast}\rightrightarrows C^{\ast};G\rightrightarrows M)$ (see \cite[Section 11.2]{Mackenzie:generaltheory}). Similarly, if $(E\rightrightarrows E_M; G\rightrightarrows M)$ is any VBG with trivial core, one can define a VBG $(V^{\dag}\rightrightarrows C^{\dag}; G\rightrightarrows M)$, where $V^{\dag}:=\operatorname{Hom}(V, E)$ and $C^{\dag}:= \operatorname{Hom}(C, E_M)$, as follows: for any $\psi\in V^{\dag}_g$ the source and target are defined by
	\begin{align*}
		\big\langle s(\psi), c \big\rangle = - s\big\langle \psi , 0_g \cdot c^{-1}\big\rangle, \quad \text{and} \quad \big\langle t(\psi), c' \big\rangle = t \big\langle \psi, c' \cdot 0_g \big\rangle, \quad c\in C_{s(g)}, \, c'\in C_{t(g)},
	\end{align*}
	Here and in the following we use $\langle -,-\rangle$ to denote the tautological $E$-valued pairing between $V$ and $V^{\dag}$, i.e.~$\langle -,-\rangle\colon V^{\dag}\times_G V\to E$, $\langle\phi,v\rangle := \phi(v)$, as well as the $E_M$-valued pairing between $C$ and $C^{\dag}$, etc.\\
	The unit of $V^\dag$ maps $\psi\in C^{\dag}_x$ to $\psi \circ \pr_C \in \operatorname{Hom}(V|_M, E_M) \subseteq V^{\dag}$, where the projection $\pr_C\colon V|_M \to C$ is given by the canonical splitting
	\begin{equation*}
		\begin{tikzcd}
			0 \arrow[r] &C \arrow[r] & V|_M \arrow[r, "s"] & V_M \arrow[r] \arrow[l, bend left, "u"]&0.
		\end{tikzcd}
	\end{equation*}
	For any two composable arrows $\psi\in V_g^{\dag}$ and $\psi'\in V_{g'}^{\dag}$ the multiplication is defined by
	\begin{equation*}
		\langle \psi  \psi', v v'\rangle =s_{gg'}^{-1}\left(g'^{-1} . s\langle \psi, v \rangle + s\langle \psi', v'\rangle \right)
	\end{equation*}
	for all composable arrows $v\in V_g$ and $v'\in V_{g'}$, where the dot ``$.$'' indicates the $G$-action on $E_M$. Finally, for any $\psi\in V_g^{\dag}$ the inverse is defined by
	\begin{equation*}
		\langle \psi^{-1}, v\rangle = - s_{g^{-1}}^{-1}(t\langle\psi, v^{-1} \rangle), \quad v\in V_{g^{-1}}.
	\end{equation*}
	We call the VBG $(V^{\dag}\rightrightarrows C^{\dag}; G\rightrightarrows M)$ the \emph{$E$-twisted dual VBG} of $V\rightrightarrows V_M$ (or simply the \emph{twisted dual VBG}, if there is no risk of confusion). Notice that the core of the twisted dual VBG is $V_M^{\dag}= \operatorname{Hom}(V_M, E_M)$, and the induced core complex is
	\begin{equation*}
		\begin{tikzcd}
			0 \arrow[r] & V_M^{\dag}\arrow[r, "t|_C^{\dag}"] & C^{\dag}\arrow[r]&0,
		\end{tikzcd}
	\end{equation*}
	where $t|_C^{\dag}$ is the \emph{twisted transpose map} to $t|_C$, i.e.~$t|_C^{\dag}(\psi)= \psi \circ t|_C$ for all $\psi\in V_M^{\dag}$.
	\end{example}

	A \emph{VBG morphism} $(F,f)$ between the VBGs $(W\rightrightarrows W_N; H\rightrightarrows N)$ and $(V\rightrightarrows V_M; G\rightrightarrows M)$ is a commutative diagram 
	\begin{equation*}
	{\scriptsize
		\begin{tikzcd}
			W \arrow[rr, shift left=0.5ex] \arrow[rr, shift right=0.5ex] \arrow[dd] \arrow[dr, "F"] & &W_N \arrow[dd] \arrow[dr, "F"] \\
			&  V \arrow[rr, shift left= 0.5ex, crossing over] \arrow[rr, shift right =0.5ex, crossing over] & &V_M \arrow[dd]\\
			H \arrow[rr, shift left=0.5ex] \arrow[rr, shift right=0.5ex] \arrow[dr, "f"] & & N \arrow[dr, "f"] \\ 
			&  G \arrow[from=uu, crossing over]\arrow[rr, shift left= 0.5ex] \arrow[rr, shift right =0.5ex] & &M
		\end{tikzcd}}
	\end{equation*}
	where $F$ and $f$ are Lie groupoid morphisms and $(F,f)$ are VB morphisms. In the sequel, sometimes, we only write $F$ for the VBG morphism $(F, f)$.
	
	The following Proposition is straightforward.
	
	\begin{prop}
		If $F\colon (W\rightrightarrows W_M)\to (V\rightrightarrows V_M)$ is a VBG morphism over the identity $\operatorname{id}\colon (G\rightrightarrows M)\to (G\rightrightarrows M)$ and $(E\rightrightarrows E_M; G\rightrightarrows M)$ is a VBG with trivial core, then the twisted transpose map $F^{\dag}\colon (V^{\dag}\rightrightarrows C^{\dag})\to (W^{\dag}\rightrightarrows D^{\dag})$ given by $F^{\dag}(\psi)=\psi \circ F$, for all $\psi\in V^{\dag}$, is a VBG morphism (over the identity), where $C$ and $D$ are the cores of $V$ and $W$, respectively.
	\end{prop}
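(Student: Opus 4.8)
The plan is to verify directly that $F^{\dag}$, given by precomposition with $F$, is compatible with all the structure maps of the adjoint VB groupoids, using the explicit formulas recorded in Example \ref{ex:adjoint}. First I would check that $F^{\dag}$ is well defined and a VB morphism: since $F$ covers $\operatorname{id}_G$, for $\psi\in V^{\dag}_g=\operatorname{Hom}(V_g,E_g)$ the composition $\psi\circ F_g$ lies in $\operatorname{Hom}(W_g,E_g)=W^{\dag}_g$, so $F^{\dag}\colon V^{\dag}\to W^{\dag}$ is a VB morphism over $\operatorname{id}_G$; likewise, as $F$ being a Lie groupoid morphism covering the identity sends the core $D=\ker(s\colon W\to W_M)|_M$ into the core $C=\ker(s\colon V\to V_M)|_M$, precomposition with $F|_D$ defines a VB morphism $C^{\dag}\to D^{\dag}$ over $\operatorname{id}_M$, which will be the base component of $F^{\dag}$. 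Both are fiberwise linear because precomposition with a fixed linear map is linear.

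Next I would check that $F^{\dag}$ intertwines the source, target, multiplication and inverse maps. In each case the scheme is the same: unwind the defining formula of the relevant structure map of $W^{\dag}$, move $F$ inside the $E$-valued pairing (legitimate since $F^{\dag}(\psi)=\psi\circ F$), and then use that $F$ is a Lie groupoid morphism — so it commutes with multiplication, inversion and units, and sends zero vectors to zero vectors — to recognize the outcome as the defining formula of the corresponding structure map of $V^{\dag}$ evaluated on the $F$-images of the test vectors, which is precisely $F^{\dag}$ of the corresponding structure map of $V^{\dag}$. For instance, for the source one computes, for $\psi\in V^{\dag}_g$ and $d$ in the core $D_{s(g)}$ of $W$,
\[
\big\langle s(F^{\dag}\psi),d\big\rangle=-s\big\langle\psi,F(0_g\cdot d^{-1})\big\rangle=-s\big\langle\psi,0_g\cdot F(d)^{-1}\big\rangle=\big\langle s(\psi),F(d)\big\rangle=\big\langle F^{\dag}(s\psi),d\big\rangle ,
\]
using $F(0_g)=0_g$ together with the fact that $F$ preserves multiplication and inversion; the target, multiplication and inverse cases are entirely analogous.

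The only point requiring slightly more than bookkeeping is compatibility with the unit maps, $F^{\dag}\circ u_{V^{\dag}}=u_{W^{\dag}}\circ F^{\dag}$ on $C^{\dag}$, because the units of the adjoint VBGs are built from the canonical splitting $\pr_C\colon V|_M\to C$. For this I would first show that $F$ respects these splittings, i.e.\ $\pr_C\circ F|_M=F|_D\circ \pr_D$, by writing $\pr_C=\operatorname{id}-u\circ s$ and invoking that $F|_M$ intertwines the unit and source maps of $W$ and $V$ and is fiberwise linear over $M$. Given this, $F^{\dag}(u_{V^{\dag}}\psi)=(\psi\circ\pr_C)\circ F|_M=\psi\circ(F|_D\circ\pr_D)=(\psi\circ F|_D)\circ\pr_D=u_{W^{\dag}}(F^{\dag}\psi)$ for $\psi\in C^{\dag}$, which completes the verification. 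I do not expect any genuine obstacle here: the whole argument is a chain of routine identities, and the only care needed is to keep track of which of the two $E$-valued pairings and which of the two cores one is using, and that all maps involved cover the identities on $G$ and $M$.
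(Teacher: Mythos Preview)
Your proposal is correct and is precisely the kind of direct verification the paper has in mind: the paper itself does not supply a proof, merely declaring the proposition ``straightforward'' immediately before stating it. Your checks of source, target, multiplication, inverse, and unit compatibility via the formulas of Example~\ref{ex:adjoint}, together with the observation that $F$ respects the canonical core splittings because $\pr_C=\operatorname{id}-u\circ s$, are exactly the routine computations required.
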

	
	Any VBG morphism $(F,f)\colon (W\rightrightarrows W_N; H\rightrightarrows N)\to (V\rightrightarrows V_M; G\rightrightarrows M)$ induces a cochain map between the core complexes
	\begin{equation}\label{eq:fiber_VBGM}
		\begin{tikzcd}
			0 \arrow[r] &D \arrow[r] \arrow[d, "F"'] &W_N \arrow[r] \arrow[d, "F"]&0 \\
			0 \arrow[r]& C \arrow[r] &V_M \arrow[r] &0
		\end{tikzcd},
	\end{equation}
	where $C$ and $D$ are the cores of $V$ and $W$ respectively. Both components $F \colon D \to C$ and $F\colon W_N \to V_M$ are VB morphisms covering $f \colon N \to M$, and we will often consider the restriction 
	\begin{equation}\label{eq:fiber_x_VBGM}
		\begin{tikzcd}
			0 \arrow[r] &D_x \arrow[r] \arrow[d, "F_x"'] &W_{N, x} \arrow[r] \arrow[d, "F_x"]&0 \\
			0 \arrow[r]& C_{f(x)} \arrow[r] &V_{M, f(x)} \arrow[r] &0
		\end{tikzcd},
	\end{equation}
	of \eqref{eq:fiber_VBGM} to the fibers over $x \in N$ and $f(x) \in M$. 
	
	Following~\cite{DelHoyo:VBmorita} we now discuss the fundamentals of the Morita theory of VBGs.
	
	\begin{definition}[{\cite[Section 3]{DelHoyo:VBmorita}}]
		A VBG morphism $(F,f)$ is a \emph{VB Morita map} if $F$ is a Morita map (it then follows that $f$ is also a Morita map, see Theorem \ref{theo:caratterizzazioneVBmorita} below). Two VBGs $V$ and $W$ are \emph{Morita equivalent} if there exists a VBG $E$ and two VB Morita maps $V\leftarrow E\to W$.  We call \emph{VB stack} a Morita equivalence class of VBGs, and we denote by $[V_M/V]\to [M/G]$ the VB stack represented by the VBG $(V\rightrightarrows V_M; G\rightrightarrows M)$.
	\end{definition}
	
	As proved in~\cite[Theorem 3.5]{DelHoyo:VBmorita} there exists a very useful characterization of VB Morita maps that we will often use in the subsequent sections and we recall below.
	
	\begin{theo}[\textbf{Del Hoyo-Ortiz}  {\cite[Theorem 3.5]{DelHoyo:VBmorita}}]
		\label{theo:caratterizzazioneVBmorita}
		Let $(F, f)\colon (W\rightrightarrows W_N; H\rightrightarrows N)\to (V\rightrightarrows V_M; G\rightrightarrows M)$ be a VBG morphism and let $C$ and $D$ be the cores of $V$ and $W$ respectively. The following conditions are equivalent:
		\begin{enumerate}
			\item[\emph{i)}] $(F, f)$ is a VB Morita map;
			\item[\emph{ii)}] $f$ is a Morita map and, for all $x \in N$, the cochain map \eqref{eq:fiber_x_VBGM} between the fibers over $x\in N$ and $f(x) \in M$ is a quasi-isomorphism.
		\end{enumerate}
	\end{theo}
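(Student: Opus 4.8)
The plan is to fall back on the classical characterization of Morita maps of Lie groupoids as those morphisms that are simultaneously \emph{essentially surjective} (ES) and \emph{fully faithful} (FF), and then to disentangle how each of these two conditions splits, along the linear structure, into a ``base part'' and a ``fiberwise part''. Recall that a Lie groupoid morphism $\phi\colon\mathcal H\to\mathcal G$ over $\phi_0\colon N\to M$ is FF precisely when the commutative square formed by $(s,t)\colon\mathcal H\to N\times N$ and $(s,t)\colon\mathcal G\to M\times M$ is cartesian (equivalently, the induced morphism $\mathcal H\to\phi_0^{\ast}\mathcal G:=\mathcal G\times_{M\times M}(N\times N)$ is an isomorphism), and ES precisely when $\mathcal G\mathbin{{}_{s}\times_{\phi_0}}N\to M$, $(g,n)\mapsto t(g)$, is a surjective submersion; $\phi$ is Morita iff it is both. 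I would apply this with $\phi=F$ (the VBG morphism regarded as a Lie groupoid morphism, over its own object map, which I still write $F\colon W_N\to V_M$) and with $\phi=f$. I will freely use the two structural facts about a VBG $V\rightrightarrows V_M$ with core $C$: for every arrow $g$ of $G$, the maps $s\colon V_g\to V_{M,s(g)}$ and $t\colon V_g\to V_{M,t(g)}$ are surjective, their kernels are canonically isomorphic to $C_{t(g)}$ and $C_{s(g)}$ respectively, and $t(\ker(s|_{V_g}))=\im(t|_C)_{t(g)}$, $s(\ker(t|_{V_g}))=\im(t|_C)_{s(g)}$.

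\emph{The FF condition.} I would first show that ``$F$ is FF'' is equivalent to ``$f$ is FF and, for every arrow $h$ of $H$, the natural linear map $\rho_h\colon W_h\to\{(\xi,w_1,w_2):\xi\in V_{f(h)},\ s\xi=Fw_1,\ t\xi=Fw_2\}$, $w\mapsto(Fw,sw,tw)$, is bijective''. Indeed, solving the pullback description of FF for $F$ against a triple $(\xi,w_1,w_2)$ over the $G$-arrow $g$ underlying $\xi$ factors as first producing the unique arrow of $H$ over $g$ with the prescribed source and target in $N$ — this is FF of $f$, which one sees is genuinely forced by plugging the zero sections into the cartesian square for $F$ — and then solving the remaining purely linear problem in the fiber $W_h$, which is bijectivity of $\rho_h$. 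Assuming $f$ FF, a direct computation with the structural facts above identifies the core complex of $F^{\ast}V$ with $\bigl[\,(f^{\ast}C)\times_{V_M}W_N\to W_N\,\bigr]$ (the fiber product being over the core-anchor $t|_C$ and $F$), under which the cochain map \eqref{eq:fiber_x_VBGM} becomes $(c,w)\mapsto c$ in degree $-1$ and $w\mapsto Fw$ in degree $0$, and bijectivity of all the $\rho_h$ becomes equivalent to \eqref{eq:fiber_x_VBGM} inducing an isomorphism on $H^{-1}$ and a monomorphism on $H^0$. So the FF condition contributes exactly ``$f$ FF'' together with ``$H^{-1}$-iso and $H^0$-mono of \eqref{eq:fiber_x_VBGM}''.

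\emph{The ES condition.} I would then show that, once $f$ is known to be FF, ``$F$ is ES'' is equivalent to ``$f$ is ES and \eqref{eq:fiber_x_VBGM} is $H^0$-epi at every point''. For the key point, given $v\in V_{M,y}$, ES of $f$ yields an arrow $g\colon f(n)\to y$ of $G$, and a short linear-algebra identity — using surjectivity of $s|_{V_g}$ and $t|_{V_g}$ together with the image formulas above — shows that $\{\,t(\xi):\xi\in V_g,\ s(\xi)\in F(W_{N,n})\,\}=V_{M,y}$ if and only if $F(W_{N,n})+\im(t|_C)_{f(n)}=V_{M,f(n)}$, which is precisely $H^0$-surjectivity of \eqref{eq:fiber_x_VBGM} at $x=n$; the submersivity of the ES map for $F$ is then inherited from that of the ES map for $f$, via the elementary fact that if $P\to Q$ is surjective and $P\to R$ is a surjective submersion factoring through $Q$ then $Q\to R$ is a surjective submersion, plus fiberwise linearity. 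For the converse, projecting $V\mathbin{{}_{s}\times_{F}}W_N\to V_M$ along $V\to G$, $W_N\to N$, $V_M\to M$ recovers ES of $f$, and feeding ES of $F$ an arbitrary $v\in V_{M,f(n)}$ and translating a witnessing $V$-arrow along the unique $H$-preimage of its underlying $G$-arrow writes $v$ as $F(\,\text{something in }W_{N,n}\,)+t(\ker(s|_{V_g}))$, i.e.\ establishes $H^0$-surjectivity.

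Combining the two halves: ``$F$ FF'' already delivers ``$f$ FF'', and then ``$F$ FF and $F$ ES'' is equivalent to ``$f$ FF, $f$ ES, and \eqref{eq:fiber_x_VBGM} is $H^{-1}$-iso, $H^0$-mono and $H^0$-epi'', i.e.\ to ``$f$ Morita and \eqref{eq:fiber_x_VBGM} is a quasi-isomorphism at every point'', which is the assertion. The main obstacle is the FF step: promoting the pointwise cohomological hypothesis to bijectivity of $\rho_h$ for \emph{every} arrow $h$, smoothly in $h$, requires the careful bookkeeping of the source and target maps of $V$ restricted to a single arrow alluded to above — identifying their kernels and the images of those kernels with core data — together with a smoothness check. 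By comparison, the ES step and the extraction of the base conditions in the converse direction are short diagram chases.
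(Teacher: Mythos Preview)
The paper does not prove this theorem; it is stated with attribution to Del Hoyo--Ortiz \cite[Theorem 3.5]{DelHoyo:VBmorita} and used as a black box throughout. There is therefore no proof in the paper to compare your proposal against.

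That said, your strategy --- decomposing Morita into fully faithful plus essentially surjective and then splitting each into a base-groupoid part and a fiberwise cohomological part --- is essentially the one Del Hoyo and Ortiz themselves use in the cited reference. Your sketch is on the right track, though a few points would need tightening in a full write-up: the identification of the core complex of the pullback VBG $F^\ast V$ and the equivalence between bijectivity of all $\rho_h$ and the $H^{-1}$-iso/$H^0$-mono condition deserve a more explicit argument (you only check this at units, not general arrows, in what you wrote), and the smoothness check you flag as ``the main obstacle'' is indeed where the work lies. The ES half is handled more cleanly. If you want to reconstruct the full proof, consulting the original Del Hoyo--Ortiz paper directly would be the most efficient route.
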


	The next two statements are easy consequences of Theorem \ref{theo:caratterizzazioneVBmorita} and will be useful in the sequel. To the best of our knowledge they didn't appear before anywhere.
	
	\begin{coroll}
		Let $(F,f)\colon (W\rightrightarrows W_N; H \rightrightarrows N)\to (V\rightrightarrows V_M; G\rightrightarrows M)$ be a VBG morphism between trivial core VBGs. The following conditions are equivalent
		\begin{itemize}
			\item[\emph{i)}] $(F,f)$ is a VB Morita map;
			\item[\emph{ii)}] $f \colon H \to G$ is Morita and $F\colon (W\to H)\to (V\to G)$ is a regular VB morphism;
			\item[\emph{iii)}] $f \colon H \to G$ is Morita and $F\colon (W_N\to N)\to (V_M\to M)$ is a regular VB morphism.
		\end{itemize}
	\end{coroll}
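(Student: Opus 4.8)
The plan is to derive everything from the Del Hoyo--Ortiz characterization, Theorem~\ref{theo:caratterizzazioneVBmorita}, using that trivial core collapses the fiber complexes to a single degree. Since $V$ and $W$ have trivial cores, $C=D=0$, so the cochain map on fibers~\eqref{eq:fiber_x_VBGM} is concentrated in degree $0$: over $x\in N$ it is just the linear map $F_x\colon W_{N,x}\to V_{M,f(x)}$. A cochain map between complexes supported in a single degree is a quasi-isomorphism exactly when it is an isomorphism. Hence condition (ii) of Theorem~\ref{theo:caratterizzazioneVBmorita} becomes: $f$ is Morita and $F_x\colon W_{N,x}\to V_{M,f(x)}$ is an isomorphism for every $x\in N$; by definition this says precisely that $F\colon W_N\to V_M$ is a regular VB morphism. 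This gives the equivalence of (i) and (iii).

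It then remains to prove (ii)$\Leftrightarrow$(iii). Here I would use the structure of trivial core VBGs recalled above: for a VBG with trivial core the source $s\colon E\to E_M$ is a \emph{regular} VB morphism covering $s\colon G\to M$, since under the identification $E\cong G\mathbin{{}_{s}\times}E_M$ it is the canonical bundle map to $E_M$, whose fiberwise inverse is exactly $g\mapsto s_g^{-1}$. Apply this to both $W$ and $V$. Because $F$ is a Lie groupoid morphism it intertwines the sources, $s\circ F=F\circ s$, so for every $h\in H$ we get a commutative square of linear maps with horizontal arrows $F_h\colon W_h\to V_{f(h)}$ and $F_{s(h)}\colon W_{N,s(h)}\to V_{M,f(s(h))}$ (using $s\circ f=f\circ s$) and with invertible vertical arrows $s_h$ and $s_{f(h)}$. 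Thus $F_h$ is an isomorphism if and only if $F_{s(h)}$ is. Since $s\colon H\to N$ is a surjective submersion, every $x\in N$ arises as $s(h)$, so $F\colon W\to V$ is regular iff $F\colon W_N\to V_M$ is regular. Adjoining the common hypothesis ``$f$ is Morita'' to both statements yields (ii)$\Leftrightarrow$(iii), completing the cycle.

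I do not expect a genuine obstacle: the argument is an unwinding of Theorem~\ref{theo:caratterizzazioneVBmorita} together with the action-groupoid description of trivial core VBGs. The only points needing a line of care are (a) checking that the fibers~\eqref{eq:fiber_x_VBGM} really degenerate to degree~$0$ and that there ``quasi-isomorphism'' means ``isomorphism'', and (b) invoking regularity of the source map of a trivial core VBG to transfer regularity of $F$ between the total groupoids and the bases; both are immediate from material already in the excerpt.
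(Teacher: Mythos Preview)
Your proposal is correct and is exactly the argument the paper has in mind: the corollary is stated without proof as an ``easy consequence of Theorem~\ref{theo:caratterizzazioneVBmorita}'', and your unwinding---collapsing the fiber complexes to degree~$0$ when the cores vanish, and then transferring regularity along the (fiberwise invertible) source maps of trivial-core VBGs---is precisely that easy consequence.
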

	
	Now, let $(W\rightrightarrows W_N; H\rightrightarrows N), (V\rightrightarrows V_M; G\rightrightarrows M)$ be Morita equivalent VBGs and let $(E'\rightrightarrows E'_N; H\rightrightarrows N), (E\rightrightarrows E_M; G\rightrightarrows M)$ be Morita equivalent trivial core VBGs. We want to show that the twisted dual VBGs (see Example \ref{ex:adjoint}) $W^\dag = \operatorname{Hom}(W, E')$ and $V^\dag = \operatorname{Hom}(V, E)$ are Morita equivalent as well. It is enough to consider the case when $W, V$ and $E', E$ are related by a VB Morita map. This is discussed in the following

	\begin{prop}
		\label{prop:VBMoritaadjoint}
		Let $W,V$, $E', E$ be as above, and let $(F,f)\colon (W\rightrightarrows W_N; H \rightrightarrows N)\to (V\rightrightarrows V_M; G\rightrightarrows M)$ and $(f_E,f)\colon (E'\rightrightarrows E'_N; H \rightrightarrows N)\to (E\rightrightarrows E_M; G\rightrightarrows M)$ be VB Morita maps. Then the twisted dual VBGs $W^{\dag}, V^\dag$ are Morita equivalent.
	\end{prop}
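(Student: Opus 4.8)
The plan is to produce an explicit VBG morphism relating $W^\dag$ and $V^\dag$ and then verify it is a VB Morita map using the del Hoyo--Ortiz criterion (Theorem \ref{theo:caratterizzazioneVBmorita}). Since $(F,f)$ and $(f_E,f)$ cover the same Morita map $f\colon H\to G$, the naive guess ``pull back $\psi\mapsto f_E\circ\psi\circ F$'' does not typecheck because $F$ goes the wrong way ($W\to V$) to precompose with an element of $V^\dag$. There are two ways around this. First option: when $F$ is \emph{regular} as a VB morphism over $f$ (e.g. both cores trivial, by the Corollary following Theorem \ref{theo:caratterizzazioneVBmorita}), one can define $G^\dag\colon V^\dag\to W^\dag$ by $G^\dag(\psi)=f_E\circ\psi\circ(F\circ\iota)$ where $\iota$ identifies the fibers of $W$ and $V$ appropriately; but in general $F$ need not be fiberwise invertible. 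Second, and cleaner: I would reduce to the two extreme cases and compose. Namely, it suffices (as the authors do in the discussion preceding the Proposition) to treat a single VB Morita map; I would further split this into (i) the case $E'=f^\ast E$ with $f_E$ the tautological map and $F$ arbitrary VB Morita, and (ii) the case $H=G$, $f=\operatorname{id}$, where $F$ and $f_E$ are VB Morita maps over the identity. For (ii) the adjoint is functorial: the Proposition stated just before (with $F^\dag(\psi)=\psi\circ F$) already gives a VBG morphism $V^\dag\to W^\dag$, so I only need the Morita claim; for (i) one builds the comparison VBG over $H$ as a fibered product.

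Concretely, I would construct the \textbf{bridging VBG}: set $\widetilde V:=f^{!}V$, the pullback VBG of $V$ along $f$ (the fibered product $H\times_{G}V$ along source/target), with core $\widetilde C=f^\ast C$, and similarly $\widetilde E:=f^{!}E$. The canonical projection $\widetilde V\to V$ is a VB Morita map (pullback along a Morita map), and since the original $F\colon W\to V$ factors through $\widetilde V$ via a VBG morphism $\widehat F\colon W\to\widetilde V$ over $\operatorname{id}_H$, the $2$-out-of-$3$ property for Morita maps (which holds because Morita maps are a class closed under composition and left-cancellation up to the relevant equivalence) forces $\widehat F$ to be a VB Morita map over $\operatorname{id}_H$; same for $\widehat f_E\colon E'\to\widetilde E$. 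Now I apply the already-stated adjoint-functoriality Proposition twice: $\widehat F$ and $\widehat f_E$ over the identity induce a VBG morphism $\widetilde V^{\dag}\to W^{\dag}$ (here $\widetilde V^\dag=\operatorname{Hom}(\widetilde V,\widetilde E)$, $W^\dag=\operatorname{Hom}(W,E')$), and the compatible pullbacks induce $V^\dag\dashrightarrow\widetilde V^\dag$; the composite exhibits $W^\dag$ and $V^\dag$ as connected by VB Morita maps.

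The key computation is then the \textbf{quasi-isomorphism on fibers}. By Theorem \ref{theo:caratterizzazioneVBmorita}(ii) I must check that for each $x$ the induced cochain map between the fibers of the adjoint core complexes is a quasi-isomorphism. Recall from Example \ref{ex:adjoint} that the core complex of $V^\dag=\operatorname{Hom}(V,E)$ is $\operatorname{Hom}(V_M,E_M)\xrightarrow{t|_C^\dag}\operatorname{Hom}(C,E_M)$, i.e. $\operatorname{Hom}(-,E_{M,x})$ applied to the core complex $C_x\xrightarrow{t|_C}V_{M,x}$ of $V$. Since $\operatorname{Hom}(-,E_{M,x})$ is exact on the category of vector spaces (every object is projective), it sends the quasi-isomorphism $(F_x)\colon (D_x\to W_{N,x})\to(C_{f(x)}\to V_{M,f(x)})$ — which we know is a quasi-isomorphism because $(F,f)$ is VB Morita — to a quasi-isomorphism of the dualized complexes, \emph{provided} the coefficient modules $E_{M,x}$ and $E'_{N,x}$ are identified compatibly, which is exactly what $(f_E,f)$ being VB Morita guarantees (again by Theorem \ref{theo:caratterizzazioneVBmorita}, and here the cores are trivial so $f_E$ is fiberwise an isomorphism $E'_{N,x}\cong E_{M,f(x)}$). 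Combining the exactness of $\operatorname{Hom}$ in the contravariant slot with the fiberwise isomorphism in the covariant slot gives the desired quasi-isomorphism, and the base map $f$ is Morita by hypothesis, so Theorem \ref{theo:caratterizzazioneVBmorita} applies.

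The \textbf{main obstacle} I anticipate is purely bookkeeping rather than conceptual: getting the variance and the pairing conventions of Example \ref{ex:adjoint} to line up so that the maps I write down are genuine VBG morphisms (i.e. intertwine source, target, multiplication, unit, inverse as spelled out there), and in particular handling the general (non-trivial-core) case of $V,W$ where one cannot simply invoke the Corollary for trivial-core VBGs. I expect the cleanest route is precisely the reduction to the two extreme cases above so that non-trivial cores only ever appear on the $V,W$ side with maps over the identity (where adjoint-functoriality is already recorded), while all base changes happen on the trivial-core $E$ side; the only thing left to check carefully is that the factorizations $\widehat F,\widehat f_E$ exist and are VBG morphisms, which is a direct diagram chase using the universal property of the pullback VBG.
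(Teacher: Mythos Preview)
Your proposal is correct and arrives at essentially the same construction as the paper, though by a more circuitous route. The paper works directly with the pullback VBG $f^\ast V^\dag$ over $H$: one leg of the zigzag is the canonical projection $f^\ast V^\dag \to V^\dag$ (VB Morita by \cite[Corollary~3.7]{DelHoyo:VBmorita}), and the other is the explicit map $F^\dag\colon f^\ast V^\dag \to W^\dag$ defined in one line by $\langle F^\dag(\psi,h), w\rangle := f_{E,h}^{-1}\langle \psi, F(w)\rangle$; the fiber map is then visibly the $E_M$-adjoint of \eqref{eq:fiber_x_VBGM}, hence a quasi-isomorphism. Your bridging object $\widetilde V^\dag = \operatorname{Hom}(f^!V, f^!E)$ is literally $f^\ast V^\dag$ (since $\operatorname{Hom}(f^\ast V, f^\ast E) \cong f^\ast\operatorname{Hom}(V,E)$), and your map $\widetilde V^\dag \to W^\dag$ built from $\widehat F$ and $\widehat f_E$ coincides with the paper's $F^\dag$; so the detour through pulling back $V$ and $E$, the factorizations $\widehat F,\widehat f_E$, and the appeal to $2$-out-of-$3$ are all unnecessary scaffolding. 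Two small points: your dashed arrow should read $\widetilde V^\dag \to V^\dag$ (the projection), not $V^\dag \dashrightarrow \widetilde V^\dag$; and you never need $2$-out-of-$3$, since the fiber of $\widehat F$ over $x$ is identical to that of $F$ and the del~Hoyo--Ortiz criterion applies directly.
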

	
	\begin{proof}
	Denote by $D, C$ the cores of $W, V$ (while the cores of $E', E$ are trivial). Consider the pull-back VBGs $(f^\ast V^\dag \rightrightarrows f^\ast C^\dag; H \rightrightarrows N)$, whose core is $f^\ast V^\dag_M$. As $f \colon H \to G$ is a Morita map, from \cite[Corollary 3.7]{DelHoyo:VBmorita} we have that the canonical VBG morphism $f \colon f^{\ast}V^\dag \to V^\dag$ is a VB Morita map. Now, consider the map $F^{\dag}\colon f^{\ast}V^{\dag}\to W^{\dag}$ defined as follows: for any $(\psi,h)\in f^{\ast}V^{\dag}$, $h\in H$, $F^{\dag}(\psi)\in W^{\dag}_{h}$ is given by $\langle F^{\dag}(\psi), w\rangle := f_{E, h}^{-1}\langle\psi, F(w)\rangle$, $w\in W_{h}$. It is straightforward to check that $F^{\dag}$ is a VBG morphism covering the identity of $H$ and inducing the following cochain map on fibers: for all $x \in N$,
	
	\begin{equation}\label{eq:fiber_x_VBGM^dag}
		\begin{tikzcd}
			0 \arrow[r] &V^\dag_{M, f(x)} \arrow[r] \arrow[d, "F^\dag_x"'] &C^\dag_{f(x)} \arrow[r] \arrow[d, "F^\dag_x"]&0 \\
			0 \arrow[r]& W^\dag_{M, x} \arrow[r] & D^\dag_x \arrow[r] &0
		\end{tikzcd},
	\end{equation}
	where ($W_M^\dag := \operatorname{Hom}(W_M, E')$, $D^\dag := \operatorname{Hom}(D, E')$ and) all the arrows are the twisted transpose maps of the corresponding arrows in \eqref{eq:fiber_x_VBGM}. As \eqref{eq:fiber_x_VBGM} is a quasi-isomorphism from Theorem \ref{theo:caratterizzazioneVBmorita}, then \eqref{eq:fiber_x_VBGM^dag} is a quasi-isomorphism as well. It follows that $F^\dag$ is a VB Morita map fitting in the diagram of VB Morita maps
		\begin{equation*}
			\begin{tikzcd}
				& f^{\ast}V^{\dag} \arrow[dl, "F^\dag"'] \arrow[dr, "f"] \\
				W^{\dag} & & V^{\dag}
			\end{tikzcd},
		\end{equation*}
		whence the claim.
	\end{proof}
	
We conclude this section discussing natural isomorphisms in the VBG setting (see \cite[Section 6.1]{DelHoyo:VBmorita} for the special case of VBG morphisms covering the identity). Let $(F,f), (F',f')\colon (W\rightrightarrows W_N; H\rightrightarrows N)\to (V\rightrightarrows V_M; G\rightrightarrows M)$ be VBG morphisms. 
	\begin{definition}\label{def:LNT}
		A \emph{linear natural isomorphism} $(T, \tau) \colon (F,f) \Rightarrow (F',f')$ from $(F,f)$ to $(F',f)$ is a VB morphism $(T, \tau) \colon (W_N \to N) \to (V \to G)$ such that $T \colon F \Rightarrow F'$ is a natural isomorphism (whence $\tau \colon f \Rightarrow f'$ is a natural isomorphism as well). 
	\end{definition}

	\begin{theo}
		\label{theo:VBtransformation}
		Let $F, F'\colon W\to V$ be VBG morphisms covering the same map $f$ and let $(T, f) \colon F \Rightarrow F'$ be a linear natural isomorphism covering $f \colon N \to M \subseteq G$. For any $x\in N$, the map $h_T \colon W_{N,x} \to C_{f(x)}$, defined by $h_T(w)=T(w)- F(w)$, is a well-defined homotopy between the cochain maps on the fibers over $x, f(x)$ induced by $F, F'$:
			\begin{equation*}
				\begin{tikzcd}
					0 \arrow[r] &D_x \arrow[d, "F", shift left=0.5ex] \arrow[d, "F'"', shift right=0.5ex]\arrow[r]& W_{N,x} \arrow[d, "F", shift left=0.5ex] \arrow[d,"F'"', shift right=0.5ex] \arrow[r] \arrow[dl, "h_T"'] &0\\
					0 \arrow[r] &C_{f(x)}\arrow[r] &V_{M,{f(x)}} \arrow[r]&0
				\end{tikzcd}.
			\end{equation*}
		The assignment $(T, f) \mapsto h_T$ establishes a bijection between linear natural isomorphisms $(T, f) \colon F \Rightarrow F'$ and smooth homotopies between the cochain maps $F, F'$ from the core complex $0 \to D \to W_N \to 0$ of $W$ to the pull-back $0 \to f^\ast C \to f^\ast V_M \to 0$ of the core complex of $V$.
	\end{theo}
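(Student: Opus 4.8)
The plan is to check, in order, that $h_{T}$ is a well-defined smooth homotopy of cochain maps, that $(T,f)\mapsto h_{T}$ is injective, and that it is surjective. I would first reduce to the case $f=\operatorname{id}$: pulling $V$ back along the Lie groupoid morphism $f\colon H\to G$ yields a VBG $f^{\ast}V\rightrightarrows f^{\ast}V_{M}$ over $H\rightrightarrows N$ whose core is $f^{\ast}C$ and whose core complex is the pullback of that of $V$; the morphisms $F,F'$ factor through canonical VBG morphisms $\tilde F,\tilde F'\colon W\to f^{\ast}V$ covering $\operatorname{id}_{H}$, these induce on fibers the same cochain maps as $F,F'$, and $(T,f)\mapsto (T,\operatorname{id})$ identifies linear natural isomorphisms $F\Rightarrow F'$ with linear natural isomorphisms $\tilde F\Rightarrow\tilde F'$ covering $\operatorname{id}_{H}$, compatibly with the passage $T\mapsto h_{T}$. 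So from now on $f=\operatorname{id}$: $W$ and $V$ sit over the same $H\rightrightarrows N$, the maps $F,F',T$ cover identities, $T(w)$ lies over the unit $u_{H}(x)$ for $w\in W_{N,x}$, and I write $u$ and ``$\cdot$'' for the unit and multiplication of $V$ and $0_{g}$ for the zero of the vector bundle $V\to H$ over $g\in H$.

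\emph{Step 1: $h_{T}$ is a homotopy.} Since $T$ is a natural isomorphism $F\Rightarrow F'$, for $w\in W_{N,x}$ one has $s(T(w))=F(w)$ and $t(T(w))=F'(w)$. By linearity of $s$, $s(h_{T}(w))=s(T(w))-s(u(F(w)))=F(w)-F(w)=0$, so $h_{T}(w)=T(w)-u(F(w))$ lies in the core $C_{x}=\ker(s)|_{x}$; moreover $h_{T}\colon W_{N}\to C$ is a VB morphism because $T$ and $u\circ F$ are. Applying $t$ and using $t\circ u=\operatorname{id}$ and linearity gives $(t|_{C})(h_{T}(w))=t(T(w))-F(w)=F'(w)-F(w)$, the degree-$0$ homotopy relation. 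For the degree-$(-1)$ relation let $d$ lie in the core $D_{x}$ of $W$, so $s(d)=0$ and $T(s(d))=0=u(0)$; naturality of $T$ at $d$, the unit law, and $s(F'(d))=0$ give $T(t(d))\cdot F(d)=F'(d)\cdot u(0)=F'(d)$. On the other hand $F(d)$ is a core element of $V$ with $(t|_{C})(F(d))=F(t(d))=s(T(t(d)))$, so the elementary VBG identity
\[
a\cdot b=a+b-u(s(a)),
\]
valid whenever $a$ lies over a unit, $b$ is a core element over the same unit, and $(t|_{C})(b)=s(a)$ — obtained from bilinearity of the multiplication applied to $(a,b)=(u(s(a)),b)+(a-u(s(a)),0)$ together with the unit laws — gives $T(t(d))\cdot F(d)=T(t(d))+F(d)-u(F(t(d)))=h_{T}(t(d))+F(d)$. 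Comparing, $F'(d)-F(d)=h_{T}(t(d))$, the degree-$(-1)$ relation.

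\emph{Step 2: bijectivity.} From $h_{T}(w)=T(w)-u(F(w))$ we recover $T(w)=h_{T}(w)+u(F(w))$, so $T$ is determined by $h_{T}$ and the assignment is injective. For surjectivity, given a smooth homotopy $h\colon W_{N}\to C$ — a VB morphism with $(t|_{C})\circ h=F'|_{W_{N}}-F|_{W_{N}}$ and $h\circ(t|_{D})=F'|_{D}-F|_{D}$ — set $T(w):=h(w)+u(F(w))$, a VB morphism $W_{N}\to V$ over $\operatorname{id}_{N}$. Then $s(T(w))=F(w)$ (as $h(w)$ is a core element), $t(T(w))=(t|_{C})(h(w))+F(w)=F'(w)$ (by the degree-$0$ relation), and $h_{T}=h$ by construction; it remains to verify naturality $T(t\omega)\cdot F(\omega)=F'(\omega)\cdot T(s\omega)$ for $\omega\in W$. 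Writing $T(t\omega)=u(F(t\omega))+h(t\omega)$ and $T(s\omega)=u(F'(s\omega))+h'(s\omega)$ with $h'(w):=h(w)-u\big((t|_{C})h(w)\big)\in\ker(t)$, and expanding both products by bilinearity of the multiplication — after checking that the relevant pairs are composable — naturality reduces to the identity of VB morphisms $W\to V$
\[
F'(\omega)-F(\omega)=h(t\omega)\cdot 0_{g}-0_{g}\cdot h'(s\omega)\qquad(\omega\in W_{g}).
\]
On the subbundle $\ker(s)\subseteq W$ this holds because each such $\omega\in W_{g}$ equals $d\cdot 0^{W}_{g}$ for a unique core element $d$ of $W$ (here $0^{W}_{g}$ is the zero of $W\to H$ over $g$), whence $F(\omega)=F(d)\cdot 0_{g}$, $F'(\omega)=F'(d)\cdot 0_{g}$, $h'(s\omega)=h'(0)=0$, and the identity becomes $(F'(d)-F(d))\cdot 0_{g}=h(t(d))\cdot 0_{g}$, i.e.\ the degree-$(-1)$ relation; a general arrow is reduced to this case by expressing it, locally, as a sum of such an element of $\ker(s)$ and a local section coming from a splitting of $s\colon W\to s^{\ast}W_{N}$, and using the multiplicativity of both sides of the displayed identity together with the degree-$0$ relation.

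The delicate point is the last one: establishing the displayed naturality identity for arbitrary arrows requires careful bookkeeping of composabilities when invoking bilinearity of the multiplication, and tracing the resulting identities back to the two homotopy relations — this is the step I expect to be the main obstacle. (For VBG morphisms covering $\operatorname{id}$, the correspondence between natural transformations and homotopies of cochain maps is essentially contained in \cite[Section~6.1]{DelHoyo:VBmorita}; the new ingredients here are the \emph{linear} refinement — immediate from $T=h_{T}+u\circ F$ — and the reduction to an arbitrary base map $f$ via pullback.)
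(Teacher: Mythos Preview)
Your proof follows essentially the same approach as the paper's, but with considerably more detail. The paper only verifies that $h_T$ lands in the core and the degree $-1$ homotopy identity---using $T(t(d)) = F'(d)\cdot F(d)^{-1}$ and the manipulation $F'(d)F(d^{-1}) - F(d)F(d^{-1}) = (F'(d)-F(d))\cdot 0^V$ rather than your additive identity $a\cdot b = a+b-u(s(a))$, though the two computations are equivalent---and then declares the degree $0$ relation and the bijection ``straightforward and left to the reader''; you go further by supplying the degree $0$ relation, the reduction to $f=\operatorname{id}$ via pullback, injectivity, and the setup for surjectivity, and the naturality verification you honestly flag as ``the main obstacle'' is exactly the part the paper omits entirely.
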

	\begin{proof}
		Let $(T, f)$ be a linear natural isomorphism as in the statement. For every $w \in W_M$, $T(w)$ is an arrow in $V$ from $F(w)$ to $F'(w)$, $T(w)\colon F(w) \to F'(w)$. Moreover, the naturality says that $F'(\omega) T(w) = T(w') F(\omega)$, for every arrow $\omega \colon w \to w'$ in $W$. The map $h_T$ in the statement is well-defined, indeed, for any $w \in W_N$, $s (h_T (w)) = s (F(w)) - s(T(w)) = F(w) - F(w) = 0$, i.e.~$h_T(w) \in C$. For the homotopy condition $h_T\circ T = F' - F$ in degree $-1$, take $d \in D_x$, i.e.~$d \colon 0_x^{W_N} \to t(d)$, hence, from both the linearity and the naturality, $T(t(d)) = F'(d) F(d^{-1})$, and compute
		\[
		\begin{aligned}
		h_T\big(t(d)\big) & = T\big(t (d)\big) - F\big(t(d)\big) = T\big(t(d)\big) - t\big(F(d)\big) = F'(d)  F(d^{-1}) - F(d)F(d^{-1}) \\
		&= \big(F'(d) - F(d)\big) \cdot \big(F(d^{-1}) - F(d^{-1})\big) = \big(F'(d) - F(d)\big) \cdot 0^{V}_{f(x)} = (F'-F)(d),
		\end{aligned}
		\]
		where we used that in a VBG both the multiplication and the unit are VB morphisms. The rest, including the homotopy condition $t \circ h_T = F' - F$ in degree $0$, is straightforward and we leave the details to the reader.
	\end{proof}

	Clearly, given a linear natural isomorphism between two VBG morphisms $F,F'$ covering the same map, then $F$ is a VB Morita map if and only if so is $F'$.
	
	\begin{rem}
	There is a version of Theorem \ref{theo:VBtransformation} for VBG morphisms covering different maps whose statement and proof need the technology of representations up to homotopy \cite{AC:RUTHs}, but we will not need this level of generality in the present paper.
	\end{rem}

	\subsection{LB Groupoids}
	From now on we will be mainly interested in VBGs which are line bundles.
	\begin{definition}
		A \emph{line bundle groupoid} (\emph{LBG}) is a VBG $(L\rightrightarrows L_M; G\rightrightarrows M)$ where $L\to G$ and $L_M\to M$ are line bundles. An \emph{LBG morphism} is a VBG morphism $(F,f)\colon (L'\rightrightarrows L'_{N}; H\rightrightarrows N)\to (L\rightrightarrows L_M, G\rightrightarrows M)$ between LBGs such that $F\colon L' \to L$ and $F\colon L'_N \to L_M$ are LB morphisms (i.e.~VB morphisms between line bundles with the additional property of being isomorphisms on fibers). 
	\end{definition}
	Notice that, for any $x\in M$, the restriction of the source map $s_x\colon L_x \to L_{M,x}$ is a surjective linear map between $1$-dimensional vector spaces, so its kernel is trivial. Hence the core of an LBG $(L\rightrightarrows L_M; G \rightrightarrows M)$ is automatically trivial, and all the simplicial structure maps of the nerve $L^{(\bullet)}$ are LB morphisms (i.e.~they are isomorphisms on fibers, and we can take, e.g., pull-back of sections along them). This is an easy consequence of the fact that, as recalled in the previous subsection, $G$ acts on $L_M$ and $L \rightrightarrows L_M$ is isomorphic to the action groupoid $G \ltimes L_M \rightrightarrows L_M$. We stress that we prefer not to take the action groupoid point of view, despite the fact that we will occasionally use the $G$-action on $L_M$. 
	
	Another advantage of the core of $L$ being trivial is that it makes sense to consider the VBG $(DL \rightrightarrows DL_M; G \rightrightarrows M)$. The core $C$ of $DL$ is canonically isomorphic to the Lie algebroid $A$ of $G$. Namely, from Equation \eqref{eq:DFcommutes}, the restriction of the symbol map $\sigma\colon DL \to TG$ to the core $C$ is a VB isomorphism $\sigma \colon C\to A$. The injectivity follows from the fact that a derivation of $L$ is completely determined by its symbol and its action on the pullback sections $s^{\ast}\lambda_M$, with $\lambda\in \Gamma(L_M)$. The surjectivity then follows from dimensional reasons. From Remark \ref{rem:Atiyahderivation} the inverse isomorphism $A \to C$ maps  $a= \tfrac{d}{d\varepsilon}|_{\varepsilon=0} \, g(\varepsilon)\colon x\to x(\varepsilon) \in A_x$ to $\tfrac{d}{d\varepsilon}|_{\varepsilon=0}  \, s_{g(\varepsilon)}^{-1} \circ s_x \colon L_x\to L_{g(\varepsilon)} \in C_x$, where $g(\varepsilon)$ is a curve in $s^{-1}(x)$ starting from $x$. In the following, we will always understand the isomorphism $C \cong A$ and say that $A$ is the core of the VBG $DL \rightrightarrows DL_M$. Accordingly, we indicate the core-anchor by $\mathcal{D}\colon A\to DL_M$.

	\begin{lemma}
		The core-anchor $\mathcal{D}\colon A\to DL_M$ agrees with the infinitesimal action of $A$ on $L_M$ corresponding to the $G$-action.
	\end{lemma}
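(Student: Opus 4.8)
The plan is to unwind both sides of the asserted identity as velocities of curves of line bundle isomorphisms, using Remark \ref{rem:Atiyahderivation} together with the explicit description of the identification $C \cong A$ for the core of $DL \rightrightarrows DL_M$ recalled just above the statement.

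First, fix $x \in M$ and $a \in A_x$, and pick a curve $g(\varepsilon)$ in $s^{-1}(x)$ with $g(0) = x$ (the unit at $x$) and $\tfrac{d}{d\varepsilon}|_{\varepsilon=0} g(\varepsilon) = a$. On the ``action side'', recall that the $G$-action on $L_M$ assigns to an arrow $g$ the LB isomorphism $\phi_g := t_g \circ s_g^{-1} \colon L_{M,s(g)} \to L_{M,t(g)}$ (here $s_g$, $t_g$ denote the restrictions to the fibre over $g$ of $s, t \colon L \to L_M$, which are invertible since the core of $L$ is trivial), so that $\phi_{gh} = \phi_g\phi_h$ and $\phi_{1_x} = \operatorname{id}$; the associated infinitesimal $A$-action on $L_M$ is obtained by differentiation and, in the sense of Remark \ref{rem:Atiyahderivation}, sends $a$ to $\tfrac{d}{d\varepsilon}|_{\varepsilon=0}\phi_{g(\varepsilon)} \in D_x L_M$, the velocity of the curve of isomorphisms $\phi_{g(\varepsilon)}\colon L_{M,x} \to L_{M,t(g(\varepsilon))}$ starting at the identity.

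On the ``core-anchor side'', recall that $a$ corresponds to the core element $\delta_a = \tfrac{d}{d\varepsilon}|_{\varepsilon=0}(s_{g(\varepsilon)}^{-1}\circ s_x) \in C_x$ and that $\mathcal D$ is the restriction to the core of $Dt \colon DL \to DL_M$, so that $\mathcal D(a) = Dt(\delta_a)$. Applying the curve formula for the functor $D$ from Remark \ref{rem:Atiyahderivation} to the regular LB morphism $t \colon L \to L_M$ gives $\mathcal D(a) = \tfrac{d}{d\varepsilon}|_{\varepsilon=0}(t_{g(\varepsilon)} \circ s_{g(\varepsilon)}^{-1} \circ s_x \circ t_x^{-1})$. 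Since $x$ is a unit, both $s_x$ and $t_x$ coincide with the canonical identification $L_x = L_{M,x}$, hence $s_x \circ t_x^{-1} = \operatorname{id}$ and the expression collapses to $\tfrac{d}{d\varepsilon}|_{\varepsilon=0}(t_{g(\varepsilon)}\circ s_{g(\varepsilon)}^{-1}) = \tfrac{d}{d\varepsilon}|_{\varepsilon=0}\phi_{g(\varepsilon)}$, which is exactly the value at $a$ of the infinitesimal action computed above. Comparing the two sides finishes the proof; independence of the choice of $g(\varepsilon)$ is automatic, since both quantities are intrinsically defined.

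The only delicate point is convention bookkeeping: the direction of $A$ (I take $A = \ker ds|_M$ with anchor $dt$, matching the choice $g(\varepsilon)$ in $s^{-1}(x)$), the domains and codomains of $s_g$, $t_g$, and the systematic use of $L_x = L_{M,x}$ over units. I expect this to be the main — and rather mild — obstacle; once the notation is set up, the statement is a direct consequence of the two formulas in Remark \ref{rem:Atiyahderivation}.
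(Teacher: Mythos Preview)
Your proof is correct and follows essentially the same approach as the paper's own proof: both compute $\mathcal D(a)$ as $Dt$ applied to the core element $\tfrac{d}{d\varepsilon}|_{\varepsilon=0}(s_{g(\varepsilon)}^{-1}\circ s_x)$, invoke the curve formula from Remark~\ref{rem:Atiyahderivation} to obtain $\tfrac{d}{d\varepsilon}|_{\varepsilon=0}(t_{g(\varepsilon)}\circ s_{g(\varepsilon)}^{-1}\circ s_x\circ t_x^{-1})$, and then use that source and target agree on units to cancel $s_x\circ t_x^{-1}$ and identify the result with the infinitesimal action. Your exposition is somewhat more detailed on the bookkeeping, but the argument is the same.
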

	\begin{proof}
		The action of $G$ on $L_M$ can be seen as a Lie groupoid morphism $G\to \operatorname{GL}(L_M)$. Explicitly, for any $g\colon x \to y$ in $G$, we have the linear map $t \circ s_g^{-1}\colon L_{M,x} \to L_{M,y}$.
		Applying the Lie functor, we get the infinitesimal action $\mathcal D^A\colon A\to DL_M$. Now if $a=\tfrac{d}{d\varepsilon} |_{\varepsilon=0} \, g(\varepsilon)$, with $g(\varepsilon)\colon x\to x(\varepsilon)$ a curve in $s^{-1}(x)$ starting from $x \in M$, then 
		\begin{equation*}
			\mathcal{D}_a=Dt\left(\frac{d}{d\varepsilon}|_{\varepsilon=0} \, s_{g(\varepsilon)}^{-1}\circ s_x\right)= \frac{d}{d\varepsilon}|_{\varepsilon=0} \, t_{g(\varepsilon)}\circ s_{g(\varepsilon)}^{-1} \circ s_x \circ t_x^{-1}= \frac{d}{d\varepsilon} |_{\varepsilon=0} \, t_{g(\varepsilon)}\circ s_{g(\varepsilon)}^{-1} = \mathcal D^A_a,
		\end{equation*}
		where, in the second-last step, we used that source and target agree on units.
	\end{proof}

	Given a connection $\nabla$ on $L_M$ we will consider the linear form $F_\nabla=f_{\nabla}\circ \mathcal{D}\colon A \to \mathbbm R_M$, where $f_\nabla \colon DL_M \to \mathbbm R_M$ is the right splitting of \eqref{eq:Spencer_L} corresponding to $\nabla$. The following diagram summarizes the situation.
	\begin{equation}
		\label{eq:fnabla}
		\begin{tikzcd}
			& & A \arrow[d, "\mathcal{D}"] \arrow[dl, " F_\nabla"']\\
			0 \arrow[r] & \operatorname{End}(L_M)\arrow[r] & DL_M \arrow[r, "\sigma"] \arrow[l, bend left, "f_{\nabla}"] & TM \arrow[r] \arrow[l, "\nabla", bend left] & 0 
		\end{tikzcd}.
	\end{equation}
	
	\begin{rem}
	Applying the twisted dual VBG construction to the VBG $DL\rightrightarrows DL_M$ and the LBG $L\rightrightarrows L_M$ we obtain a VBG structure on the $1$-jet bundle: $J^1L \rightrightarrows A^{\dag}$.
	\end{rem}

	We now discuss the $L$-twisted dual and the gauge groupoids of Morita equivalent VBGs.
		
	\begin{prop}
		Let $V\rightrightarrows V_M$ and $W \rightrightarrows W_M$ be VBGs over the same Lie groupoid $G \rightrightarrows M$, let $C$ and $D$ be their cores and let $L\rightrightarrows L_M$ be an LBG over $G$. A VBG morphism $F\colon (W\rightrightarrows W_M) \to (V\rightrightarrows V_M)$ (over the identity) is a VB Morita map if and only if so is the twisted transpose map $F^{\dag}\colon (V^{\dag}=\operatorname{Hom}(V,L)\rightrightarrows C^{\dag}=\operatorname{Hom}(C, L_M)) \to (W^{\dag}=\operatorname{Hom}(W, L)\rightrightarrows D^{\dag}=\operatorname{Hom}(D, L_M))$.
	\end{prop}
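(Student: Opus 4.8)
The plan is to reduce the statement to the del Hoyo--Ortiz characterization of VB Morita maps (Theorem~\ref{theo:caratterizzazioneVBmorita}) together with one elementary fact of homological algebra, namely that dualizing into a line reflects quasi-isomorphisms. Both $F$ and $F^\dag$ cover the identity $\operatorname{id}\colon G\to G$, which is a Morita map; hence, by Theorem~\ref{theo:caratterizzazioneVBmorita}, $F$ (resp.\ $F^\dag$) is a VB Morita map if and only if, for every $x\in M$, the cochain map it induces on the fibers over $x$ is a quasi-isomorphism. So everything comes down to comparing the fibers and the induced cochain maps of $F$ and of $F^\dag$.

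First I would record, from the adjoint VBG construction recalled in Example~\ref{ex:adjoint}, that the core of $V^\dag=\operatorname{Hom}(V,L)$ is $V_M^\dag=\operatorname{Hom}(V_M,L_M)$ and that the core complex is $0\to V_M^\dag\xrightarrow{(t|_C)^\dag} C^\dag\to 0$, where $(t|_C)^\dag$ is the $L_M$-adjoint of the core-anchor $t|_C\colon C\to V_M$ of $V$ (and similarly for $W^\dag$). Thus the fiber of $V^\dag$ over $x$ is $0\to \operatorname{Hom}(V_{M,x},L_{M,x})\to \operatorname{Hom}(C_x,L_{M,x})\to 0$, which is nothing but $\operatorname{Hom}(-,L_{M,x})$ applied to the fiber complex $0\to C_x\to V_{M,x}\to 0$ of $V$ and reindexed back into degrees $-1,0$. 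Since $F^\dag(\psi)=\psi\circ F$, the cochain map induced by $F^\dag$ on the fibers over $x$ is precisely $\operatorname{Hom}(-,L_{M,x})$ (with the arrows reversed) applied to the cochain map~\eqref{eq:fiber_x_VBGM} induced by $F$ on the fibers over $x$.

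Now I would invoke the homological fact: as $L_{M,x}$ is a one-dimensional real vector space, $\operatorname{Hom}_{\mathbbm R}(-,L_{M,x})$ is exact on bounded complexes of finite-dimensional real vector spaces and dualizes cohomology, $H^k\big(\operatorname{Hom}(X^\bullet,L_{M,x})\big)\cong \operatorname{Hom}\big(H^{-k}(X^\bullet),L_{M,x}\big)$; since a linear map of finite-dimensional vector spaces is an isomorphism if and only if its $L_{M,x}$-dual is, the cochain map induced by $F$ over $x$ is a quasi-isomorphism if and only if the one induced by $F^\dag$ over $x$ is. Letting $x$ range over $M$ and applying Theorem~\ref{theo:caratterizzazioneVBmorita} once more, to $F$ and then to $F^\dag$, yields the claimed equivalence. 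The only real obstacle is bookkeeping: carefully matching the abstract description of the adjoint core complex in Example~\ref{ex:adjoint}, and of the cochain map $F^\dag$ induces on fibers, with the naive ``$\operatorname{Hom}(-,L_{M,x})$ of the fiber complex of $V$'', keeping track of the degree shift and of the reversal of arrows so that the exactness/duality argument applies verbatim. (One could also derive the ``only if'' direction as the special case $E=L$, $f=\operatorname{id}$, $f_E=\operatorname{id}_L$ of Proposition~\ref{prop:VBMoritaadjoint}, but the fiberwise computation above delivers both implications at once.)
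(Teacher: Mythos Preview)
Your proposal is correct and follows essentially the same approach as the paper: reduce via Theorem~\ref{theo:caratterizzazioneVBmorita} to the fiberwise statement, identify the cochain map induced by $F^\dag$ on fibers as the $L_{M,x}$-adjoint of the one induced by $F$, and conclude by the fact that dualizing into a line preserves and reflects quasi-isomorphisms. The paper's own proof is simply a compressed version of yours: it states that the fiberwise map for $F^\dag$ is the adjoint of \eqref{eq:fiber_x_VBGM} and then invokes $F^{\dag\dag}=F$ to obtain both implications at once, whereas you spell out the underlying homological algebra directly.
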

	
	\begin{proof}
	Let $x \in M$ and let \eqref{eq:fiber_x_VBGM} be the map induced by $F$ on the fibers over $x$. Then the cochain map induced by $F^\dag$ on fibers is the twisted transpose of \eqref{eq:fiber_x_VBGM}. The claim now follows from $F^{\dag\dag} = F$.
	\end{proof}
	
	The next statement is analogous to \cite[Corollary 3.8]{DelHoyo:VBmorita}.
	
	\begin{prop}
		\label{prop:DFMorita}
		Let $(F,f)\colon (L'\rightrightarrows L'_N;H\rightrightarrows N)\to (L\rightrightarrows L_M; G\rightrightarrows M)$ be an LBG morphism. Then the VBG morphism $(DF,f)\colon (DL'\rightrightarrows DL'_N, H\rightrightarrows N)\to (DL\rightrightarrows DL_M, G\rightrightarrows M)$ is a VB Morita map if and only if $f\colon (H\rightrightarrows N)\to (G\rightrightarrows M)$ is a Morita map.
	\end{prop}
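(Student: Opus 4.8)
The plan is to apply the del Hoyo--Ortiz characterization of VB Morita maps (Theorem~\ref{theo:caratterizzazioneVBmorita}) and to reduce the statement to the corresponding one for the tangent functor. One direction is immediate: a VB Morita map covers a Morita map by definition, so if $(DF,f)$ is a VB Morita map then $f$ is a Morita map.

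For the converse I would assume that $f$ is a Morita map and verify, for every $x\in N$, that the cochain map induced by $DF$ on the fibres of the core complexes is a quasi-isomorphism; this suffices by Theorem~\ref{theo:caratterizzazioneVBmorita}. Recall that the core complex of $DL\rightrightarrows DL_M$ is $A\xrightarrow{\,\mathcal D\,}DL_M$, with $A$ the Lie algebroid of $G$ and $\mathcal D$ the core-anchor, which by the Lemma above is the infinitesimal $A$-action on $L_M$; likewise for $DL'$, with $A'$ the Lie algebroid of $H$. From \eqref{eq:DFcommutes} and the identification of the core of $DL$ with $A$, the degree $-1$ component of the induced cochain map is the Lie algebroid morphism $\operatorname{Lie}(f)\colon A'\to A$, and its degree $0$ component is $DF$ itself.

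The key step is to feed in the Spencer short exact sequence $0\to\mathbbm R_M\to DL_M\xrightarrow{\,\sigma\,}TM\to 0$. Together with the identity $\sigma\circ\mathcal D=\rho$ (the anchor of $A$), which is immediate from the description of $\mathcal D$ as an infinitesimal action, it exhibits the core complex of $DL$ inside a short exact sequence of two-term complexes of vector bundles over $M$,
\begin{equation*}
	0 \longrightarrow \big(0\to\mathbbm R_M\big) \longrightarrow \big(A\xrightarrow{\,\mathcal D\,}DL_M\big) \longrightarrow \big(A\xrightarrow{\,\rho\,}TM\big) \longrightarrow 0,
\end{equation*}
whose quotient term is exactly the core complex of the tangent VBG $TG\rightrightarrows TM$. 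There is a parallel sequence over $N$, and $DF$ induces a morphism of short exact sequences covering $f$: on the sub-complexes it is fibre-wise the identity of $\operatorname{End}(L'_N)=\mathbbm R_N$ (because $F$ is a regular VB morphism, $DF$ sends a scalar endomorphism to the same scalar), on the middle complexes it is the cochain map under study, and on the quotient complexes it is the cochain map on core complexes induced by $(Tf,f)\colon TH\to TG$.

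To conclude I would fix $x\in N$, pass to the fibres over $x$ and $f(x)$, and apply the five lemma to the long exact cohomology sequences of these short exact sequences of complexes. The sub-complex map is an isomorphism; the quotient-complex map is a quasi-isomorphism because $(Tf,f)$ is a VB Morita map whenever $f$ is Morita --- the statement for the tangent functor analogous to the present one, cf.~\cite[Corollary~3.8]{DelHoyo:VBmorita} --- so it induces quasi-isomorphisms on fibres by Theorem~\ref{theo:caratterizzazioneVBmorita}. Hence the middle map is a quasi-isomorphism on fibres, for every $x$, and Theorem~\ref{theo:caratterizzazioneVBmorita} then gives that $(DF,f)$ is a VB Morita map. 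The step that requires real care is setting up this morphism of short exact sequences of complexes and checking that the outer maps are as claimed; everything else is a diagram chase. (Alternatively one could skip the reduction and verify the fibre-wise quasi-isomorphism directly, using that Morita maps induce isomorphisms of isotropy Lie algebras and of normal spaces to the orbits, but the argument above is shorter.)
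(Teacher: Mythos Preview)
Your proposal is correct and matches the paper's own proof essentially step for step: the paper also invokes Theorem~\ref{theo:caratterizzazioneVBmorita}, sets up the short exact sequence of two-term complexes coming from the Spencer sequence \eqref{eq:Spencer_L} (displayed there as one large diagram), observes that the outer maps are quasi-isomorphisms (the left trivially, the right by \cite[Corollary~3.8]{DelHoyo:VBmorita}), and concludes. Your write-up makes the five-lemma step more explicit than the paper does, but the argument is the same.
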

	
	\begin{proof}
	If $(DF, f)$ is VB Morita, then $f$ is Morita. For the converse, suppose that $f$ is a Morita map and let $x \in M$. The map induced by $DF$ between the fibers of $DL'$ and $DL$ over $x$ and $f(x)$ is
	\begin{equation}\label{eq:DF_fiber}
	\begin{tikzcd}
			0 \arrow[r] &A_{H,x} \arrow[r, "\mathcal D_H"] \arrow[d, "D F"'] & D_x L'_N \arrow[r] \arrow[d, "DF"]&0 \\
			0 \arrow[r]& A_{G,f(x)} \arrow[r, "\mathcal D_G", swap] &D_{f(x)} L_M \arrow[r] &0
		\end{tikzcd},
	\end{equation}
	where $A_H,A_G$ are the Lie algebroids of $H, G$. But \eqref{eq:DF_fiber} fits in the following short exact sequence
	\begin{equation}
		{\scriptsize
		\begin{tikzcd}[column sep={{{{2.5em,between origins}}}},
			row sep=1ex]
			&&&&&&&0\arrow[ddr]&&&&0\arrow[ddr]&&&&&&\\
			&\phantom{x}&&&&&&&&&&&&&&&&\\
			0\arrow[rrrr]&&&&0\arrow[ddr]\arrow[rrrr]\arrow[ddddd]&&&&\smash[b]{A_{H,x}}\arrow[ddr, "\mathcal{D}_H"]\arrow[rrrr, equal]\arrow[ddddd, "DF"' near end, swap]&&&&\smash[b]{A_{H,x}}\arrow[ddr, "\rho_H"]\arrow[rrrr]\arrow[ddddd, "df" near end]&&&&0&\\
			&\phantom{x}&&\phantom{x}&\phantom{x}&\phantom{x}&&&&&&&&&&&&\\
			&0\arrow[rrrr, crossing over]&&&&\mathbbm{R}\arrow[rrrr, crossing over]&&&&\smash[b]{D_xL_N'}\arrow[ddr]\arrow[rrrr, crossing over, "\sigma"]&&&&\smash[b]{T_xN}\arrow[ddr]\arrow[rrrr]&&&&0\\
			&&&&&&&0\arrow[ddr]&&&&0\arrow[ddr]&&&&&&\\
			&&&&&&\phantom{x}&&&&0&&&&0&&&\\
			0\arrow[rrrr]&&&&0\arrow[ddr]\arrow[rrrr]&&&&\smash[b]{A_{G,f(x)}}\arrow[ddr, "\mathcal{D}_G", swap]\arrow[rrrr, equal]&&&&\smash[b]{A_{G,f(x)}}\arrow[ddr, "\rho_G", swap]\arrow[rrrr]&&&&0&\\
			&&\phantom{x}&&&&&&&&&&&&&&&\\
			&0\arrow[rrrr]&&&&\mathbbm{R}\arrow[rrrr]\arrow[from=uuuuu, crossing over, equal]&&&&\smash[b]{D_{f(x)}L_M}\arrow[ddr]\arrow[rrrr, "\sigma", swap]\arrow[from=uuuuu, crossing over, "DF"', swap]&&&&\smash[b]{T_{f(x)}M}\arrow[ddr]\arrow[rrrr]\arrow[from=uuuuu, crossing over, "df"]&&&&0\\
			&&&&\phantom{x}&&&&&&&&&&&&&\\
			&&&&&&&&&&0&\phantom{x}&&&0&&&\\
		\end{tikzcd}}.
	\end{equation}
	Since $f$ is Morita, the vertical arrows in (both the leftmost and) the rightmost square form a quasi-isomorphism \cite[Corollary 3.8]{DelHoyo:VBmorita}. It follows that \eqref{eq:DF_fiber} is a quasi-isomorphism as well, hence $(DF, f)$ is VB Morita.
	\end{proof}
	
	We conclude this section with a short discussion on VB Morita equivalent LBGs.
	
	\begin{lemma}
	Let $(L\rightrightarrows L_M; G \rightrightarrows M)$ be an LBG, let $(V \rightrightarrows V_N; H \rightrightarrows N)$ be a VBG and let $(F, f) \colon V \to L$ be a VB Morita map. Then there exists an LBG $L'\to H$ and a VB Morita map $L' \to V$.
	\end{lemma}
	
	\begin{proof}
	Denote by $C \to N$ the core of $V$, and let $x \in N$. Consider the cochain map induced by $F$ on the fibers:
	\[
	\begin{tikzcd}
		0 \arrow[r] &C_x \arrow[r, "t|_C"] \arrow[d] & V_{N,x} \arrow[r] \arrow[d, "F"]&0 \\
		0 \arrow[r]& 0 \arrow[r] &L_{M,f(x)} \arrow[r] &0
	\end{tikzcd}.
	\]
	As $F$ is a VB Morita map, the latter is a quasi-isomorphism. Hence, the core-anchor $t|_C \colon C \to V_N$ is an injective vector bundle map whose cokernel is a line bundle. In particular, $V$ is a regular VBG, i.e. the core-anchor has constant rank (see \cite[Definition 6.3]{Gracia:VBgroupoids}). Therefore, from \cite[Lemma 6.10]{Gracia:VBgroupoids} it is isomorphic to the direct sum of an LBG $L' \to H$ and a VBG $W \to H$ whose core-anchor is bijective. The embedding $L' \to V$ is clearly a VB Morita map.
	\end{proof}
	
	\begin{coroll}\label{cor:VB_Mor_equiv_LBGs}
	Let $L_1, L_2$ be VB Morita equivalent LBGs. Then the VB Morita equivalence can be realized via an LBG, i.e. there exist an LBG $L'$ togheter with VB Morita maps
	\[
		\begin{tikzcd}
			& L' \arrow[dl] \arrow[dr] \\
			L_1 & & L_2
		\end{tikzcd}.
	\]
	\end{coroll}
	
	\begin{definition}
	An \emph{LB stack} is the VB stack represented by an LBG. 
	\end{definition}

\section{$0$-Shifted Contact Structures}\label{sec:0-shifted_cs}

In this section we introduce \emph{$0$-shifted contact structures}. This is not completely straightforward as we have to take care of all the aspects (A1)--(A5) in the dual definition of a contact structure listed in the Introduction in such a way to get a Lie groupoid compatible and Morita invariant definition. This effort is a good warm up for the next section on $+1$-shifted contact structures.
	
\subsection{$0$-Shifted Symplectic Structures}

We begin recalling the definition of $0$-shifted symplectic structures on a Lie groupoid (see, e.g., \cite[Example 2.18]{Cueca:shiftedstructures}, see also \cite[Section 5.5]{Hoffman:stacky}). Let $G\rightrightarrows M$ be a Lie groupoid, let $A$ be the Lie algebroid of $G$, and let $\rho \colon A \to TM$ be the anchor. The simplicial structure of the nerve $G^{(\bullet)}$ determines a cochain complex
	\begin{equation*}
		\begin{tikzcd}
			0 \arrow[r] & \Omega^{\bullet}(M) \arrow[r, "\partial"] &\Omega^{\bullet}(G)\arrow[r, "\partial"] &\Omega^{\bullet}(G^{(2)})\arrow[r,"\partial"] &\cdots
		\end{tikzcd}
	\end{equation*}
	where the differential $\partial$ is the alternating sum of the pullbacks along the face maps. For a differential form $\omega\in \Omega^\bullet(M)$,  the cocycle condition $\partial \omega=0$ means that $s^{\ast}\omega=t^{\ast}\omega$, and for a $2$-form it follows that $\im \rho \subseteq \ker \omega$. The latter inclusion can be rephrased by saying that, for all $x \in M$, the flat map $\omega \colon T_x M \to T^\ast_x M$ gives rise to a ($-1$-shifted) cochain map between the fibers of the tangent and the cotangent groupoids of $G$, i.e.~the following diagram commutes:
\begin{equation}\label{eq:cm_0-shifted}
			\begin{tikzcd}
				0 \arrow[r] &A_x \arrow[r, "\rho"] \arrow[d] & T_xM \arrow[r] \arrow[d, "\omega"] & 0 \arrow[r] \arrow[d] & 0 \\
				0 \arrow[r] & 0 \arrow[r] & T_x^{\ast}M \arrow[r, "\rho^{\ast}"'] & A_x^{\ast} \arrow[r] &0
			\end{tikzcd},
		\end{equation}
	where $\rho^\ast\colon T^\ast M \to A^\ast$ is the transpose of $\rho$. This slightly more sophisticated point of view does actually generalize to higher shifted symplectic structures \cite{Cueca:shiftedstructures, Getzler:shifted} and it is also of inspiration for the case of $+1$-shifted contact structures.
	
	\begin{definition}
		\label{def:0-symplectic}
		A \emph{$0$-shifted symplectic structure} on the Lie groupoid $G$ is a $2$-form $\omega\in \Omega^2(M)$ such that $\partial \omega=0$, $d\omega=0$ and the cochain map \eqref{eq:cm_0-shifted} is a quasi-isomorphism for all $x \in M$, in other words 1) $\rho$ is injective and 2) $\im \rho = \ker \omega$ (or, equivalently, $\omega$ induces an isomorphism $T M / \im \rho \cong \ker \rho^\ast$ in the \emph{transverse direction}).
	\end{definition}
	
	\begin{rem}
	The notion of $0$-shifted symplectic structure is Morita invariant in the following sense. Let $f \colon H \to G$ be a Morita map between Lie groupoids $H \rightrightarrows N$ and $G\rightrightarrows M$. In particular $f$ induces a cochain map $f^\ast \colon (\Omega^\filleddiamond (G^{(\bullet)}), \partial) \to (\Omega^\filleddiamond (H^{(\bullet)}), \partial)$, actually a quasi-isomorphism \cite[Corollary 3]{B2004}. Even more, the assignment $\omega \mapsto f^\ast \omega$ establishes a bijection between $0$-shifted symplectic structures on $G$ and $0$-shifted symplectic structures on $H$. Indeed, from the quasi-isomorphism property, $f^\ast$ maps bijectively $\partial$-closed $2$-forms on $M$ to $\partial$-closed $2$-forms on $N$. Moreover, $f^\ast$ preserves both properties 1) and 2) in Definition \ref{def:0-symplectic} (see \cite[Lemma 2.28]{Cueca:shiftedstructures}, the simplest possible case: $m = 0$, $n = 1$). Accordingly, $\omega$ can be regarded as a ($0$-shifted symplectic) structure on the differentiable stack $[M/G]$. 
	\end{rem}

\subsection{$0$-Shifted Symplectic Atiyah Forms}
			
	Now we provide a first translation of Definition \ref{def:0-symplectic} to the Contact Geometry realm, using the language of Atiyah forms. We begin with an LBG $(L\rightrightarrows L_M; G\rightrightarrows M)$. The simplicial structure of the nerve $L^{(\bullet)}$ determines a cochain complex of Atiyah forms
	\begin{equation}
		\label{eq:Atiyah_complex}
		\begin{tikzcd}
			0 \arrow[r] & \OA^{\bullet}(L_M)\arrow[r, "\partial"] & \OA^{\bullet}(L) \arrow[r, "\partial"] & \OA^{\bullet}(L^{(2)}) \arrow[r, "\partial"] & \cdots
		\end{tikzcd}
	\end{equation}
	where the differential is the alternating sum of the pullbacks of Atiyah forms along the face maps of $L^{(\bullet)}$ (recall from the previous section that the face maps are regular VB morphisms in this case). 
 Exactly as for plain differential forms, for an Atiyah form $\omega\in \OA^\bullet(L_M)$ on $L_M$, the cocycle condition $\partial \omega=0$ means that $s^{\ast}\omega=t^{\ast}\omega$ and for an Atiyah $2$-form it follows that $\im \mathcal{D} \subseteq \ker \omega$. Again we can rephrase the latter inclusion by saying that $\omega \colon D_x L_M \to J^1_x L_M$ is a cochain map between the fibers of the VB-groupoids $DL \rightrightarrows DL_M$ and $J^1 L \rightrightarrows A^\dag$ over $x \in M$ (up to a shift): 
	\begin{equation}
			\label{eq:0-Atiyah}
			\begin{tikzcd}
				0 \arrow[r] &A_x \arrow[r, "\mathcal{D}"] \arrow[d] & D_xL_M \arrow[r] \arrow[d, "\omega"] & 0 \arrow[r] \arrow[d] & 0 \\
				0 \arrow[r] & 0 \arrow[r] & J^1_x L_M \arrow[r, "\mathcal{D}^{\dag}"'] & A_x^{\dag} \arrow[r] &0
			\end{tikzcd}.
		\end{equation}
		
			\begin{definition}
		A \emph{$0$-shifted symplectic Atiyah form} on the LBG $L$ is an Atiyah $2$-form $\omega \in \OA^2(L_M)$ such that $\partial\omega=0$, $\dA\omega=0$ and the cochain map \eqref{eq:0-Atiyah} is a quasi-isomorphism for all $x \in M$, i.e.~1)$\mathcal D$ is injective and 2) $\im \mathcal D = \ker \omega$.
	\end{definition}
	
	The notion of $0$-shifted symplectic Atiyah form is Morita invariant in the sense that, given a VB Morita map $F\colon L' \to L$ between LBGs, the assignment $\omega \mapsto F^\ast \omega$ establishes a bijection between $0$-shifted symplectic Atiyah forms on $L$ and $0$-shifted symplectic Atiyah forms on $L'$. Instead of proving this directly, we prefer to prove the Morita invariance of the equivalent notion of $0$-shifted contact structure (Theorem \ref{theo:Mor_inv_0-shift}). The Morita invariance of the notion of $0$-shifted symplectic Atiyah form will then easily follows from Theorem \ref{theor:0-contact=0-Atiyah} below.
	
	\subsection{Morita Kernel}
	
Our next aim is to provide a definition of $0$-shifted contact structure. Our strategy is translating the dual definition of a standard contact structure in terms of a contact form to the realm of Lie groupoids, taking care of all the aspects (A1)--(A5) listed in the Introduction, in such a way to guarantee Morita invariance at each step. So let $(L\rightrightarrows L_M;G\rightrightarrows M)$ be an LBG (aspect (A1)). Like for Atiyah forms, the simplicial structure of the nerve $L^{(\bullet)}$ determines a complex of line bundle valued forms:
	\begin{equation}
		\label{eq:complex_valuedform}
		\begin{tikzcd}
			0 \arrow[r]& \Omega^{\bullet}(M,L_M) \arrow[r, "\partial"] & \Omega^{\bullet}(G,L)\arrow[r, "\partial"] & \Omega^{\bullet}(G^{(2)},L^{(2)})\arrow[r, "\partial"] & \cdots
		\end{tikzcd}
	\end{equation}
	where the differential $\partial$ is the alternating sum of the pullbacks along the face maps of $L^{(\bullet)}$ (of vector valued forms). 
	
	\begin{rem}\label{rem:partial_components}
	Let $\omega \rightleftharpoons (\omega_0 , \omega_1) \in \Omega^\filleddiamond_D (L^{(\bullet)})$ (so that $\omega_0 \in \Omega^{\filleddiamond-1}(G^{(\bullet)}, L^{(\bullet)})$ and $\omega_1 \in \Omega^\filleddiamond (G^{(\bullet)}, L^{(\bullet)})$). We stress for future reference that $\partial \omega \rightleftharpoons (\partial \omega_0, \partial \omega_1)$.
	\end{rem}
	
	Now take an $L_M$-valued $1$-form $\theta\in \Omega^1(M,L_M)$ such that $\partial\theta=0$ (aspect (A2)). This means that $s^{\ast}\theta=t^{\ast}\theta$. It follows that $\im \rho \subseteq \ker \theta$. Similarly as we did for Atiyah $2$-forms, we now rephrase the latter inclusion, in an apparently oversophisticated way, by noting that it is equivalent to $\theta \colon T M \to L_{M}$ being a cochain map between the core complexes of the tangent groupoid $TG$ and that of the LB groupoid $L$:
	\begin{equation}\label{eq:theta_cm}
	\begin{tikzcd}
				0 \arrow[r] &A \arrow[r, "\rho"] \arrow[d] & T M \arrow[r] \arrow[d, "\theta"]  & 0 \\
				0 \arrow[r] & 0 \arrow[r] & L_{M} \arrow[r]  &0
			\end{tikzcd}.
	\end{equation}

	We would like to take the kernel of $\theta$. However, first of all, $\ker \theta$ might not be a well-defined (smooth) distribution, as its dimension might jump (see Example \ref{exmpl:0-shifted_nonreg} below). More importantly, it turns out that, in order to have Morita invariance of the kernel (aspect (A3)), it is actually necessary to take the \emph{homotopy kernel} of $\theta$ as a cochain map \eqref{eq:theta_cm}. In other words, we have to take the mapping cone of  \eqref{eq:theta_cm} which is (up to a conventional sign):
\begin{equation}
		\label{eq:Morita0-kernel}
		\begin{tikzcd}
			0 \arrow[r] & A\arrow[r, "\rho"] & TM \arrow[r, "\theta"] & L_{M}\arrow[r] &0
		\end{tikzcd},
	\end{equation}
where the non-trivial terms are concentrated in degrees $-1,0,1$.

	\begin{rem}\label{rem:MK_RUTH}
	The complex \eqref{eq:Morita0-kernel} of a $\theta \in \Omega^1 (M, L_M)$ such that $\partial \theta = 0$ is not just a cochain complex. Actually it can be promoted to a \emph{representation up to homotopy} of $G$ \cite{AC:RUTHs, Gracia:VBgroupoids}. See Appendix \ref{app:RUTH} for a reminder about representations up to homotopy (RUTH in what follows).
	Let $(L \rightrightarrows L_M ; G \rightrightarrows M)$ be an LBG and let $\theta \in \Omega^1 (M, L_M)$ be such that $\partial \theta = 0$. Choose once for all an Ehresmann connection on $G$,	 and use it to promote the core complex of $TG$
	\begin{equation*}
	\begin{tikzcd}
			0 \arrow[r] & A \arrow[r, "\rho"] & TM \arrow[r]&0
		\end{tikzcd}
	\end{equation*}
	to the adjoint RUTH $(C(G; A \oplus TM), \partial^{\mathrm{Ad}})$, see \cite{AC:RUTHs, Gracia:VBgroupoids}. We also have the RUTH $(C(G; L_M), \partial^{L})$ on the core complex of $L$
	\[
\begin{tikzcd}
			0 \arrow[r] & L_M \arrow[r] & 0
		\end{tikzcd}
	\]
	coming from the action of $G$ on $L_M$. The $1$-form $\theta$ can be seen as a degree $0$ graded vector bundle morphism $A \oplus TM \to 0 \oplus L_M$ (Diagram \eqref{eq:theta_cm}) that we call $\Phi_0$.
	Then, using \eqref{eq:struct_sect_RUTH_mor}, it is easy to see that, setting $\Phi_k = 0$ for $k > 0$, defines a morphism of RUTHs:
	\[
	\Phi\colon \left(C(G; A \oplus TM), \partial^{\mathrm{Ad}} \right) \to \left(C(G; L_M), \partial^{L}\right).
	\]
	The mapping cone of $\Phi$ now completes the complex \eqref{eq:Morita0-kernel} to a RUTH on $A \oplus TM \oplus L_M$, as claimed (see \cite[Example 3.21]{AC:RUTHs} for why the mapping cone of a morphism of RUTHs is a RUTH as well). We leave the straightforward details to the reader.
	\end{rem}

	\begin{definition}\label{def:MK}
		The RUTH on $A\oplus TM \oplus L_M$ obtained in the previous remark is called the \emph{Morita kernel} of $\theta$. The value of \eqref{eq:Morita0-kernel} at the point $x \in M$ is the \emph{Morita kernel at $x$}.
	\end{definition}

	\begin{rem}\label{rem:0_inv_mc}
	Denote by $\{R_k\}_{k \geq 0}$ the structure operators of the Morita kernel. We stress for future reference that, from \eqref{eq:struct_sect_RUTH} - cases $k = 1,2$, it follows that, for every $g \colon x \to y$ in the groupoid $G$, the vertical arrows in
	\begin{equation}
			\label{eq:mor_ker_orbit}
			\begin{tikzcd}
				0 \arrow[r] & A_x \arrow[r, "\rho"] \arrow[d, "R_1(g)"] & T_xM \arrow[r, "\theta"] \arrow[d, "R_1(g)"] & L_{M,x} \arrow[r] \arrow[d, "R_1(g)"] & 0 \\
				0 \arrow[r] & A_y \arrow[r, "\rho"'] & T_yM \arrow[r, "\theta"'] & L_{M,y} \arrow[r] & 0
			\end{tikzcd}
		\end{equation}
	are a quasi-isomorphism (actually $R_1$ induces a $G$-action in the $R_0$-cohomology). In particular, the complexes at different points of the same orbit of $G$ are quasi-isomorphic. 
	
	We conclude this remark stressing that, unfortunately, the equivalence between RUTHs concentrated in non-negative degrees and higher VBGs \cite{dHT2021} does not apply to the Morita kernel. Indeed, the latter should be thought of as being concentrated in degrees $-1,0,1$. Only in this way it comes with an obvious morphism of RUTHs into the adjoint RUTH and can be duly seen as a \emph{homotopy kernel}. This informal discussion suggests that the Morita kernel is actually a \emph{higher derived stack} rather than a \emph{higher stack}. Exploring this speculation goes beyond the scopes of the present paper (but see Section \ref{sec:Morita_kernel} about the Morita kernel of a $+1$-shifted contact structure, where the situation is surprisingly simpler, indeed the Morita kernel is actually a VB stack in the $+1$-shifted case).
	\end{rem}

	The terminology in Definition \ref{def:MK} is motivated by the fact that the Morita kernel is Morita invariant up to quasi-isomorphisms, as desired, in the sense of the following
	\begin{prop}\label{prop:mor_ker_mor_inv}
		Let $(F,f)\colon (L'\rightrightarrows L'_N; H\rightrightarrows N)\to (L\rightrightarrows L_M; G\rightrightarrows M)$ be a VB Morita map between LBGs, and let $\theta\in \Omega^1(M;L_M)$ be an $L_M$-valued $1$-form such that $\partial\theta=0$. Then, for all $x \in N$, $(F, f)$ induces a quasi-isomorphism between the Morita kernel of $\theta' :=F^{\ast}\theta \in \Omega^1(N,L'_N)$ at the point $x\in N$ and the Morita kernel of $\theta$ at the point $f(x)\in M$.
	\end{prop}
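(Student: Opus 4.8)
The plan is to reduce the statement to an application of the long exact sequence in cohomology (the ``two-out-of-three'' principle for mapping cones) together with the del Hoyo--Ortiz characterization of VB Morita maps (Theorem~\ref{theo:caratterizzazioneVBmorita}). Recall that the Morita kernel \eqref{eq:Morita0-kernel} is, by construction, the mapping cone of the cochain map \eqref{eq:theta_cm} between the core complex of $TG$ (shifted) and the core complex of $L$. So the first step is to record that $(F, f)$ induces, fiberwise over $x \in N$ and $f(x) \in M$, a morphism of mapping cones: this amounts to a commuting ladder
\begin{equation*}
	\begin{tikzcd}[column sep=large]
		\big(0 \to A_{H,x} \xrightarrow{\rho} T_x N \to 0\big) \arrow[r, "\theta'"] \arrow[d] & \big(0 \to 0 \to L'_{N,x} \to 0\big) \arrow[d] \\
		\big(0 \to A_{G,f(x)} \xrightarrow{\rho} T_{f(x)} M \to 0\big) \arrow[r, "\theta"'] & \big(0 \to 0 \to L_{M,f(x)} \to 0\big)
	\end{tikzcd}
\end{equation*}
whose vertical arrows are $df$ on $A$ and $TM$ (together they form the cochain map $df$ of \cite[Corollary 3.8]{DelHoyo:VBmorita}, essentially the fiber of $DF$ restricted along the symbol, cf.\ \eqref{eq:DFcommutes}) and $F_x$ on $L_M$ (an isomorphism of lines since $F$ is an LB morphism). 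The commutativity of the left square is the compatibility $F^\ast \theta = \theta'$ spelled out fiberwise, i.e.\ $F_x \circ \theta'_x = \theta_{f(x)} \circ df_x$, which is exactly the definition of the pullback of a line-bundle-valued form; the right square commutes trivially.

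The second step is the homological algebra. Taking mapping cones of a morphism of cochain maps produces a short exact sequence of complexes
\[
0 \longrightarrow \mathrm{Cone}(\text{source ladder}) \longrightarrow \mathrm{Cone}(F \text{ on kernels}) \longrightarrow \mathrm{Cone}(\text{target ladder})[1] \longrightarrow 0,
\]
but it is cleaner to argue directly: since the Morita kernels at $x$ and $f(x)$ are the cones of $\theta'$ and $\theta$ respectively, and $(F, f)$ gives a commuting square of cochain maps, the induced map on cones is a quasi-isomorphism provided the two vertical maps in the square (the map on the domain complexes and the map on the codomain complexes) are themselves quasi-isomorphisms. The map on the codomain is $F_x \colon L'_{N,x} \to L_{M,f(x)}$, which is an isomorphism. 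The map on the domain is the morphism $df$ from the shifted core complex of $TH$ to that of $TG$; by \cite[Corollary 3.8]{DelHoyo:VBmorita} (invoked here exactly as in the proof of Proposition~\ref{prop:DFMorita}), this is a quasi-isomorphism because $f \colon H \to G$ is a Morita map (which it is, being the base of the VB Morita map $(F,f)$). Hence the map on cones, i.e.\ on Morita kernels, is a quasi-isomorphism, which is the claim.

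To make the ``two-out-of-three for cones'' rigorous one just writes down the long exact sequence relating $H^\bullet(\mathrm{Cone}(\theta'))$, $H^\bullet$ of the domain, and $H^\bullet$ of the codomain (and similarly for $\theta$), obtains a commuting ladder of long exact sequences, and applies the five lemma; this is standard and I would only sketch it. The only genuine input beyond formal nonsense is \cite[Corollary 3.8]{DelHoyo:VBmorita}, which supplies the quasi-isomorphism on the $(A \to TM)$ part. I expect the main (minor) obstacle to be purely bookkeeping: keeping the degree conventions of \eqref{eq:Morita0-kernel} consistent with the mapping-cone sign convention, and making sure the ``fiberwise over $x$'' assertion is phrased so that it genuinely says a quasi-isomorphism of $2$-term-plus-one complexes rather than something weaker. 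None of this is serious; the proof is essentially the observation that a Morita kernel is a homotopy kernel and homotopy kernels of quasi-isomorphic data are quasi-isomorphic.
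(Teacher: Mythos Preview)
Your proposal is correct and follows essentially the same approach as the paper's proof: both recognize the Morita kernel as a mapping cone, set up the commuting square of cochain maps induced by $(F,f)$, invoke \cite[Corollary 3.8]{DelHoyo:VBmorita} for the quasi-isomorphism on the $(A \to TM)$ side and the LB-morphism property for the isomorphism on the $L_M$ side, and conclude by the standard homological-algebra fact that a morphism of cochain maps which is a quasi-isomorphism on source and target induces a quasi-isomorphism on cones. The paper phrases the last step simply as ``from standard Homological Algebra,'' which is exactly your five-lemma/long-exact-sequence argument.
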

	\begin{proof}
		The statement can be proved by bare hands. We propose a more conceptual proof that might conjecturally be generalized to more complicated situations (hopefully higher shifted contact structures on higher groupoids). Denote by $A_H, A_G$ the Lie algebroids of $H, G$ and let $\rho_H, \rho_G$ be their anchors.
		Now, fix $x \in N$, and consider the following diagram of cochain complexes and cochain maps:	
		\begin{equation}\label{eq:funct_mapping_cone}
		{\scriptsize
		\begin{tikzcd}
				0 \arrow[rr] & & A_{H,x} \arrow[rr, "\rho_H" near start] \arrow[dd] \arrow[dr, "df"] & & T_xN \arrow[rr] \arrow[dd, "\,\theta'" near end] \arrow[dr, "df"] &  & 0 \\
				& 0 \arrow[rr, crossing over] & & A_{G,f(x)} \arrow[rr, crossing over, "\rho_G" near start] & & T_{f(x)}M \arrow[rr, crossing over] \arrow[dd, "\,\theta" near end]&  & 0 \\
				0 \arrow[rr] & &0\arrow[rr] & & L'_{N, x} \arrow[rr] \arrow[dr, "F"]&  & 0 \\
				& 0 \arrow[rr] & & 0 \arrow[from=uu, crossing over] \arrow[rr] \arrow[ul, equal]& & L_{M, f(x)} \arrow[from=uu, crossing over] \arrow[rr]  &  & 0 
			\end{tikzcd}}
		\end{equation}
As $(F, f)$ is VB Morita, then $f$ is a Morita map and, from \cite[Corollary 3.8]{DelHoyo:VBmorita}, $(df, f) \colon TH \to TG$ is a VB Morita map between the tangent groupoids. It follows that the diagonal maps in the upper square of \eqref{eq:funct_mapping_cone} are a quasi-isomorphism. As $F$ is a VB Morita map then it is an LB morphism, and the diagonal maps of the lower square are also a quasi-isomorphism. Then, from standard Homological Algebra, the induced cochain map between the mapping cones (of the back and front squares):
		\begin{equation*}
			\begin{tikzcd}
				0 \arrow[r] & A_{H,x} \arrow[r, "\rho_H"] \arrow[d, "df"] & T_xN \arrow[r, "\theta'"] \arrow[d, "df"] & L'_{N,x} \arrow[r] \arrow[d, "F"] & 0 \\
				0 \arrow[r] & A_{G,f(x)} \arrow[r, "\rho_G"'] & T_{f(x)}M \arrow[r, "\theta"'] & L_{M,f(x)} \arrow[r] & 0
			\end{tikzcd}
		\end{equation*}
		is a quasi-isomorphism as well.
	\end{proof}
	
	\begin{rem}\label{rem:Mor_ker_theta_neq_0}
	When $\theta_x \neq 0$ for all $x \in M$, then $K_\theta := \ker \theta \subseteq TM$ is a well-defined distribution on $M$. In this case, the Morita kernel can be replaced, up to quasi-isomorphisms, by a simpler RUTH. Specifically, it is easy to see that the structure operators of the adjoint RUTH, restrict to the $2$-term complex of vector bundles
	\begin{equation}\label{eq:ker_RUTH}
		\begin{tikzcd}
			0 \arrow[r] & A \arrow[r, "\rho"] & K_{\theta} \arrow[r]&0
		\end{tikzcd}.
	\end{equation}
	Additionally, the cochain map
		\begin{equation}\label{eq:Mor_ker_theta_neq_0}
			\begin{tikzcd}
				0 \arrow[r] &A \arrow[r, "\rho"] \arrow[d, equal]& K_{\theta} \arrow[r] \arrow[d, swap, "\mathrm{in}"] & 0 \arrow[r] \arrow[d] & 0 \\
				0 \arrow[r] & A \arrow[r, "\rho"'] & TM \arrow[r, "\theta"'] & L_{M} \arrow[r] &0
			\end{tikzcd},
		\end{equation}
		where $\mathrm{in}\colon K_\theta \to TM$ denotes the inclusion, is a strict RUTH quasi-isomorphism.
		\end{rem}
		
		\subsection{Morita Curvature}
	
	Now take again $\theta \in \Omega^1 (M, L_M)$ such that $\partial \theta = 0$. Our next aim is defining the ``curvature'' of $\theta$. When $\ker \theta$ is not a well-defined distribution, there is no hope of defining the curvature in the usual way. Moreover, we would like to have a Morita invariant notion of curvature (aspect (A4)). In fact, in this setting, the role of the curvature is played by a RUTH morphism between the Morita kernel of $\theta$ and its \emph{twisted dual RUTH}. Begin noticing that, given a RUTH $E$ of $G \rightrightarrows M$ with structure operators $\{R_k\}_{k \geq 0}$, we get an \emph{$L$-twisted dual RUTH} $E^\dag$ with structure operators $\{R^\dag_k\}_{k \geq 0}$ by putting $E^\dag = \operatorname{Hom}(E, L_M)$ and
	\[
	R^\dag_k (g_1, \ldots, g_k) = R_k (g_k^{-1}, \ldots, g_1^{-1})^\dag \colon (E^\dag)^\bullet \to (E^\dag)^{1+k-\bullet}
	\]
	(see \cite[Example 3.19]{AC:RUTHs} for the analogous construction of the \emph{dual RUTH} but beware of the sign as we follow a different convention!). Hence the complex:
	\begin{equation}\label{eq:adjoint_mor_ker_0}
		\begin{tikzcd}
			0 \arrow[r] & \mathbbm{R}_M \arrow[r, "\theta^{\dag}"] & T^{\dag}M \arrow[r, "\rho^{\dag}"] & A^{\dag} \arrow[r] & 0
		\end{tikzcd},
	\end{equation}
	where $\mathbbm{R}_M:=M \times \mathbbm{R}$ is the trivial line bundle over $M$, can be completed to the $L$-twisted dual RUTH of the Morita kernel. Next, we define a morphism of RUTHs
\[
\Phi \colon C(G; A \oplus TM \oplus L_M) \to  C(G; \mathbbm R_M \oplus T^\dag M \oplus A^\dag)
\]
between the Morita kernel and its twisted dual RUTH.
	
	\begin{prop}
		Let $\theta\in \Omega^1(M,L_M)$ be such that $\partial\theta=0$ and let $\nabla$ be a connection on $L_M$. The vertical arrows in the diagram
		\begin{equation}
			\label{eq:0moritakernel}
			\begin{tikzcd}
				0 \arrow[r] & A \arrow[r, "\rho"] \arrow[d, "-F_\nabla"] & TM \arrow[r, "\theta"] \arrow[d, "d^{\nabla}\theta"] & L_{M} \arrow[r] \arrow[d, "F_{\nabla}^{\dag}"] & 0 \\
				0 \arrow[r] & \mathbbm{R} \arrow[r, "\theta^{\dag}"'] & T^{\dag}M \arrow[r, "\rho^{\dag}"'] &A^{\dag} \arrow[r] & 0
			\end{tikzcd}
		\end{equation}
		form a cochain map.
	\end{prop}
	\begin{proof}
		From the skew symmetry of $d^{\nabla}\theta$ it is enough to prove that the first square in \eqref{eq:0moritakernel} commutes. To do this, consider the Atiyah form $\omega \rightleftharpoons(\theta,0)\in \OA^2(L_M)$. As $\partial \theta = 0$, we also have $\partial \omega = 0$. It follows that $\im\mathcal{D}\subseteq \ker\omega$. Now the claim easily follows from Corollary \ref{coroll:kernelofAtiyahform}.\emph{ii)}.
	\end{proof}
	
	The cochain map \eqref{eq:0moritakernel} will be denoted $\Phi_0$ and it is the $0$-th component of the morphism of RUTHs
\[
\Phi \colon C(G; A \oplus TM \oplus L_M) \to  C(G; \mathbbm R_M \oplus T^\dag M \oplus A^\dag)
\]
between the Morita kernel and its twisted dual RUTH which we are looking for. To see this we will also need the $2$-term RUTH corresponding to the VBG $(DL \rightrightarrows DL_M; G \rightrightarrows M)$ (see Appendix \ref{app:RUTH}).

Besides $\Phi_0$, $\Phi$ possesses just one more component:
\[
\Phi_1 \colon G \to \operatorname{Hom}\left(s^\ast (A \oplus TM \oplus L_M), t^\ast (\mathbbm R_M \oplus T^\dag M \oplus A^\dag)\right)
\]
consisting of two summands $\Phi_1 \colon G \to \operatorname{Hom} (s^\ast TM, t^\ast \mathbbm R_M)$ and $\Phi_1 \colon G \to \operatorname{Hom}(s^\ast L_M, t^\ast T^\dag M)$, given by
\[
\Phi_1 (g) v := \nabla_{R_1^T (g) v}- R_1^D (g) \nabla_v  \in \operatorname{End} L_{M, t(g)} \cong \mathbbm R, \quad g \in G, \quad v \in T_{s(g)} M,
\]
and
\[
\big\langle \Phi_1 (g) \lambda, w \big\rangle := g. \big((\Phi_1 (g^{-1})w)  \lambda  \big) \in L_{M, t(g)}, \quad g \in G, \quad \lambda \in L_{M,s(g)}, \quad w \in T_{t(g)}M.
\]

\begin{prop}\label{prop:MR_RUTH_morph}
The maps $\{\Phi_0, \Phi_1\}$ are the components of a morphism of RUTHs
\[
\Phi \colon C(G; A \oplus TM \oplus L_M) \to  C(G; \mathbbm R_M \oplus T^\dag M \oplus A^\dag),
\]
where all the other components are trivial.
\end{prop}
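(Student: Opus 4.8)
The plan is to check directly that the pair $\{\Phi_0, \Phi_1\}$ satisfies the defining equations of a morphism of RUTHs recalled in Appendix~\ref{app:RUTH} (see \eqref{eq:struct_sect_RUTH_mor}). The relevant data are: the source RUTH $C(G; A \oplus TM \oplus L_M)$ — the Morita kernel, whose structure sections $\{R_k\}_{k \geq 0}$ are those produced by the mapping cone construction of Remark~\ref{rem:MK_RUTH}, so that $R_0$ is the differential of the complex \eqref{eq:Morita0-kernel}, $R_1 = R_1^{\mathrm{Ad}} \oplus R_1^{L_M}$ is block diagonal with respect to $(A \oplus TM) \oplus L_M$ (the quasi-action of the adjoint RUTH on $A \oplus TM$ together with the $G$-action on $L_M$), $R_2 = R_2^{\mathrm{Ad}} \oplus 0$ is the basic curvature of the chosen Ehresmann connection, and $R_k = 0$ for $k \geq 3$; the target RUTH $C(G; \mathbbm R_M \oplus T^\dag M \oplus A^\dag)$, which by Remark~\ref{rem:MC_RUTH} is the $L$-adjoint of the Morita kernel, with structure sections $\{R_k^\dag\}_{k \geq 0}$; and the candidate morphism $\Phi$ with $\Phi_0 = \mc_\theta$, $\Phi_1$ as in the statement, and $\Phi_k = 0$ for $k \geq 2$.

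First I would reduce the list of equations to be checked. Both RUTHs are concentrated in degrees $-1, 0, 1$, have vanishing structure sections of arity $\geq 3$, and $\Phi$ has only the degree $0$ section $\Phi_0$ and the degree $-1$ section $\Phi_1$. A degree-and-arity count then shows that the $n$-th structure equation of a morphism of RUTHs is trivially satisfied for every $n \geq 3$: each of its summands contains a factor $\Phi_{\geq 2} = 0$, or a factor $R_{\geq 3} = R_{\geq 3}^\dag = 0$, or else it is one of the compositions $R_2^\dag \circ \Phi_1$, $\Phi_1 \circ R_2$; but $R_2 = R_2^{\mathrm{Ad}} \oplus 0$ maps only the degree $0$ summand $TM$ into the degree $-1$ summand $A$, on which $\Phi_1$ vanishes (for degree reasons, as no degree $-2$ summand is present), while $R_2^\dag$ acts nontrivially only on the degree $1$ summand $A^\dag$, which is disjoint from the image of $\Phi_1$ — so these compositions vanish too. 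Hence only the equations for $n = 0, 1, 2$ require verification, and the $n = 0$ one is precisely the statement that $\mc_\theta$ is a cochain map, i.e.~the Prop/Def above.

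It then remains to verify the $n = 1$ and $n = 2$ equations, which are concrete (if somewhat lengthy) identities between vector bundle maps. For $n = 1$ the equation says, for each arrow $g$ of $G$, that the cochain map $\Phi_0$ intertwines the quasi-actions $R_1(g)$ and $R_1^\dag(g)$ up to the cochain homotopy $\Phi_1(g)$; I would check this equality of degree $0$ maps on each of the three graded summands $A$, $TM$, $L_M$ of the source separately. On $A$ it comes out, after expanding $F_\nabla = f_\nabla \circ \mathcal D$ and using the compatibility of the adjoint quasi-action with the anchor $\rho$ and with the core-anchor $\mathcal D$, to the defining formula for the first component of $\Phi_1$ (which encodes the failure of $\nabla$, as a splitting $TM \to DL_M$, to intertwine $R_1^T$ and $R_1^D$). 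On $TM$ it comes out, via Corollary~\ref{coroll:kernelofAtiyahform} applied to $\omega \rightleftharpoons (\theta, 0)$, the identity $\partial\theta = 0$ (i.e.~$s^\ast\theta = t^\ast\theta$), and the definition of the quasi-action, to a relation comparing $d^\nabla\theta$ at the two endpoints of $g$, with correction terms supplied by $\Phi_1$. The $L_M$-component then follows from the $A$-component with no further work: the second component of $\Phi_1$ is by definition the $L$-adjoint of the first, and, the target being the $L$-adjoint RUTH, the $L_M$-equation is literally the $L$-adjoint (at $g^{-1}$) of the $A$-equation. For $n = 2$ the equation reads $R_1^\dag(g)\Phi_1(h) - \Phi_1(gh) + \Phi_1(g)R_1(h) + R_2^\dag(g,h)\Phi_0 - \Phi_0 R_2(g,h) = 0$; I would again restrict to graded summands — it is vacuous on $A$, its $TM$-component reduces to the $n = 2$ (Bianchi-type) cocycle identities of the adjoint RUTH and of the $2$-term RUTH of $DL \rightrightarrows DL_M$, tied together by the compatibility of $\nabla$ and $F_\nabla$ with both, and its $L_M$-component is once more the $L$-adjoint of the $TM$-one.

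The main obstacle is bookkeeping rather than conceptual difficulty: matching the sign conventions of Appendix~\ref{app:RUTH} (the excerpt already flags a sign discrepancy in the definition of the $L$-adjoint RUTH), identifying correctly the basic curvature $R_2^{\mathrm{Ad}}$ of the chosen Ehresmann connection and its interaction with $d^\nabla\theta$ in the $n = 2$ equation, and keeping track of which of the arrows $g, h, g^{-1}, \ldots$ occurs where because of the $(g_1, \ldots, g_k) \mapsto (g_k^{-1}, \ldots, g_1^{-1})$ twist in $R_k^\dag$ and in the second component of $\Phi_1$. Once the degree reduction above is in place none of the remaining computations is deep, and the observation that the $L_M$- and $T^\dag M$-components are forced by the $A$- and $TM$-components roughly halves the amount of checking.
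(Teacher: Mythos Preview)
Your proposal is correct and follows essentially the same route as the paper: a direct verification of the RUTH morphism identities \eqref{eq:struct_sect_RUTH_mor}, with the $k=0$ case being the Morita curvature cochain map property, the $k=1$ and $k=2$ cases checked summand by summand (the paper uses Equation~\eqref{eq:omegaandcomponents} where you invoke Corollary~\ref{coroll:kernelofAtiyahform}, and also exploits the adjunction observation you highlight to deduce the $L_M$-component from the $A$-/$TM$-component), and the higher cases dismissed on degree grounds. Your degree-and-arity analysis for $n\geq 3$ is in fact more explicit than the paper's, which simply remarks that $L_M$ is a plain representation and that for degree reasons no higher identities remain.
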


The proof of Proposition \ref{prop:MR_RUTH_morph} is a long computation. We present some of the details in Appendix \ref{app:RUTH_mor}.

\begin{definition}\label{def:Mor_curv_0-shift}
The morphism of RUTH $\Phi$ in Proposition \ref{prop:MR_RUTH_morph} is called the \emph{Morita curvature of} $\theta$. The value of the cochain map \eqref{eq:0moritakernel} at the point $x\in M$ is the \emph{Morita curvature at $x$}.
\end{definition}

The terminology in Definition \ref{def:Mor_curv_0-shift} is motivated by the fact that the Morita curvature is Morita invariant in an appropriate sense (see Proposition \ref{prop:mor_curv_mor_inv} below, see also Proposition \ref{prop:0_MC_conn}).

	\begin{rem}\label{rem:MC_RUTH}
	We stress here for future reference that it follows from the RUTH morphism identities \eqref{eq:struct_sect_RUTH_mor} that, for every arrow $g \colon x \to y$ in the groupoid $G$, the diagram
\begin{equation}\label{eq:mor_curv_orbit}
	{\scriptsize
	\begin{tikzcd}
		0 \arrow[rr] & & A_x \arrow[rr] \arrow[dd, pos=0.2, swap, "-F_{\nabla}"] \arrow[dr, "R_1(g)"] & & T_xM \arrow[rr] \arrow[dd, pos=0.2, swap, "d^{\nabla}\theta"] \arrow[dr, "R_1(g)"] & & L_{M,x} \arrow[rr] \arrow[dd, pos=0.2, swap, "F_{\nabla}^\dagger"] \arrow[dr, "R_1(g)"] & & 0 \\
		& 0 \arrow[rr, crossing over] & & A_y \arrow[rr, crossing over] & & T_yM \arrow[rr, crossing over] & & L_{M,y} \arrow[rr] & & 0 \\
		0 \arrow[rr] & & \mathbbm{R} \arrow[rr] & & T_x^{\dag}M \arrow[rr] \arrow[dr, "R_1^{\dag}(g)"] & & A_x^{\dag} \arrow[rr] \arrow[dr, "R_1^{\dag}(g)"] & & 0 \\
		& 0 \arrow[rr] & & \mathbbm{R} \arrow[from=uu, crossing over, pos=0.2, "-F_{\nabla}"] \arrow[rr] \arrow[ul,equal] & & T_y^{\dag}M \arrow[from=uu, crossing over, pos=0.2, "d^{\nabla}\theta"] \arrow[rr] & & A^{\dag}_y \arrow[from=uu, crossing over, pos=0.2, "F_{\nabla}^\dagger"] \arrow[rr] & & 0 
	\end{tikzcd}},
\end{equation}
although not being commutative, does actually define a commutative diagram in the cohomology of the Morita kernel (and its twisted dual RUTH). As the diagonal arrows are quasi-isomorphisms (Remark \ref{rem:0_inv_mc}), we conclude that the Morita curvatures at different points of the same orbit of $G$ are related by isomorphisms in cohomology. 
\end{rem}
	
	In order to define the Morita curvature, we fixed a connection. However, the Morita curvature is independent of the choice of the connection $\nabla$ up to homotopies between the $0$-th components. More precisely, we have the following
	\begin{prop}\label{prop:0_MC_conn}
		Let $\nabla$ and $\nabla'$ be two connections on $L_M$. Then there is a homotopy between the $0$-th components of the corresponding Morita curvatures.
	\end{prop}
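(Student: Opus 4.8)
The plan is to write down the homotopy explicitly. Since $\operatorname{End}(L_M) = \mathbbm{R}_M$ is the trivial line bundle, the two connections differ by a genuine $1$-form $\alpha := \nabla' - \nabla \in \Omega^1(M)$, i.e.\ $\nabla'_v = \nabla_v + \alpha(v)\operatorname{id}$. First I would record how each of the three vertical arrows of the Morita curvature diagram \eqref{eq:0moritakernel} changes when $\nabla$ is replaced by $\nabla'$. From the defining formula of the left splitting one gets $f_{\nabla'} = f_\nabla - \alpha\circ\sigma \colon DL_M \to \mathbbm{R}_M$; composing with the core-anchor $\mathcal{D}$ and using $\sigma\circ\mathcal{D} = \rho$ this gives $F_{\nabla'} = F_\nabla - \rho^\ast\alpha$, where $\rho^\ast\alpha := \alpha\circ\rho \colon A \to \mathbbm{R}_M$; dually $F_{\nabla'}^\dag = F_\nabla^\dag - (\rho^\ast\alpha)^\dag$. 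Finally, the standard transformation rule for the connection differential yields $d^{\nabla'}\theta = d^\nabla\theta + \alpha\wedge\theta$, with $(\alpha\wedge\theta)(v,w) = \alpha(v)\theta(w) - \alpha(w)\theta(v)$. Hence the difference of the two Morita curvatures, as a degree $0$ map from the Morita kernel \eqref{eq:Morita0-kernel} to its adjoint complex \eqref{eq:adjoint_mor_ker_0}, has components $\rho^\ast\alpha$ in degree $-1$, $\alpha\wedge\theta$ in degree $0$, and $-(\rho^\ast\alpha)^\dag$ in degree $1$.

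Next I would exhibit the homotopy. The adjoint complex \eqref{eq:adjoint_mor_ker_0} vanishes outside degrees $-1,0,1$, so a homotopy $h$ between the two cochain maps has only two possibly nonzero components, $h^0 \colon TM \to \mathbbm{R}_M$ and $h^1 \colon L_M \to T^\dag M$. I would set $h^0 := \alpha$, viewed as the VB map $TM \to \mathbbm{R}_M$, and $h^1(\lambda) := -\lambda\otimes\alpha$ under the identification $T^\dag M = L_M\otimes T^\ast M$, i.e.\ $h^1(\lambda)(w) = -\alpha(w)\lambda$; both are smooth VB morphisms over $M$.

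It then remains to check the homotopy identity componentwise, which I expect to be routine. In degree $-1$ only $h^0\circ\rho = \rho^\ast\alpha$ survives, matching the degree $-1$ component above. In degree $1$ only $\rho^\dag\circ h^1$ survives, and unwinding the adjoints gives $(\rho^\dag\circ h^1)(\lambda) = -(\alpha\circ\rho)\,\lambda = -(\rho^\ast\alpha)^\dag(\lambda)$, again as required. The only computation with real content is in degree $0$: $(\theta^\dag\circ h^0 + h^1\circ\theta)(v)$ is the $L_M$-valued $1$-form $w \mapsto \alpha(v)\theta(w) - \alpha(w)\theta(v) = (\alpha\wedge\theta)(v,w)$, so it equals $d^{\nabla'}\theta - d^\nabla\theta$, as needed. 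I do not anticipate any genuine obstacle: the only delicate points are the bookkeeping of the various $L_M$-twisted adjoint identifications and of the signs, together with the correct transformation rule for $d^\nabla$ under a change of connection. As an alternative, more conceptual but longer route, one could instead observe that all three arrows of \eqref{eq:0moritakernel} are read off from the $\dA$-closed Atiyah $2$-form $\omega\rightleftharpoons(\theta,0)$ via Proposition \ref{prop:ker_omega_comp}, which makes the dependence on $\nabla$ entirely transparent and reduces the claim to the fact that changing $\nabla$ changes the splitting $f_\nabla$ by precisely the amount encoded in $h$; one could even upgrade $h$ to a homotopy of RUTH morphisms in the sense of Remark \ref{rem:MC_RUTH}, but this is not needed here.
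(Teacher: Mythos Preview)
Your proof is correct and follows essentially the same approach as the paper: both set the homotopy to be the difference $1$-form $\alpha = \nabla' - \nabla$ (the paper uses the opposite sign, $h = \nabla - \nabla'$) together with its $L_M$-adjoint, and then verify the homotopy identity degree by degree via the same three elementary computations. The only difference is cosmetic sign bookkeeping.
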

	\begin{proof}
		The difference $h = \alpha_{\nabla, \nabla'} := \nabla - \nabla'$ can be seen as a $1$-form on $M$: $h \in \Omega^1(M)$. The diagonal arrows in the diagram
		\begin{equation*}
		{\footnotesize
			\begin{tikzcd}
				0 \arrow[rr] & & A \arrow[rr, "\rho"] \arrow[dd, "-F_\nabla", shift left=0.5ex] \arrow[dd, "-F_{\nabla'}"', shift right=0.5ex] & &  TM \arrow[ddll, "h"] \arrow[rr, "\theta"] \arrow[dd, "d^{\nabla}\theta", shift left=0.5ex] \arrow[dd, "d^{\nabla'}\theta"', shift right=0.5ex] & & L_{M} \arrow[ddll, "-h^\dag"] \arrow[r] \arrow[dd, "F^\dag_\nabla", shift left=0.5ex] \arrow[dd, "F^\dag_{\nabla'}"', shift right=0.5ex] & 0 \\ \\
				0 \arrow[rr] & &  \mathbbm{R}_M \arrow[rr, "\theta^{\dag}"'] & &  T^{\dag}M \arrow[rr, "\rho^{\dag}"'] & & A^{\dag} \arrow[r] & 0
			\end{tikzcd}}
		\end{equation*}
		are a homotopy between the $0$-th components of the Morita curvatures. Indeed,  in degree $-1$, for every $x \in M$, take $a\in A_x$ and $\lambda \in \Gamma (L_M)$. Then
		\begin{equation*}
			(F_{\nabla'}-F_\nabla)(a)\lambda_x= 
			(\mathcal{D}_a -\nabla'_{\rho(a)} -\mathcal{D}_a + \nabla_{\rho(a)}) 
		\lambda = (\nabla_{\rho(a)}-\nabla'_{\rho(a)})\lambda = h(\rho(a))\lambda_x,
		\end{equation*}
		i.e. $F_{\nabla'}-F_\nabla=h\circ \rho$. In degree $0$, take $v,v'\in T_xM$, and let $V, V'\in \mathfrak{X}(M)$ be such that $V_x=v$ and $V'_x=v'$. Then
		\begin{align*}
			(d^{\nabla}\theta - d^{\nabla'}\theta)(v,v')&= \nabla_v\theta(V') - \nabla_{v'}\theta(V)- \theta([V,V']_x) -\nabla'_v\theta(V') + \nabla'_{v'}\theta(V)+ \theta([V,V']_x) \\
			&= (\nabla_v - \nabla'_v)\theta(v') - (\nabla_{v'} -\nabla'_{v'})\theta(v)
	=h(v)\theta(v')- h(v') \theta(v)\\
	&= \big(\theta^{\dag}(h(v)) - h^{\dag}(\theta(v))\big) (v'),
		\end{align*}
		i.e. $d^\nabla \theta - d^{\nabla'}\theta = \theta^\dag \circ h - h^\dag \circ \theta$. For degree $+1$, just notice that the last triangle is the twisted transpose of the first one.
		\end{proof}
		
	Not only the Morita curvature is independent of the connection (up to homotopies of the $0$-th components), it is actually Morita invariant in an appropriate sense. Namely, take a VB Morita map of LBGs $(F,f)\colon (L'\rightrightarrows L'_N; H\rightrightarrows N) \to (L\rightrightarrows L_M; G\rightrightarrows M)$, and let $\theta\in \Omega^1(M,L_M)$ be such that $\partial\theta=0$. Moreover, let $\nabla$ be a connection on $L_M$. Denote $\theta' = F^\ast \theta \in \Omega^1 (N, L'_N)$. Finally denote by $\nabla' = f^\ast \nabla$ the pull-back connection. For every $x \in N$ we then have the diagram
	\begin{equation}\label{diag:mor_curv_mor_inv}
			{\scriptsize
			\begin{tikzcd}
				0 \arrow[rr] & & A_{H,x} \arrow[rr] \arrow[dd] \arrow[dr, "df"] & & T_xN \arrow[rr] \arrow[dd] \arrow[dr, "df"] & & L'_{N,x} \arrow[rr] \arrow[dd] \arrow[dr, "F"] & & 0 \\
				& 0 \arrow[rr, crossing over] & & A_{G,f(x)} \arrow[rr, crossing over] & & T_{f(x)}M \arrow[rr, crossing over] & & L_{M,f(x)} \arrow[rr] & & 0 \\
				0 \arrow[rr] & & \mathbbm{R} \arrow[rr] & & T_x^{\dag}N \arrow[rr] & & A_{H,x}^{\dag} \arrow[rr] & & 0 \\
				& 0 \arrow[rr] & & \mathbbm{R} \arrow[from=uu, crossing over] \arrow[rr] \arrow[ul,equal] & & T_{f(x)}^{\dag}M \arrow[from=uu, crossing over] \arrow[rr] \arrow[ul, "df^{\dag}"] & & A^{\dag}_{G,f(x)} \arrow[from=uu, crossing over] \arrow[rr] \arrow[ul, "df^{\dag}"] & & 0 
			\end{tikzcd}},
		\end{equation}
where the front vertical arrows are the Morita curvature of $\theta$ determined by the connection $\nabla$ at the point $f(x)$, and the back vertical arrows are the Morita curvature of $\theta'$ determined by $\nabla'$ at the point $x$. We know already from Proposition \ref{prop:mor_ker_mor_inv} that the diagonal arrows on the top and the bottom of the diagram form quasi-isomorphisms. Moreover, a straightforward computation, that we leave to the reader, shows that the diagram \eqref{diag:mor_curv_mor_inv} commutes. We have thus proved the following

	\begin{prop}\label{prop:mor_curv_mor_inv}
		Let $(F,f), \theta', \theta, \nabla', \nabla$ be as above.
		Then the Morita curvatures of $\theta', \theta$ (with respect to $\nabla', \nabla$) at the points $x, f(x)$ are related by quasi-isomorphisms as in diagram \eqref{diag:mor_curv_mor_inv}.
	\end{prop}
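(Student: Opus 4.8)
The plan is to read the cube \eqref{diag:mor_curv_mor_inv} as having two \emph{horizontal} faces — the diagonal cochain maps along the top and along the bottom — and two \emph{vertical} faces — one being the Morita curvature of $\theta'$ at $x$, the other the Morita curvature of $\theta$ at $f(x)$. Once it is known that the two horizontal faces are quasi-isomorphisms and that the cube commutes, the statement follows immediately. So there are exactly two things to check: (i) the top and the bottom diagonal arrows form quasi-isomorphisms, and (ii) the cube \eqref{diag:mor_curv_mor_inv} commutes.

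For (i), the top diagonal arrows $(df, df, F)$ are precisely the cochain map produced in Proposition \ref{prop:mor_ker_mor_inv}, hence a quasi-isomorphism from the Morita kernel of $\theta'$ at $x$ to the Morita kernel of $\theta$ at $f(x)$. I would then observe that the bottom diagonal arrows $(\mathrm{id}, df^{\dag}, df^{\dag})$ are the $L_M$-valued adjoint of the top ones: indeed the adjoint complex \eqref{eq:adjoint_mor_ker_0} is by construction the fibrewise $\operatorname{Hom}(-, L_M)$-dual of the Morita kernel \eqref{eq:Morita0-kernel}, and $(\mathrm{id}, df^{\dag}, df^{\dag})$ is exactly the map $\psi \mapsto F^{-1} \circ \psi \circ (df, df, F)$ that it induces (the identity in degree $-1$ coming from $F$-conjugation of endomorphisms of the line $L'_{N,x}$). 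Since fibrewise $\operatorname{Hom}(-, L_M)$ is exact and sends quasi-isomorphisms of finite-dimensional cochain complexes to quasi-isomorphisms, the bottom diagonal arrows form a quasi-isomorphism as well.

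For (ii), I would organise the bookkeeping through Atiyah forms. Recall that, once $\nabla$ is chosen, the Morita curvature \eqref{eq:0moritakernel} of $\theta$ is assembled (via Proposition \ref{prop:ker_omega_comp}) out of the flat map of the $\dA$-closed Atiyah $2$-form $\omega \rightleftharpoons (\theta, 0) = \dA\sigma^{\ast}\theta \in \OA^2(L_M)$ together with the core-anchor $\mathcal D \colon A \to DL_M$ and its adjoint; likewise the Morita curvature of $\theta'$ is assembled out of $\omega' := \dA\sigma^{\ast}\theta' \in \OA^2(L'_N)$, the core-anchor $\mathcal D_H$ and its adjoint. The three facts I would feed in are: $\omega' = F^{\ast}\omega$ (because $\theta' = F^{\ast}\theta$ and $F^{\ast}$ commutes with $\sigma^{\ast}$ and with $\dA$, as recalled in Section \ref{sec:atiyah}); $DF \circ \mathcal D_H = \mathcal D_G \circ df$ (since the VBG morphism $DF$ restricts to $df$ on cores); and the compatibility of the pull-back connection $\nabla' = f^{\ast}\nabla$ with $DF$ and the canonical $\operatorname{End}$-identifications of the two line bundles. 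Plugging these into the explicit description of the pull-back of Atiyah $2$-forms — equivalently, into the formula of Proposition \ref{prop:ker_omega_comp} — gives the commutativity of each of the three squares of the cube; the degree $+1$ square is in any case the $\operatorname{Hom}(-, L_M)$-adjoint of the degree $-1$ one, so it comes for free.

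The step I expect to be the main obstacle is precisely (ii): one has to keep scrupulous track of the factor $F_x^{-1}$ that enters the definitions of the pull-backs of $L$-valued and of Atiyah forms in Section \ref{sec:atiyah}, and verify that it is consistent with the $F$ appearing on the top diagonal and with the $F^{-1}$-twisted adjoints $df^{\dag}$ appearing on the bottom diagonal. Funneling the whole computation through the single identity $F^{\ast}\omega = \dA\sigma^{\ast}(F^{\ast}\theta)$ for $\omega = \dA\sigma^{\ast}\theta$ is the cleanest way to control this; everything else is formal homological algebra. A more conceptual alternative would be to upgrade the argument to the level of the RUTH morphism of Remark \ref{rem:MC_RUTH} and to invoke functoriality of the mapping cone under the quasi-isomorphisms of Proposition \ref{prop:mor_ker_mor_inv}, exactly in the spirit of the proof of that proposition.
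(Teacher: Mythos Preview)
Your proposal is correct and follows essentially the same route as the paper: the paper also reduces the statement to (i) the top and bottom diagonal arrows being quasi-isomorphisms (invoking Proposition \ref{prop:mor_ker_mor_inv}) and (ii) the commutativity of the cube, which it declares ``a straightforward computation, that we leave to the reader''. Your account is in fact more complete than the paper's: you correctly note that Proposition \ref{prop:mor_ker_mor_inv} only directly gives the top face and supply the duality argument for the bottom face, and you sketch a clean way (via $F^{\ast}\omega = \dA\sigma^{\ast}(F^{\ast}\theta)$) to organise the commutativity check that the paper omits.
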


\subsection{Definition and Examples}

	We are now ready to take care of the aspect (A5) of the definition of a contact structure in the Introduction, and give a definition of $0$-shifted contact structure. Let $(L \rightrightarrows L_M; G \rightrightarrows M)$ be an LBG.
	
	\begin{definition}\label{def:0-shift_cont}
		A \emph{$0$-shifted contact structure} on $L$ is an $L_M$-valued $1$-form $\theta\in \Omega^1(M,L_M)$ such that $\partial \theta=0$ and the Morita curvature at $x$ is a quasi-isomorphism for all points $x\in M$.
	\end{definition}
	
	The notion of $0$-shifted contact structure is Morita invariant in the sense of the following
	
	\begin{theo}\label{theo:Mor_inv_0-shift}
Let $(F,f)\colon (L'\rightrightarrows L'_N; H\rightrightarrows N) \to (L\rightrightarrows L_M; G\rightrightarrows M)$ be a VB Morita map of LBGs. Then the assignment $\theta \mapsto F^\ast \theta$ establishes a bijection between $0$-shifted contact structures on $L$ and $0$-shifted contact structures on $L'$.
\end{theo}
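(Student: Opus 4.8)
The plan is to isolate two ingredients. First, that $F^\ast$ restricts to a bijection from the $\partial$-closed $L_M$-valued $1$-forms on $M$ onto the $\partial$-closed $L'_N$-valued $1$-forms on $N$, equivalently that $F^\ast$ induces an isomorphism on the degree-zero cohomology of the complex \eqref{eq:complex_valuedform} in form-degree $1$. Second, that a $\partial$-closed $\theta\in\Omega^1(M,L_M)$ is a $0$-shifted contact structure on $L$ if and only if $\theta':=F^\ast\theta$ is a $0$-shifted contact structure on $L'$. Granting these, the theorem follows at once: $F^\ast$ commutes with $\partial$ (being a pullback along the face maps of the nerve $L^{(\bullet)}$), so by the first ingredient it induces a bijection between the $\partial$-closed $1$-forms on $M$ and those on $N$, and by the second this bijection restricts to a bijection between the respective sets of $0$-shifted contact structures.

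The second ingredient is essentially already in hand. Fix a connection $\nabla$ in $L_M$ and set $\nabla':=f^\ast\nabla$. For every $x\in N$, Proposition \ref{prop:mor_curv_mor_inv} exhibits the commutative diagram \eqref{diag:mor_curv_mor_inv}, whose two vertical cochain maps are the Morita curvatures of $\theta'$ at $x$ and of $\theta$ at $f(x)$ (for $\nabla'$ and $\nabla$), and whose top and bottom diagonals are quasi-isomorphisms; passing to cohomology, either of these two cochain maps is a quasi-isomorphism if and only if the other is. By Definition \ref{def:0-shift_cont} this means that $\theta'$ is a $0$-shifted contact structure on $L'$ precisely when the Morita curvature of $\theta$ is a quasi-isomorphism at every point of $f(N)$. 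It remains to promote ``every point of $f(N)$'' to ``every point of $M$''. For this, recall from Remark \ref{rem:MC_RUTH} (diagram \eqref{eq:mor_curv_orbit}) that the Morita curvatures of $\theta$ at two points of the same $G$-orbit are related by isomorphisms in cohomology; hence the set of points of $M$ at which the Morita curvature of $\theta$ is a quasi-isomorphism is a union of $G$-orbits. On the other hand $f$, being a Morita map, is in particular essentially surjective, so $f(N)$ meets every $G$-orbit. Therefore the Morita curvature of $\theta$ is a quasi-isomorphism on $f(N)$ if and only if it is so on all of $M$, which is exactly the second ingredient.

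The main obstacle is the first ingredient, i.e.\ the line-bundle-valued counterpart of the classical fact (\cite[Corollary 3]{B2004}) that a Morita map induces a quasi-isomorphism on the $\partial$-complex of plain differential forms. (Via the $\partial$-equivariant splitting \eqref{eq:components} of Remark \ref{rem:partial_components}, which presents \eqref{eq:complex_valuedform} as a direct summand of the Atiyah complex \eqref{eq:Atiyah_complex}, this is equivalent to the analogous statement for Atiyah forms, so one is free to prove whichever is more convenient.) I would obtain it by adapting the proof of the plain case: by the usual reduction of Morita-invariance statements, it suffices to treat the case in which $F$ covers the pullback $f\colon H\to G$ of $G$ along a surjective submersion $\mathcal{U}\to M$, and for such an $F$ the \v{C}ech--de Rham / partition-of-unity argument that proves \cite[Corollary 3]{B2004} carries over with coefficients in $L$ essentially verbatim, because $F$ is a fibrewise isomorphism and every face map of $L^{(\bullet)}$ is an LB morphism, so that pullbacks of $L$-valued forms along them are defined and behave well. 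With the first ingredient in place, combining it with the preceding paragraph gives the claimed bijection.
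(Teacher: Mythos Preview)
Your proof is correct and follows essentially the same two-ingredient structure as the paper. For the second ingredient (preservation of non-degeneracy) your argument is identical to the paper's: Proposition \ref{prop:mor_curv_mor_inv}, the orbit-invariance from Remark \ref{rem:MC_RUTH}, and essential surjectivity of $f$. For the first ingredient (bijection on $\partial$-closed $1$-forms), the paper simply invokes \cite[Theorem 8.8]{Drummond:DifferentialformsinVBG}, whereas you sketch a direct proof by reducing to pullback groupoids along surjective submersions and adapting the \v Cech--de Rham argument behind \cite[Corollary 3]{B2004}; your sketch is sound (the face maps of $L^{(\bullet)}$ are LB morphisms, so the usual homotopy operators go through with $L$-coefficients), but in the final write-up it is cleaner to cite Drummond--Egea and avoid the detour.
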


\begin{proof}
According to \cite[Theorem 8.8]{Drummond:DifferentialformsinVBG} the assignment $\theta \mapsto F^\ast \theta$ establishes a bijection between $\partial$-closed $L_M$-valued $1$-forms on $M$ and $\partial$-closed $L'_N$-valued $1$-forms on $N$. It remains to prove that $\theta$ is a $0$-shifted contact structure if and only if so is $F^\ast \theta$.  But this immediately follows from Proposition \ref{prop:mor_curv_mor_inv}, the last part of Remark \ref{rem:MC_RUTH} (see diagram \eqref{eq:mor_curv_orbit}), and the essential surjectivity of $f$.
\end{proof}

\begin{rem}
	For the last part of the previous proof, one can avoid using diagram \eqref{eq:mor_curv_orbit} and RUTHs, by working only with Morita maps which are surjective and submersive on objects. This is always possible as the latter (like generic Morita maps) do also generate Morita equivalence (see, e.g., \cite[Theorem 1.73]{Li2014}). 
\end{rem}

\begin{rem}
From Theorem \ref{theo:Mor_inv_0-shift} a $0$-shifted contact structure on an LBG $L \rightrightarrows L_M$ can actually be regarded as a structure on the LB stack $[L_M / L]$.
\end{rem}
	
	\begin{rem}
	If $\theta_x\neq 0$ for all $x\in M$, then, as discussed in Remark \ref{rem:Mor_ker_theta_neq_0}, the Morita kernel can be replaced by~\eqref{eq:ker_RUTH}, with its RUTH structure, up to quasi-isomorphisms. Accordingly the definition of $0$-shifted contact structure simplifies in this case. Namely, the standard curvature can also be seen as a (necessarily strict) RUTH morphism:
		\begin{equation}\label{eq:Mor_curv_theta_neq_0}
		\begin{tikzcd}
			0 \arrow[r] & A \arrow[r, "\rho"] \arrow[d] & K_{\theta} \arrow[r] \arrow[d, "R_{\theta}"'] & 0 \arrow[r] \arrow[d] & 0 \\
			0 \arrow[r] & 0 \arrow[r] & K_{\theta}^{\dag} \arrow[r, "\rho^{\dag}"'] & A^{\dag} \arrow[r] & 0
		\end{tikzcd}.
	\end{equation}

	To show this, denote by $\{R^T_k\}_{k \geq 0}$ the structure operators of the adjoint RUTH and by $\{R^{T^\dag}_k\}_{k \geq 0}$ the structure operators of the $L$-twisted dual RUTH. By degree reasons, it is enough to prove that, for any $g \colon x\to y$ in $G$
	\[
	R_\theta \circ R^T_1(g) = R^{T^\dag}_1 (g) \circ R_\theta \colon K_{\theta, x} \to K^\dag_{\theta, y}. 
	\]
	This boils down to the following identity
	\begin{equation}\label{eq:R_RUTH}
	d^\nabla \theta \big(v, dt (h_{g^{-1}}(v'))\big) = g^{-1}.d^\nabla \theta \big(dt(h_{g}(v)), v'\big)
	\end{equation}
	for all $v \in K_{\theta, x}$ and all $v' \in K_{\theta, y}$, where $h$ is the Ehresmann connection defining the adjoint RUTH. To prove \eqref{eq:R_RUTH}, let $\omega \rightleftharpoons (\theta, 0) \in \OA^2 (L_M)$, let $\delta \in D_x L_M, \delta' \in D_y L_M$ be such that $\sigma (\delta) = v, \sigma (\delta') = v'$, and let $h^D \colon s^\ast DL_M \to DL$ be as in Appendix \ref{app:RUTH} \eqref{eq:SES_Appendix}. Now, using Equation \eqref{eq:omegaandcomponents}, and $\partial \theta = 0$ (hence $\partial \omega = 0$) we find:
	\begin{align*}
		d^\nabla \theta \big(v, dt (h_{g^{-1}}(u))\big) &= \omega \big(\delta, Dt(h^D_{g^{-1}}(\delta'))\big)=\omega\big(Dt(\tilde \delta),  Dt(h^D_{g^{-1}}(\delta'))\big) \\
		&=t\big((t^\ast \omega)(\tilde \delta,h^D_{g^{-1}}(\delta'))\big) =t\big((s^\ast \omega)(\tilde \delta,h^D_{g^{-1}}(\delta'))\big) \\
		&=g^{-1}.\omega\big(Ds(\tilde \delta),Ds(h^D_{g^{-1}}(\delta'))\big) = g^{-1}.\omega\big(Dt(h^D_g(\delta)),\delta'\big) \\
		&=g^{-1}. d^\nabla \theta \big(dt(h_{g}(v)), v'\big)
	\end{align*}
	where $\tilde\delta= Di(h^D_g(\delta)) \in D_{g^{-1}}L$ and we used that $u,v\in K_{\theta}$.
	
	The RUTH morphism $R_\theta$ is related to the Morita curvature by a quasi-isomorphism:
	\[
	{\scriptsize
	\begin{tikzcd}
		0 \arrow[rr] & & A_x \arrow[rr] \arrow[dd] \arrow[dr, shift left=0.3ex, dash] \arrow[dr, shift right=0.3ex, dash] & & K_{\theta,x} \arrow[rr] \arrow[dd, pos=0.2, swap, "R_{\theta}"] \arrow[dr, "\mathrm{in}"] & & 0 \arrow[rr] \arrow[dd] \arrow[dr] & & 0 \\
		& 0 \arrow[rr, crossing over] & & A_x \arrow[rr, crossing over] & & T_xM \arrow[rr, crossing over] & & L_{M,x} \arrow[rr] & & 0 \\
		0 \arrow[rr] & & 0 \arrow[rr] & & K_{\theta,x}^{\dagger} \arrow[rr] & & A_x^{\dagger} \arrow[rr] & & 0 \\
		& 0 \arrow[rr] & & \mathbbm{R} \arrow[from=uu, crossing over, pos=0.2, "F_\nabla"] \arrow[rr] \arrow[ul] & & T_x^{\dagger}M \arrow[from=uu, crossing over, pos=.2, "d^\nabla \theta"] \arrow[rr] \arrow[ul, "\mathrm{in}^\dag"] & & A^{\dagger}_x \arrow[from=uu, crossing over, pos=0.2, "F_{\nabla}^\dag"] \arrow[rr] \arrow[ul, shift left=0.3ex, dash] \arrow[ul, shift right=0.3ex, dash] & & 0 
	\end{tikzcd}}
	\]
	showing that $\theta$ is a $0$-shifted contact structure if and only if the vertical arrows in \eqref{eq:Mor_curv_theta_neq_0} form a quasi-isomorphism, i.e.~$\rho$ is injective, hence $G$ is a \emph{foliation groupoid}, and $\im \rho = \ker R_\theta$.
	Notice however, that there are $0$-shifted contact structures $\theta$ for which $\theta_x = 0$ for some $x$ (but not all $x$, see Example \ref{exmpl:0-shifted_nonreg} below).
	\end{rem}
	
	The last result of this section is a bijection between $0$-shifted contact structures and $0$-shifted symplectic Atiyah forms which, in our opinion, represents a strong motivation for Definition \ref{def:0-shift_cont}.
	
	\begin{theo}\label{theor:0-contact=0-Atiyah}
		Let $(L\rightrightarrows L_M; G\rightrightarrows M)$ be an LBG. The assignment $\theta\mapsto \omega \rightleftharpoons (\theta,0)$ establishes a bijection between $0$-shifted contact structures and $0$-shifted symplectic Atiyah forms on $L$.
	\end{theo}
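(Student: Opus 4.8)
The plan is to reduce everything to the components formalism together with a single connection-dependent identification of mapping cones. First observe that the assignment $\theta\mapsto\omega\rightleftharpoons(\theta,0)$ is just the restriction of the components isomorphism \eqref{eq:components} to the subspace $\{\omega_1=0\}$; since an Atiyah $2$-form is $\dA$-closed exactly when its component $\omega_1$ vanishes, the image of this restriction is precisely the set of $\dA$-closed Atiyah $2$-forms, and by Remark \ref{rem:partial_components} one has $\partial\omega\rightleftharpoons(\partial\theta,0)$, so $\partial\omega=0$ if and only if $\partial\theta=0$. Hence $\theta\mapsto\omega$ already restricts to a bijection between $\{\theta\in\Omega^1(M,L_M):\partial\theta=0\}$ and $\{\omega\in\OA^2(L_M):\partial\omega=0,\ \dA\omega=0\}$, and the whole content of the theorem boils down to showing that, for each $x\in M$, the cochain map \eqref{eq:0-Atiyah} attached to $\omega$ is a quasi-isomorphism if and only if the Morita curvature \eqref{eq:0moritakernel} of $\theta$ is a quasi-isomorphism.

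To prove this equivalence I would fix once and for all a connection $\nabla$ in $L_M$ — the very one used to define the Morita curvature. It splits the Spencer sequence of $L_M$, giving vector bundle isomorphisms $DL_M\cong TM\oplus\mathbbm{R}_M$, $\delta\mapsto(\sigma(\delta),f_\nabla(\delta))$, and, by dualizing with respect to the $L_M$-valued pairing, $J^1L_M=\operatorname{Hom}(DL_M,L_M)\cong L_M\oplus T^\dag M$, $\phi\mapsto(\phi(\mathbbm{I}),\phi\circ\nabla)$. Since a cochain map is a quasi-isomorphism exactly when its mapping cone is acyclic, I pass to mapping cones: that of \eqref{eq:0-Atiyah} at $x$ is the four-term complex $A_x\xrightarrow{-\mathcal{D}}D_xL_M\xrightarrow{\omega}J^1_xL_M\xrightarrow{\mathcal{D}^\dag}A^\dag_x$, and that of \eqref{eq:0moritakernel} is $A_x\to T_xM\oplus\mathbbm{R}\to L_{M,x}\oplus T^\dag_xM\to A^\dag_x$ with differentials read off from \eqref{eq:0moritakernel}. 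The claim to verify is that $(\operatorname{id}_{A_x},\Phi_x,\Psi_x,\operatorname{id}_{A^\dag_x})$, with $\Phi_x,\Psi_x$ the two splittings above, is an isomorphism between these cones. This is three squares. The outer-left square is the pair of identities $\sigma\circ\mathcal{D}=\rho$ and $F_\nabla=f_\nabla\circ\mathcal{D}$. The outer-right square follows by writing $\mathcal{D}(a)=\nabla_{\rho(a)}+F_\nabla(a)\,\mathbbm{I}$ in the splitting of $D_xL_M$ and pairing a $1$-jet against it, which gives $\mathcal{D}^\dag(\phi)=F_\nabla^\dag(\phi(\mathbbm{I}))+\rho^\dag(\phi\circ\nabla)$. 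The middle square is where the work sits: evaluating $\omega\rightleftharpoons(\theta,0)$ via Proposition \ref{prop:ker_omega_comp}, i.e.\ formula \eqref{eq:omegaandcomponents}, on $\delta'=\nabla_{v'}$ and on $\delta'=\mathbbm{I}$ yields $\omega(\delta,\nabla_{v'})=d^\nabla\theta(\sigma(\delta),v')+f_\nabla(\delta)\,\theta(v')$ and $\omega(\delta,\mathbbm{I})=-\theta(\sigma(\delta))$, which, read through $\Phi_x$ and $\Psi_x$, is exactly the middle differential $(v,r)\mapsto(-\theta(v),\,d^\nabla\theta(v)+\theta^\dag(r))$ of the Morita-curvature cone.

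Granting this identification, the two mapping cones are isomorphic at every $x\in M$, hence one is acyclic at $x$ precisely when the other is; this gives the equivalence of the two non-degeneracy conditions and, combined with the first paragraph, the asserted bijection. I expect the only genuinely non-formal ingredient to be the middle-square computation: it is short, but it carries all the geometry, and it is really the Lie-groupoid-fiberwise incarnation of the manifold-level correspondence of Theorem \ref{prop:corrispondenza}. The one bookkeeping point to watch is that the sign and degree conventions for the mapping cones must be chosen consistently with those implicit in Definition \ref{def:MK} and in the construction of the Morita curvature, so that the two cones match on the nose rather than up to an overall shift or sign; choosing the cone of \eqref{eq:theta_cm} so as to reproduce \eqref{eq:Morita0-kernel} fixes this.
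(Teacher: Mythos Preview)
Your proof is correct and follows essentially the same approach as the paper: both fix a connection $\nabla$, use the induced splittings $DL_M\cong TM\oplus\mathbbm{R}_M$ and $J^1L_M\cong T^\dag M\oplus L_M$, and observe that under these identifications the mapping cone of \eqref{eq:0-Atiyah} coincides with the mapping cone of the Morita curvature \eqref{eq:0moritakernel}, whence one is a quasi-isomorphism iff the other is. You are slightly more explicit than the paper in separating out the preliminary bijection between $\partial$-closed $\theta$'s and $\partial$-closed $\dA$-closed $\omega$'s, and in checking the three squares individually, but the substance is the same.
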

	
	\begin{proof}
		Choose once for all a connection $\nabla$ on $L_M$. We have to prove that the Morita curvature of $\theta$ is a quasi-isomorphism at the point $x\in M$ if and only if the cochain map \eqref{eq:0-Atiyah} is a quasi-isomorphism. Interestingly, the mapping cones of the Morita curvature and that of \eqref{eq:0-Atiyah} do actually agree. Namely, the mapping cone of \eqref{eq:0-Atiyah} is
		\begin{equation}
			\begin{tikzcd}\label{eq:map_con_omega}
				0 \arrow[r] & A_x \arrow[r, "-\mathcal{D}"] & D_xL_M \arrow[r, "\omega"] & J^1_xL_M \arrow[r, "\mathcal{D}^{\dag}"] & A_x^{\dag} \arrow[r] & 0
			\end{tikzcd}.
		\end{equation}
		Under the direct sum decomposition $DL_M \cong TM\oplus\mathbbm{R}$, $\delta\mapsto (\sigma(\delta), f_{\nabla}(\delta))$, the map $\mathcal{D}\colon A \to DL_M$ becomes
		\begin{equation*}
			(\rho, F_\nabla)\colon A \to TM\oplus \mathbbm{R}, \quad a\mapsto \big(\rho(a), F_\nabla(a)\big),
		\end{equation*}
		and $\omega \colon DL_M \to J^1 L_M $ becomes
		\begin{equation*}
			\begin{pmatrix}
				d^{\nabla}\theta & \theta^{\dag} \\
				-\theta &0
			\end{pmatrix}
			\colon TM\oplus \mathbbm{R} \to T^{\dag}M \oplus L_{M}.
		\end{equation*}
		Hence, the mapping cone \eqref{eq:map_con_omega} becomes
		\begin{equation}\label{eq:mc_mc}
		{\footnotesize
			\begin{tikzcd}[ampersand replacement=\&,column sep=huge,row sep=large]
				0 \arrow[r] \& A_x \arrow[r, "{(-\rho, -F_\nabla)}"] \& T_x M \oplus \mathbbm{R} \arrow[r, 
				"
				{\left(\begin{smallmatrix}
				d^\nabla \theta & \theta^\dag \\
				-\theta & 0
			\end{smallmatrix}\right)}
			"
				] \& T_x^{\dag}M\oplus L_{M,x} \arrow[r, "\rho^\dag + F_\nabla^\dag"] \& A_x^\dag \arrow[r] \& 0
			\end{tikzcd}
			},
		\end{equation}

		which is exactly the mapping cone of the Morita curvature of $\theta$ at the point $x\in M$. As the mapping cone is acyclic if and only if the cochain map is a quasi-isomorphism, this concludes the proof.	\end{proof} 
		
		\begin{rem}
	Let $(L\rightrightarrows L_M; G \rightrightarrows M)$ be an LBG equipped with a $0$-shifted contact structure $\theta$ and let $\omega$ be the corresponding $0$-shifted symplectic Atiyah form. It follows from the exactness of the sequence \eqref{eq:map_con_omega} and from $\omega$ being skew-symmetric, that $\mathbf{d} := 2 \dim M - \dim G$ is odd. The integer $\mathbf d$ is sometimes referred to as the \emph{dimension of the differentiable stack $[M/G]$} \cite{Xu:stacks}. When $G$ is a foliation groupoid, then $\mathbf d$ agrees with the dimension of the leaf space. 
	\end{rem}

	\begin{rem}\label{rem:hom_0-shift_sympl_grp}
	Let $Q \rightrightarrows P$ be a Lie groupoid and let $h$ be a principal action of $\mathbbm R^\times$ on $Q$ by Lie groupoid isomorphisms. Let $G = Q/\mathbbm R^\times$, and $M = P/\mathbbm R^\times$. It is clear that $G \rightrightarrows M$ is a Lie groupoid. Moreover, the line bundles $L \to G, L_M \to M$ associated to the principal $\mathbbm R^\times$-actions on $Q, P$ fit in an obvious LBG $(L \rightrightarrows L_M; G \rightrightarrows M)$. Now let $\theta \in \Omega^1 (M, L_M)$ and $\Theta \in \Omega^1 (P)$ be as in Remark \ref{rem:hom_sympl}. The assignment $\theta \mapsto d\Theta$ establishes a bijection between $0$-shifted contact structures on $L$ and homogeneous $0$-shifted symplectic structures of degree $1$ on $Q \rightrightarrows P$. This can be easily seen, e.g., using Theorem \ref{theor:0-contact=0-Atiyah} and the relationship between degree $1$ homogeneous differential forms and Atiyah forms, for which we refer the reader to \cite{Vitagliano:holomorphic}.
	\end{rem}
	
	We conclude this section with some examples.
	
	\begin{example}\label{exmpl:0-shifted_nonreg}
	Unlike for $0$-shifted symplectic structures, a Lie groupoid supporting a $0$-shifted contact structure $\theta$ needs not be regular. The rank of the anchor drops exactly at points $x$ where $\theta_x = 0$. Here we discuss a $0$-shifted contact structure $\theta$ such that $\theta_x$ is generically non-zero, but $\theta_x = 0$ along a submanifold (of positive codimension). Begin with $M = \mathbbm R^2$ with standard coordinates $(x, y)$ and the vector field $X = y \tfrac{\partial}{\partial y}$. Denote by $A \to M$ the Lie algebroid defined as follows. Set $A = \mathbbm R_{M} = \mathbbm R^2 \times \mathbbm R$, the anchor $\rho \colon A \to TM$ maps $1_M$, the constant function equal to $1$, to the vector field $X$, and the Lie bracket of two functions $f, g \in \Gamma (A) = C^\infty (M)$ is
	\[
	[f, g] := f X(g) - g X(f).
	\]
	The Lie algebroid $A$ is integrable (as all rank $1$ Lie algebroids) and it integrates to the Lie groupoid $G \rightrightarrows M$ defined by the flow $\Phi^X \colon (x, y; \varepsilon) \mapsto (x, e^\varepsilon y)$ of $X$: $G = M \times \mathbbm R$ with coordinates $(x, y; \varepsilon)$. Then
	\[
	s(x, y; \varepsilon) = (x, y), \quad t(x, y; \varepsilon) = (x, e^\varepsilon y), \quad m\big((x', e^{\varepsilon'} y'; \varepsilon), (x', y'; \varepsilon')\big) = (x', y'; \varepsilon + \varepsilon'), 
	\]
	and, moreover
	\[
	u (x, y) = (x, y; 0), \quad i(x, y; \varepsilon) = (x, e^\varepsilon y; - \varepsilon).
	\]
	We let $G$ act on the trivial line bundle $\mathbbm R_M$ with coordinates $(x, y; r)$ via
	\[
	(x, y; \varepsilon). (x,y; r) := (x, e^\varepsilon y; e^{\varepsilon}r).
	\]
	The infinitesimal action $\mathcal D \colon A \to D \mathbbm R_M$ then maps $1_M$ to $\mathcal D_{1_M} = y \tfrac{\partial}{\partial y} - \mathbbm I$. In particular $\mathcal D$ is injective. The $\mathbbm R_M$-valued $1$-form
	\[
	\theta = ydx \otimes 1_M
	\]
	is a $0$-shifted contact structure on the LBG $G \ltimes \mathbbm R_M$. To see this, first notice that $\theta$ is invariant under the $G$-action on $\mathbbm R_M$. Equivalently, $\partial \theta = 0$. For the ``non-degeneracy of the Morita curvature'' we prefer to use Theorem \ref{theor:0-contact=0-Atiyah}. So let $\omega \rightleftharpoons (\theta, 0)$ be the $\partial$-closed Atiyah $2$-form on $\mathbbm R_M$ corresponding to $\theta$. It is easy to see that
	\[
	\omega = (dy \wedge dx - y dx \wedge \mathbbm I^\ast)  \otimes 1_M
	\]
	where we are denoting by $(dx, dy, \mathbbm I^\ast)$ the basis of $\Gamma ((D \mathbbm{R}_M)^\ast)$ dual to the basis $(\tfrac{\partial}{\partial x}, \tfrac{\partial}{\partial y}, \mathbbm I)$ of $\Gamma (D\mathbbm R_M)$. It follows that
	\[
	\ker \omega = \operatorname{Span} (\mathcal D_{1_M}) = \im \mathcal D. 
	\]
	We conclude that $\omega$ is a $0$-shifted symplectic Atiyah form, hence $\theta$ is a $0$-shifted contact structure. Clearly $\theta_{(x, y)} = 0$ when $y = 0$.
		\end{example}
		
		The next example is a significant generalization of Example \ref{exmpl:0-shifted_nonreg} and has been suggested to us by one of the referees. 
		
		\begin{example}
		Let $h$ be an action of $\mathbbm R^\times$ on a manifold $P$.
		Denote by $\mathcal E$ the infinitesimal generator of $h$. The Lie group $\mathbbm R^\times$ acts on the trivial line bundle $\mathbbm R_P$ as follows: $\varepsilon.(p, r) := (h_\varepsilon(p), \varepsilon r)$, and we will need to consider both the action groupoid $G := \mathbbm R^\times \ltimes P \rightrightarrows P$ and the action LBG $L := \mathbbm R^\times \ltimes \mathbbm R_P \rightrightarrows \mathbbm R_P$. We know from Remark \ref{rem:hom_sympl} that, when $h$ is a principal action, then degree $1$ homogeneous symplectic forms on $P$ correspond bijectively to contact forms on $P/\mathbbm R^\times$ with values in the line bundle $\mathbbm R_P / \mathbbm R^\times$.
		We want to show that, when the action $h$ is \emph{not} principal, yet degree $1$ homogeneous symplectic forms on $P$ correspond bijectively to $0$-shifted contact structures on $L$. To see this begin with a $1$-form $\theta \in \Omega^1 (P) = \Omega^1 (P, \mathbbm R_P)$. The condition $\partial \theta$ is equivalent to $\theta$ being homogeneous of degree $1$ (beware that here $\partial$ is the simplicial differential on forms with values in the nerve of $L$, not the simplicial differential on ordinary forms). In this case $\omega = d \theta \in \Omega^2 (P)$ is also homogeneous of degree $1$, and $\theta = i_{\mathcal E} \omega$. Fix the trivial connection $\nabla$ on $\mathbbm R_P$ and notice that, with this choice, the sequence \eqref{eq:mc_mc} boils down to
		\begin{equation}
		{\footnotesize
			\begin{tikzcd}[ampersand replacement=\&,column sep=huge,row sep=large]
				0 \arrow[r] \& \mathbbm R \arrow[r, "{(-\mathcal E, \operatorname{id}_{\mathbbm R})}"] \& T_x P \oplus \mathbbm{R} \arrow[r, 
				"
				{\left(\begin{smallmatrix}
				\omega & \theta \\
				-\theta & 0
			\end{smallmatrix}\right)}
			"
				] \& T_x^{\ast}P\oplus \mathbbm R \arrow[r, "i_{\mathcal E} - \operatorname{id}_{\mathbbm R}"] \& \mathbbm R \arrow[r] \& 0
			\end{tikzcd}
			},
		\end{equation}
	which is exact if and only $\omega$ is non-degenerate. This shows that the assignment $\theta \mapsto d\theta$ establishes a bijection between $0$-shifted contact structures on $L$ and degree $1$ homogeneous symplectic structures on $P$ whose inverse is given by $\omega \mapsto i_{\mathcal E} \omega$. 	
		\end{example}
		
		\begin{example}
		Let $L_B \to B$ be a line bundle. The unit groupoid $L_B \rightrightarrows L_B$ is an LBG over the unit groupoid $B \rightrightarrows B$. A $0$-shifted contact structure on $L_B \rightrightarrows L_B$ is the same as an ordinary $L_B$-valued contact $1$-form $\theta \in \Omega^1 (B, L_B)$.  Now, let $M \to B$ be a surjective submersion, and let $G = M \times_B M \rightrightarrows M$ be the corresponding submersion groupoid. Set $L_M = M \mathbin{\times_B} L_B$. Then $G$ acts on $L_M$ in the obvious way. The associated action groupoid $L = G \ltimes L_M$ is an LBG over $G$. Moreover, the projection $\pi \colon L \to L_B$ is actually a VB Morita map onto the unit LBG. It follows from Theorem \ref{theo:Mor_inv_0-shift} that the $0$-shifted contact structures on $L$ are exactly the pull-backs $\pi^\ast \theta$ of some $L_B$-valued contact $1$-form $\theta$ on $B$. In other words, those $\theta \in \Omega^1 (M, L_M)$ for which there is a well-defined \emph{contact reduction} under the projection $L_M \to L_B$.  
				\end{example}
	
	\begin{example}[Contact structures on orbifolds]
Let $G \rightrightarrows M$ be a proper and \'etale groupoid (in particular the isotropy groups $G_x$, $x \in M$, are finite). The orbit space $X := M/G$ is an orbifold and $G$ defines an orbifold atlas on $X$ as follows. Let $x \in M$, and let $U \subseteq M$ be an open neighborhood of $x$ such that the restricted groupoid $G_U = (s \times t)^{-1} (U \times U) \rightrightarrows U$ identifies with an action groupoid $G_x \ltimes U \rightrightarrows U$, where $G_x$ acts (linearly) on $U$ via a diffeomorphism $U \cong T_x M$. The projection $U \to U/G_U \subseteq X$, together with the $G_x$-action on $U$, can be seen as an orbifold chart, and $X$ is covered by such charts. If $U, V \to X$ are two such charts, and $x \in U \cap V$, then a \emph{chart compatibility} is provided by any open subset $W \subseteq (s \times t)^{-1}(U \times V)$ such that $s \colon W \to U$ and $t \colon W \to V$ are both embeddings around $x$. Finally, let $L \rightrightarrows L_M$ be an LBG over $G \rightrightarrows M$, so that the orbit space $L_X := L_M / L$ is a line bundle (in the category of orbifolds) over $X$. With this reminder, it should be clear that a $0$-shifted contact structure on $L$ is equivalent to a contact structure on $X$ (i.e.~a group invariant contact structure on each chart which is additionally preserved by chart compatibilities \cite[Definition 2.3.1]{Herr:contactorbifolds}) with normal line bundle given exactly by $L_X$. From Theorem \ref{theo:Mor_inv_0-shift} the same is true for any \emph{orbifold groupoid}, i.e.~a proper foliation groupoid, presenting the orbifold $X$ (remember that a groupoid is a foliation groupoid if and only if it is Morita equivalent to a proper, \'etale groupoid). We leave the obvious details to the reader. 
	\end{example}

\section{$+1$-Shifted Contact Structures}
\label{sec:contact}

In this section we introduce \emph{$+1$-shifted contact structures}. The discussion will parallel that of Section \ref{sec:0-shifted_cs}. Specifically, we will begin with $+1$-shifted symplectic Atiyah forms (Section \ref{sec:Atiyah_shifted}), and then pass to a more classical definition in terms of line bundle valued $1$-forms, paying again attention to all the aspects listed in the Introduction and their interaction with Morita equivalences. Given a \emph{multiplicative line bundle valued $1$-form} (aspects (A1)--(A2)), we define its \emph{Morita kernel} (aspect (A3) - Section \ref{sec:Morita_kernel}), and its \emph{Morita curvature} (aspect (A4) - Section \ref{sec:Morita_curvature}). Eventually, we will explain what does it mean for the Morita curvature to be ``non-degenerate'' in a Morita invariant way (aspect (A5)), and we will get our ultimate definition (Section \ref{sec:+1-def_exmpl}). We will conclude with two examples. Although the situation is more involved for $+1$-shifted contact structures, there are some features which are surprisingly simpler here than in the $0$-shifted case. Namely, the Morita kernel is a plain VBG and the Morita curvature is a plain VBG morphism in the $+1$-shifted case (no need to use RUTHs).

\subsection{$+1$-Shifted Symplectic Structures}\label{sec:def1shif}
First we recall the definition of $+1$-shifted symplectic structure \cite{Xu:momentum_maps,Getzler:shifted,Zhu:twisted}. We adopt the equivalent definition from \cite[Section 5.2]{DelHoyo:VBmorita} for the ``non-degeneracy'' condition. So, let $G \rightrightarrows M$ be a Lie groupoid with Lie algebroid $A$. A \emph{multiplicative form} on $G$ is a differential form $\omega\in \Omega^{\bullet}(G)$ such that $\partial \omega=0$. The sharp map of a multiplicative $2$-form $\omega\in \Omega^2(G)$, also denoted $\omega$, is a VBG morphism $\omega\colon (TG\rightrightarrows TM)\to (T^{\ast}G\rightrightarrows A^{\ast})$ between the tangent and the cotangent groupoids.
	\begin{definition}\label{def:+1-shift_sympl_grpd}
		A \emph{$+1$-shifted symplectic structure} on $G$ is a pair $(\omega, \Omega)$ consisting of a $2$-form $\omega\in \Omega^2(G)$ and a $3$-form $\Omega\in \Omega^3(M)$ such that $\partial \omega = 0$ (i.e.~$\omega$ is multiplicative), $d\omega=\partial \Omega$, $d\Omega=0$, and, moreover, $\omega\colon TG\to T^{\ast}G$ is a VB Morita map. The \emph{gauge transformation} of a $+1$-shifted symplectic structure $(\omega, \Omega)$ on $G$ by a $2$-form $\alpha \in \Omega^2 (M)$ is the $+1$-shifted symplectic structure $(\omega + \partial \alpha, \Omega + d \alpha)$. A \emph{$+1$-shifted symplectic groupoid} is a Lie groupoid equipped with a $+1$-shifted symplectic structure.
	\end{definition}

	From Theorem \ref{theo:caratterizzazioneVBmorita}, $\omega\colon TG\to T^{\ast}G$ is a VB Morita map if and only if the cochain map induced on fibers
	\begin{equation*}
		\begin{tikzcd}
			0 \arrow[r] & A_x \arrow[r, "\rho"] \arrow[d, "\omega"'] & T_xM \arrow[r] \arrow[d, "\omega"] &0 \\
			0 \arrow[r] & T_x^{\ast}M \arrow[r, "\rho^{\ast}"'] & A_x^{\ast} \arrow[r] &0
		\end{tikzcd}
	\end{equation*}
	is a quasi-isomorphism for all $x\in M$. Notice that, as the map $\omega\colon T_xM\to A^{\ast}_x$ is just the opposite of the transpose of the map $\omega\colon A_x\to T_x^{\ast}M$, then the latter is an isomorphism in cohomology if and only if so is the former.
	
	\begin{rem} \label{rem:cohom_class_1-shft_sympl}
	That the gauge tranformation $(\omega'=\omega + \partial \alpha,\Omega'= \Omega + d \alpha)$ of a $+1$-shifted symplectic structure $(\omega, \Omega)$ is a $+1$-shifted symplectic structure as well has been proved in \cite[Proposition 4.6]{Xu:momentum_maps}. It also follows from Theorem \ref{theo:caratterizzazioneVBmorita}. Indeed $\partial \omega'= 0$, $d\omega'=\partial\Omega'$ and $d\Omega'=0$. Moreover, the VBG morphism $\omega' \colon TG \to T^\ast G$ induces the following map on the fibers over $x \in M$:
	\[
	\begin{tikzcd}
			0 \arrow[r] & A_x \arrow[r, "\rho"] \arrow[d, "\omega' = \omega + \partial \alpha"'] & T_xM \arrow[r] \arrow[d, "\omega' = \omega + \partial \alpha"] &0 \\
			0 \arrow[r] & T_x^{\ast}M \arrow[r, "\rho^{\ast}"'] & A_x^{\ast} \arrow[r] &0
		\end{tikzcd},
	\]
	but $\partial \alpha = s^\ast \alpha - t^\ast \alpha \colon A_x \to T_x M$ vanishes on $\ker \rho$, showing that $\omega$ and $\omega'$ do actually induce the same map in the cohomology of the fibers in degree $-1$ (hence in degree $0$ as well).
	\end{rem}
	
	The notion of $+1$-shifted symplectic groupoid is Morita invariant in an appropriate sense (see \cite{Xu:momentum_maps, Cueca:shiftedstructures}, see also \cite{Mayrand}). First of all, let $f \colon H \to G$ be a Morita map between Lie groupoids and let $(\omega, \Omega)$ be a $+1$-shifted symplectic structure on $G$. Then $(f^\ast \omega, f^\ast \Omega)$ is a $+1$-shifted symplectic structure on $H$ (see, e.g., \cite[Lemma 2.28]{Cueca:shiftedstructures}). Now, two $+1$-shifted symplectic groupoids $(G_1, (\omega_1, \Omega_1)), (G_2, (\omega_2, \Omega_2))$ are \emph{symplectic Morita equivalent} if there exist a Lie groupoid $H$, and Morita maps 
	\[
	\begin{tikzcd}
		& H \arrow[dl, "f_1"'] \arrow[dr, "f_2"] \\
		G_1 & & G_2
	\end{tikzcd}
	\]
such that the $+1$-shifted symplectic structures $(f_1^\ast \omega_1, f_1^\ast \Omega_1), (f_2^\ast \omega_2, f_2^\ast \Omega_2)$ agree up to a gauge transformation. Symplectic Morita equivalence is an equivalence relation. Moreover, given a $+1$-shifted symplectic groupoid $(G_1, (\omega_1, \Omega_1))$ and a Morita equivalent Lie groupoid $G_2$, then there exists a $+1$-shifted symplectic structure $(\omega_2, \Omega_2)$ on $G_2$ such that $(G_1, (\omega_1, \Omega_1))$ and $(G_2, (\omega_2, \Omega_2))$ are symplectic Morita equivalent. The $+1$-shifted symplectic structure $(\omega_2, \Omega_2)$ is unique up to gauge transformations. These latter facts motivate the following

	\begin{definition}\label{def:+1-shift_sympl_stack}
		A \emph{$+1$-shifted symplectic structure} on the differentiable stack $[M/G]$ is a symplectic Morita equivalence class of $+1$-shifted symplectic groupoids $(G \rightrightarrows M, (\omega, \Omega))$ representing $[M/G]$. 
	\end{definition}
	
	\subsection{$+1$-Shifted Symplectic Atiyah Forms}\label{sec:Atiyah_shifted}

	First we translate Definitions \ref{def:+1-shift_sympl_grpd} and \ref{def:+1-shift_sympl_stack} to the realm of Contact Geometry using Atiyah forms. Take again an LBG $(L\rightrightarrows L_M; G \rightrightarrows M)$, and let $A$ be the Lie algebroid of $G$. Similarly as for ordinary differential forms, we say that an Atiyah form $\omega\in \OA^{\bullet}(L)$ is \emph{multiplicative} if $\partial \omega=0$. The sharp map of a multiplicative Atiyah $2$-form $\omega \in \OA^{\bullet}(L)$, also denoted $\omega$, is a VBG morphism $\omega\colon (DL\rightrightarrows DL_M) \to (J^1L\rightrightarrows A^{\dag})$ (this can be easily proved either by hands, or combining the analogous result for ordinary $2$-forms with the homogenization techniques of \cite[Section 2]{Vitagliano:holomorphic}).
	
	\begin{definition}
		A \emph{$+1$-shifted symplectic Atiyah form} on the LBG $L$ is a pair $(\omega, \Omega)$ consisting of an Atiyah $2$-form $\omega\in \OA^{2}(L)$ and an Atiyah $3$-form $\Omega\in \OA^3(L_M)$ such that $\partial \omega = 0$ (i.e.~$\omega$ is multiplicative), $\dA\omega=\partial \Omega$, $\dA\Omega=0$, and, moreover, $\omega\colon DL\to J^1L$ is a VB Morita map. The \emph{gauge transformation} of a $+1$-shifted symplectic Atiyah form $(\omega, \Omega)$ on $L$ by an Atiyah $2$-form $\alpha \in \OA^2 (L_M)$ is the $+1$-shifted symplectic Atiyah form $(\omega + \partial \alpha, \Omega + \dA \alpha)$. A \emph{$+1$-shifted symplectic Atiyah LBG} is an LBG equipped with a $+1$-shifted symplectic Atiyah form.
	\end{definition}

	From Theorem \ref{theo:caratterizzazioneVBmorita}, $\omega\colon DL\to J^1L$ is a VB Morita map if and only if the cochain map induced on fibers
	\begin{equation}\label{eq:+1_map_fiber_omega_Atiyah}
		\begin{tikzcd}
			0 \arrow[r] & A_x \arrow[r, "\mathcal{D}"] \arrow[d, "\omega"'] & D_xL_M \arrow[d, "\omega"] \arrow[r] &0 \\
			0 \arrow[r] & J^1_x L_M \arrow[r, "\mathcal{D}^{\dag}"'] & A_x^{\dag} \arrow[r] &0
		\end{tikzcd}
	\end{equation}
	is a quasi-isomorphism for all $x\in M$. The map $\omega\colon D_xL_M\to A_x^{\dag}$ is the opposite of the twisted transpose of the map $\omega\colon A_x\to J^1_x L_M$, hence the latter is an isomorphism in cohomology if and only if so is the former. 
	
	\begin{rem}
	That the gauge transformation $(\omega + \partial \alpha, \Omega + d \alpha)$ of a $+1$-shifted symplectic Atiyah form $(\omega, \Omega)$ is a $+1$-shifted symplectic Atiyah form  as well can be proved exactly as in the symplectic case, see Remark \ref{rem:cohom_class_1-shft_sympl}. Notice however that in the symplectic Atiyah case, the situation is actually simpler as every $+1$-shifted symplectic Atiyah form $(\omega, \Omega)$ can be gauge transformed into one of the type $(\omega', 0)$, hence $\dA \omega' = 0$, in other words $\omega' \rightleftharpoons (\theta, 0)$. Indeed, as the der-complex is acyclic, it follows from $\dA \Omega = 0$ that $\Omega = - \dA \alpha$ for some $\alpha \in \OA^2 (L_M)$. Now, gauge transform $(\omega, \Omega)$ by $\alpha$.
	\end{rem}
	
	The notion of $+1$-shifted symplectic Atiyah form is Morita invariant in an appropriate sense. Indeed, the following facts can be proved by a translation from the symplectic case. They are also a consequence of the results in Section \ref{sec:+1-def_exmpl} (in particular Theorem \ref{theor:+1_theta_omega_bij}). First of all, let $F \colon L' \to L $ be a VB Morita map between LBGs and let $(\omega, \Omega)$ be a $+1$-shifted symplectic Atiyah form on $L$. Then $(F^\ast \omega, F^\ast \Omega)$ is a $+1$-shifted symplectic Atiyah form on $H$. Now, two $+1$-shifted symplectic Atiyah LBGs $(L_1, (\omega_1, \Omega_1)), (L_2, (\omega_2, \Omega_2))$ are \emph{symplectic Morita equivalent} if there exist an LBG $L'$, and VB Morita maps 
	\[
	\begin{tikzcd}
		& L' \arrow[dl, "F_1"'] \arrow[dr, "F_2"] \\
		L_1 & & L_2
	\end{tikzcd}
	\]
such that the $+1$-shifted symplectic structures $(F_1^\ast \omega_1, F_1^\ast \Omega_1), (F_2^\ast \omega_2, F_2^\ast \Omega_2)$ agree up to a gauge transformation. Symplectic Morita equivalence is an equivalence relation. Moreover, given a $+1$-shifted symplectic Atiyah LBG $(L_1, (\omega_1, \Omega_1))$ and a VB Morita equivalent LBG $L_2$, then there exists a $+1$-shifted symplectic Atiyah form $(\omega_2, \Omega_2)$ on $L_2$ such that $(L_1, (\omega_1, \Omega_1))$ and $(L_2, (\omega_2, \Omega_2))$ are symplectic Morita equivalent. The $+1$-shifted symplectic Atiyah form $(\omega_2, \Omega_2)$ is unique up to gauge transformations. These latter facts motivate the following 
		
	\begin{definition}
		A \emph{$+1$-shifted symplectic Atiyah form} on the LB stack $[L_M/L]$ is a symplectic Morita equivalence class of $+1$-shifted symplectic Atiyah LBGs $(L \rightrightarrows L_M, (\omega, \Omega))$ representing $[L_M/L]$. 
	\end{definition}

\subsection{Morita Kernel}\label{sec:Morita_kernel}
	Our next aim is to provide a definition of $+1$-shifted contact structure. We will follow the same strategy as in Section \ref{sec:0-shifted_cs} and go through all aspects (A1)--(A5) of the definition of a contact form in the Introduction, paying attention to Morita invariance. We say that a form $\theta \in \Omega^{\bullet}(G,L)$ is \emph{multiplicative} if $\partial\theta=0$. This multiplicativity condition agrees with the multiplicativity condition for VB valued forms in \cite{Craini:multiplicativeforms} under the isomorphism $L\cong t^{\ast}L_M$.
Notice that, from Remark \ref{rem:partial_components}, an Atiyah form is multiplicative if and only if so are its components. 	
	
	Now let $\theta \in \Omega^1(G,L)$ be a multiplicative $1$-form (aspect (A2)). The kernel of $\theta$ is not a well-defined subgroupoid of $TG\rightrightarrows TM$ in general, because its dimension jumps at points $g\in G$ where $\theta_g=0$. However, interestingly, we can always make sense of $\ker \theta$ as a differentiable stack (aspect (A3)). Namely, similarly as we did for $0$-shifted contact structures, inspired again by the homotopy kernel in Homological Algebra, we construct a VBG playing the role of $\ker\theta$ up to Morita equivalences. The latter VBG is given by $(TG\oplus L \rightrightarrows TM\oplus L_M; G\rightrightarrows M)$ with structure maps defined as follows: source and target are given by
	\begin{align*}
		s(v,\lambda)=\big(s(v),s(\lambda)\big), \quad t(v,\lambda)=\big(t(v), t\big(\lambda +\theta(v)\big)\big), \quad v\in TG,\quad \lambda \in L,
	\end{align*}
	the unit map is given by
	\begin{equation*}
		u(v,\lambda)=\big(u(v), u(\lambda)\big), \quad v\in TM,\quad \lambda \in L_M,
	\end{equation*}
	the inversion is given by
	\begin{equation*}
		(v,\lambda)^{-1}= \left(v^{-1}, s_{g^{-1}}^{-1}\left(t\big(\lambda+ \theta(v)\big)\right)\right), \quad v\in T_gG, \quad \lambda\in L_g.
	\end{equation*}
	Finally the multiplication is given by
	\begin{equation*}
		(v,\lambda)(v',\lambda')=\left(vv', s_{gg'}^{-1}\big(s(\lambda')\big)\right), \quad (v,v')\in T_{(g,g')}G^{(2)}, \quad (\lambda, \lambda')\in L^{(2)}_{(g,g')}.
	\end{equation*}
	\begin{prop/def}
		For any multiplicative $1$-form $\theta\in \Omega^1(G,L)$, $TG\oplus L\rightrightarrows TM \oplus L_M$ with the structure maps defined above is a VBG over $G\rightrightarrows M$ that we call the \emph{Morita kernel} of $\theta$ and we also denote by $\mk_\theta$.
	\end{prop/def}
	
	The proof is a straightforward computation to check the VBG axioms. The core of the Morita kernel is $A$, the Lie algebroid of $G$, and the core-anchor is $(\rho, \ell_\theta)\colon A \to TM\oplus L_M$, where $\rho$ is the anchor and $\ell_\theta \colon A \to L_M$ is the restriction of $\theta$ to $A$. 
	
	\begin{rem}
	Remember from \cite{Gracia:VBgroupoids} that there is an equivalence of categories between VBGs and $2$-term RUTHs. One should compare the Morita kernel of a multiplicative $1$-form being a VBG with the Morita kernel of a $\partial$-closed $1$-form $\theta \in \Omega^1(M, L_M)$ being a $3$-term RUTH (Remark \ref{rem:MK_RUTH}). As already announced, the situation is surprisingly simpler in the $+1$-shifted case.
	\end{rem}
	
	The Morita kernel does only depend on the $\partial$-cohomology class of $\theta$ in the complex \eqref{eq:complex_valuedform} up to VB Morita equivalences in the sense of the following 
	\begin{prop}
		\label{prop:invarianceoftheta}
		Let $\theta$, $\theta' \in \Omega^1(G,L)$ be multiplicative and $\partial$-cohomologous $1$-form: $\theta - \theta' =\partial \alpha$ for some $\alpha \in \Omega^1(M,L_M)$. Then the map $\mathsf A \colon \mk_\theta \to \mk_{\theta'}$ defined by setting $\mathsf A(v,\lambda)= (v, \lambda +s^{\ast}\alpha(v))$ is a VB Morita map.
	\end{prop}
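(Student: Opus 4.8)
The plan is to check directly that $\mathsf A$ is a morphism of VB groupoids covering the identity of $G$, then to exhibit an explicit inverse for it, and finally to conclude that an invertible VB groupoid morphism is automatically a VB Morita map.

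First I would verify that $\mathsf A$ is a VBG morphism. It manifestly covers $\operatorname{id}_G$, and on the fiber over $g \in G$ it restricts to the linear isomorphism $(v, \lambda) \mapsto (v, \lambda + s^\ast \alpha(v))$ of $T_g G \oplus L_g$, with inverse $(v, \lambda) \mapsto (v, \lambda - s^\ast \alpha(v))$; on units it reduces to the VB isomorphism $(v, \lambda) \mapsto (v, \lambda + \alpha(v))$ of $TM \oplus L_M$. So $\mathsf A$ is automatically a regular VB morphism, and the only thing left is compatibility with the groupoid structure maps, for which it is enough to treat source, target and multiplication (the compatibilities with units and inversion being then automatic, or directly checked by the same token). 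Compatibility with the source should follow at once from $s \circ s^\ast \alpha = \alpha \circ ds$; compatibility with the multiplication should be equally immediate, since the $L$-component of a product $(v,\lambda)(v',\lambda')$ in $\mk_\theta$ involves only $s(\lambda')$, which $\mathsf A$ intertwines. The one computation that will actually use the hypothesis is compatibility with the target: applying the target of $\mk_{\theta'}$ to $\mathsf A(v,\lambda) = (v, \lambda + s^\ast\alpha(v))$ produces $t\big(\lambda + s^\ast\alpha(v) + \theta'(v)\big)$, which, using $t \circ t^\ast\alpha = \alpha \circ dt$ and the injectivity of $t$ on the fibers of $L$, should equal $t\big(\lambda + \theta(v)\big) + \alpha(dt(v))$ precisely because $\theta - \theta' = \partial \alpha = s^\ast\alpha - t^\ast\alpha$. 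The same identity, entering in the same way, will handle compatibility with the inversion.

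Next I would note that the map $\mathsf B \colon \mk_{\theta'} \to \mk_\theta$, $(v, \lambda) \mapsto (v, \lambda - s^\ast\alpha(v))$, is again a VBG morphism — it is the analogue of $\mathsf A$ for the pair $\theta', \theta$ and the form $-\alpha$, and $\theta' - \theta = \partial(-\alpha)$, so the previous paragraph applies verbatim — and that $\mathsf B$ is a two-sided inverse of $\mathsf A$. Hence $\mathsf A$ is an isomorphism of VB groupoids, and since an isomorphism of Lie groupoids is in particular a Morita map, $\mathsf A$ is a VB Morita map. One could instead appeal to Theorem \ref{theo:caratterizzazioneVBmorita}: $\mathsf A$ covers the Morita map $\operatorname{id}_G$, and the induced cochain map between the fibers over $x \in M$ is the identity in degree $-1$, because $s^\ast\alpha$ vanishes on the core $A$ of $\mk_\theta$, and the isomorphism $(v,\lambda) \mapsto (v,\lambda+\alpha(v))$ in degree $0$, so it is a quasi-isomorphism.

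I expect the main obstacle to be purely bookkeeping: keeping careful track of the fiberwise identifications $s_g, t_g \colon L_g \to L_{s(g)}, L_{t(g)}$ and of the convention for the pullback of line bundle valued forms, so that the identity $\theta - \theta' = \partial\alpha$ is recognised at the right spot in the target- and inversion-compatibility checks. Once the conventions are pinned down, each of these checks is a one-line computation, so there is no real difficulty beyond that.
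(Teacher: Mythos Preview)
Your proposal is correct. The paper takes only your \emph{alternative} route: after noting that $\mathsf A$ is a VBG morphism, it writes down the induced cochain map on fibers (identity on $A_x$, the shear $(v,\lambda)\mapsto(v,\lambda+\alpha(v))$ on $T_xM\oplus L_{M,x}$), observes that $\ell_{\theta'}=\ell_\theta+\alpha\circ\rho$ makes this a cochain map, and concludes via Theorem~\ref{theo:caratterizzazioneVBmorita}. Your primary argument---exhibiting the explicit inverse $\mathsf B(v,\lambda)=(v,\lambda-s^\ast\alpha(v))$ and concluding that a VBG isomorphism is trivially VB Morita---is more elementary and avoids invoking the del Hoyo--Ortiz criterion altogether. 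The paper's route, on the other hand, records the explicit fiberwise cochain map~\eqref{eq:Phi_alpha}, which is exactly what gets reused later (Proposition~\ref{prop:+1_MC_cohom_class}) when checking that the Morita curvatures of $\theta$ and $\theta'$ are intertwined by $\mathsf A$ and $\mathsf A^\dag$.
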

	\begin{proof}
		A straightforward computation shows that  $\mathsf A$ is a VBG morphism. The induced cochain map on the fibers over $x \in M$ is
		\begin{equation}\label{eq:Phi_alpha}
		\begin{tikzcd}
			0 \arrow[r] & A_x \arrow[r, "(\rho{,} \ell_\theta)"] \arrow[d, equal] & T_xM\oplus L_{M,x} \arrow[r] \arrow[d, "\mathsf A"] & 0 \\
			0 \arrow[r] & A_x \arrow[r, "(\rho{,} \ell_{\theta'})"'] & T_xM\oplus L_{M,x} \arrow[r]  & 0
		\end{tikzcd},
	\end{equation} 
	where $\mathsf A (v, \lambda) = (v, \lambda + \alpha (v))$, $v \in T_x M$, $\lambda \in L_{M, x}$. But, from $\theta - \theta' = \partial \alpha$, we get $\ell_{\theta'} = \ell_\theta + \alpha \circ \rho$, whence \eqref{eq:Phi_alpha} is a quasi-isomorphism, and the claim follows from Theorem \ref{theo:caratterizzazioneVBmorita}.
	\end{proof}
	
	The Morita kernel is Morita invariant in the sense of the following
	\begin{prop}\label{prop:+1_Mor_ker_Mor_inv}
		Let $(F,f)\colon (L'\rightrightarrows L'_N; H\rightrightarrows N)\to (L\rightrightarrows L_M; G\rightrightarrows M)$ be a VB Morita map between LBGs, and let $\theta\in \Omega^1(G,L)$ be a multiplicative $L$-valued $1$-form. Then the map $\mathsf F \colon \mk_{F^{\ast}\theta} \to \mk_{\theta}$ defined by setting $\mathsf F(v,\lambda)= (df(v), F(\lambda))$ is a VB Morita map.
	\end{prop}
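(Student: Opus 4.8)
The plan is to verify first that $\mathsf F$ is a well-defined VBG morphism covering $f$, and then to apply Theorem~\ref{theo:caratterizzazioneVBmorita} by checking that the cochain map induced by $\mathsf F$ on fibers is a quasi-isomorphism for every $x \in N$. Note that $\theta' := F^\ast\theta \in \Omega^1(H, L')$ is again multiplicative (pull-back commutes with $\partial$), so $\mk_{\theta'}$ is indeed defined.

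For the first point, recall that $F(\theta'(w)) = \theta(df(w))$ for every $w \in TH$, directly from the definition of the pull-back of vector bundle valued forms along the regular VB morphism $F$. Compatibility of $\mathsf F = df \oplus F$ with the source and the unit maps of $\mk_{\theta'}$ and $\mk_\theta$ is immediate, since $(df,f)\colon TH \to TG$ and $(F,f)\colon L' \to L$ are already Lie groupoid morphisms. Compatibility with the twisted target amounts to the chain of identities
\[
t\big(F(\lambda) + \theta(df(v))\big) = t\big(F(\lambda) + F(\theta'(v))\big) = t\big(F(\lambda + \theta'(v))\big) = F\big(t(\lambda + \theta'(v))\big)
\]
(together with $t(df(v)) = df(t(v))$), and compatibility with the twisted inversion and multiplication is checked in exactly the same way, combining $F \circ \theta' = \theta \circ df$ with the fact that $df$ and $F$ intertwine the structure maps of $TH, TG$ and of $L', L$. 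Since $df$ and $F$ are fibrewise linear over $f$, so is $\mathsf F$; hence $\mathsf F$ is a VBG morphism over $f$.

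For the second point, recall that the core of $\mk_\theta$ is the Lie algebroid $A_G$ of $G$, with core-anchor $(\rho_G, \ell_\theta)$, and likewise $\mk_{\theta'}$ has core $A_H$ and core-anchor $(\rho_H, \ell_{\theta'})$. Unwinding the definitions, the cochain map induced by $\mathsf F$ on the fibers over $x \in N$ and $f(x) \in M$ is
\[
\begin{tikzcd}
0 \arrow[r] & A_{H,x} \arrow[r, "(\rho_H{,}\ell_{\theta'})"] \arrow[d, "df"'] & T_xN \oplus L'_{N,x} \arrow[r] \arrow[d, "df \oplus F"] & 0 \\
0 \arrow[r] & A_{G,f(x)} \arrow[r, "(\rho_G{,}\ell_{\theta})"'] & T_{f(x)}M \oplus L_{M,f(x)} \arrow[r] & 0
\end{tikzcd}.
\]
Here is the key step: I would split off the degree-$0$ line bundle summands, fitting this diagram into a morphism of short exact sequences of cochain complexes. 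The inclusion of $L'_{N,x}$ (placed in degree $0$) into the top row exhibits the latter as an extension of the fiber $0 \to A_{H,x} \xrightarrow{\rho_H} T_xN \to 0$ of $TH$ over $x$ by the one-term complex $L'_{N,x}$; the analogous extension holds over $f(x)$, and $\mathsf F$ maps one extension to the other, restricting to $F_x\colon L'_{N,x} \to L_{M,f(x)}$ on the sub-complexes and to the fiber cochain map of $(df,f)\colon TH \to TG$ on the quotients. Now $F_x$ is an isomorphism because $F$ is an LB morphism, and the fiber cochain map of $(df,f)$ is a quasi-isomorphism because $f$ is a Morita map, so that $(df,f)\colon TH \to TG$ is VB Morita by \cite[Corollary 3.8]{DelHoyo:VBmorita} (and Theorem~\ref{theo:caratterizzazioneVBmorita}). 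Passing to the long exact sequences in cohomology and invoking the five lemma shows that $\mathsf F$ induces a quasi-isomorphism on fibers over every $x \in N$, and Theorem~\ref{theo:caratterizzazioneVBmorita} then yields that $\mathsf F$ is a VB Morita map.

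The only genuinely laborious part of the argument is the bookkeeping in the verification that $\mathsf F$ intertwines all the twisted structure maps of the two Morita kernels; once that is in place, the Morita property is purely formal homological algebra. As an alternative to the direct approach, one could factor $\mathsf F$ as $\mk_{\theta'} \to f^\ast\mk_\theta \to \mk_\theta$, the second arrow being the canonical projection, which is VB Morita since $f$ is Morita (\cite[Corollary 3.7]{DelHoyo:VBmorita}), and then run the same homological argument on the residual morphism $\mk_{\theta'} \to f^\ast\mk_\theta$; this, however, does not shorten the computation in any essential way.
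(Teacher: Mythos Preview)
Your proof is correct and follows essentially the same route as the paper's: both verify that $\mathsf F$ is a VBG morphism, invoke \cite[Corollary~3.8]{DelHoyo:VBmorita} to obtain that $(df,f)\colon TH\to TG$ is VB Morita, and then deduce the quasi-isomorphism on fibers of $\mk_{\theta'}$ and $\mk_\theta$ via Theorem~\ref{theo:caratterizzazioneVBmorita}. The only difference is that the paper leaves the last step as an ``easy check'' from $\ell_{F^\ast\theta}=\ell_\theta\circ df$, whereas you make it explicit by splitting off the line bundle summand as a subcomplex and applying the five lemma---a clean way to carry out exactly what the paper elides.
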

	\begin{proof}
		Denote by $A_G, A_H$ the Lie algebroids of $G,H$. From Theorem \ref{theo:caratterizzazioneVBmorita} and \cite[Corollary 3.8]{DelHoyo:VBmorita}, for all $x \in M$, the vertical arrows in
		\begin{equation}\label{eq:df_quasi_iso}
		\begin{tikzcd}
			0 \arrow[r] & A_{H,x} \arrow[r, "\rho_H"] \arrow[d, "df"']  & T_x N\arrow[r] \arrow[d, "df"] & 0 \\
			0 \arrow[r] & A_{G,f(x)} \arrow[r, "\rho_G"'] & T_x M\arrow[r]  & 0
		\end{tikzcd}
		\end{equation}
		form a quasi-isomorphism. As for $\mathsf F$, a straightforward computation shows that $\mathsf F$ is a VBG morphism. The induced cochain map on the fibers over $x \in N$ and $f(x) \in M$ is
		\begin{equation}\label{eq:F_quasi_iso}
		\begin{tikzcd}
			0 \arrow[r] & A_{H,x} \arrow[r, "(\rho_H{,} \ell_{F^\ast \theta})"] \arrow[d, "df"']  & T_x N\oplus L'_{N,x} \arrow[r] \arrow[d, "\mathsf F"] & 0 \\
			0 \arrow[r] & A_{G,f(x)} \arrow[r, "(\rho_G{,} \ell_{\theta})"'] & T_x M\oplus L_{M,x} \arrow[r]  & 0
		\end{tikzcd}
		\end{equation}
		But $\ell_{F^\ast \theta, x} = F_x^{-1}\circ \ell_{\theta, f(x)} \circ df$. It is easy to check from the latter formula, from the definition of $\mathsf F$, and from \eqref{eq:df_quasi_iso} being a quasi-isomorphism, that \eqref{eq:F_quasi_iso} is a quasi-isomorphism as well. The claim now follows from Theorem \ref{theo:caratterizzazioneVBmorita}.
	\end{proof}
	
	\begin{rem}\label{rem:+1_Mor_ker_regular}
	In the case when $\theta_g\neq 0$ for all $g\in G$, then $K_\theta := \ker \theta \rightrightarrows TM$ is a VB subgroupoid of $TG \rightrightarrows TM$ with core $C = A \cap \ker \theta$ (see, e.g., \cite[Lemma 3.6]{Craini:multiplicativeforms}). Actually, the inclusion $\mathrm{in}\colon K_{\theta} \to \mk_{\theta}$ is a VB Morita map. Indeed, it is a VBG morphism over the identity, which is a Morita map, and the induced cochain map on fibers over $x\in M$ is
		\begin{equation}
			\label{eq:inclusionMorita}
			\begin{tikzcd}
				0\arrow[r]& C_x\arrow[r, "\rho"] \arrow[d] & T_xM \arrow[d] \arrow[r] &0 \\
				0 \arrow[r] &A_x \arrow[r,"(\rho {,} \ell_\theta)"'] & T_xM \oplus L_{M,x} \arrow[r] &0
			\end{tikzcd},
		\end{equation}
		where the vertical arrows are the inclusions. Clearly \eqref{eq:inclusionMorita} is a quasi-isomorphism for all $x$. This motivates replacing $K_\theta$ by $\mk_\theta$ in the general case.
	\end{rem}
	
\subsection{Morita Curvature}\label{sec:Morita_curvature}
	Let $\theta \in \Omega^1 (G, L)$ be a multiplicative $1$-form. The next step is defining the ``curvature'' of $\theta$ in a Morita invariant way (aspect (A4)). First of all, applying the twisted dual VBG construction (Example \ref{ex:adjoint}) to $\mk_{\theta}$ we obtain the VBG $\mk_{\theta}^{\dag}:= T^{\dag}G\oplus \mathbbm{R}_G \rightrightarrows A^{\dag}$, where $\mathbbm{R}_G=G\times \mathbbm{R} \to G$ is the trivial line bundle over $G$. The structure maps of $\mk_\theta^\dag$ are explicitly given by:
	\begin{itemize}
		\item The source and the target of $(\psi,r)\in T_g^{\dag}G\oplus\mathbbm{R}$, $g \in G$, are given by
		\begin{align*}
			\big\langle s(\psi,r), a \big\rangle &= - s\big\langle \psi ,0_g \cdot a^{-1} \big\rangle - r \theta (a), & a\in A_{s(g)},\\
			\big\langle t(\psi,r), a' \big\rangle &= t \big\langle \psi , a' \cdot 0_g \big\rangle, & a'\in A_{t(g)}.
		\end{align*}
		\item The unit over $\psi\in A_x^{\dag}$, $x \in M$, is given by
		\begin{equation*}
			u(\psi)= (\psi \circ \pr_A, 0),
		\end{equation*}
		where $\pr_A \colon T_{x} G \to A_x$ is the projection with kernel $T_x M$.
		\item The multiplication between two composable arrows $(\psi,r)\in T_g^{\dag}G\oplus \mathbbm{R}$ and $(\psi',r')\in T_{g'}^{\dag}G \oplus \mathbbm{R}$, $(g, g') \in G^{(2)}$, is $(\Psi, r+r') \in T_{gg'}^{\dag}G \oplus\mathbbm{R}$ where $\Psi$ is given by
		\begin{equation*}
			\langle \Psi, vv'\rangle = s_{gg'}^{-1}\Big(g'^{-1} . s \langle \psi, v \rangle + s\big(r \theta (v') + \langle \psi', v' \rangle\big)\Big), \quad (v,v')\in T_{(g, g')}G^{(2)}.
		\end{equation*}
		\item The inverse of $(\psi, r)\in T_g^{\dag}G\oplus \mathbbm{R}$, $g \in G$, is $(\phi, -r)\in T_{g^{-1}}G\oplus\mathbbm{R}$ where
		\begin{equation*}
			\langle \phi, v\rangle = - s_{g^{-1}}^{-1}t\langle \psi, v^{-1} \rangle -r \theta (v), \quad v\in T_{g^{-1}}G.
		\end{equation*}
	\end{itemize}
	The core of $\mk_{\theta}^{\dag}$ is $T^{\dag}M\oplus \mathbbm{R}_M$ and the core-anchor is $ \rho^{\dag}+ \ell_\theta^{\dag}\colon T^{\dag}M\oplus \mathbbm{R}_M \to A^{\dag}$.
	
	It turns out that the role of the curvature is played, in this case, by an appropriate VBG morphism $\mc_\theta \colon \mk_{\theta}\to \mk_{\theta}^{\dag}$. In order to define $\mc_\theta$ we need a connection $\nabla$ on $L_M$. The difference $s^{\ast}\nabla - t^{\ast}\nabla$ between the pull-back connections on $L \to G$ along the source and the target $s, t \colon L \to L_M$ is a $1$-form $\eta_\nabla \in \Omega^1 (G)$.
	
	\begin{lemma}
	The $1$-form $\eta_{\nabla}$ is multiplicative. Moreover $\eta_\nabla |_A = F_\nabla$ (see Diagram \eqref{eq:fnabla} for the definition of the linear form $F_\nabla \colon A \to \mathbbm R_M$).
	\end{lemma}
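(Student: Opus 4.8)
The plan is to compress both assertions into a single identity relating $\eta_\nabla$ to the left splitting $f_\nabla$ after pulling back along the symbol map, and then to deduce multiplicativity from $\partial^2=0$ and the restriction to $A$ by evaluating on the core. To begin with, $\eta_\nabla$ is genuinely a $1$-form on $G$: both $s^\ast\nabla$ and $t^\ast\nabla$ are connections in the \emph{same} line bundle $L\to G$, so their difference lies in $\Gamma(T^\ast G\otimes\operatorname{End}L)=\Gamma(T^\ast G\otimes\mathbbm{R}_G)=\Omega^1(G)$. Let $f_\nabla\colon DL_M\to\operatorname{End}L_M=\mathbbm{R}_M$ be the left splitting of the Spencer sequence attached to $\nabla$ (see Diagram \eqref{eq:fnabla}). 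The first thing I would establish is that the left splitting of a pull-back connection is the pull-back of the left splitting, i.e.
\[
f_{s^\ast\nabla}=f_\nabla\circ Ds,\qquad f_{t^\ast\nabla}=f_\nabla\circ Dt
\]
as $\mathbbm{R}$-valued fibre-wise linear functions on $DL$, where $Ds,Dt\colon DL\to DL_M$ are the source and target of the VBG $DL\rightrightarrows DL_M$. Granting this, and using $f_{\nabla'}(\delta)=\delta-\nabla'_{\sigma(\delta)}$, we get for $\delta\in D_gL$ with $w=\sigma(\delta)$ that $f_{s^\ast\nabla}(\delta)-f_{t^\ast\nabla}(\delta)=(t^\ast\nabla)_w-(s^\ast\nabla)_w=-\eta_\nabla(w)$, hence the key identity
\begin{equation*}
\sigma^\ast\eta_\nabla=Dt^\ast f_\nabla-Ds^\ast f_\nabla ,
\end{equation*}
where $\sigma^\ast\eta_\nabla\in\Gamma(\operatorname{Alt}^1(DL,\mathbbm{R}))$, $(\sigma^\ast\eta_\nabla)(\delta)=\eta_\nabla(\sigma(\delta))$, is the pull-back along the symbol $\sigma\colon DL\to TG$ (an ordinary VB morphism, so pulling back $\mathbbm{R}$-valued forms is harmless).

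For multiplicativity, observe that the right-hand side above is, up to sign, $\partial f_\nabla$ for the simplicial differential $\partial$ of the trivial-coefficient der complex $\bigl(\Gamma(\operatorname{Alt}^\bullet(DL^{(\bullet)},\mathbbm{R})),\partial\bigr)$, since $Ds,Dt$ are precisely the two face maps $DL^{(1)}\to DL^{(0)}$ of the simplicial vector bundle $DL^{(\bullet)}\to G^{(\bullet)}$. By \eqref{eq:DFcommutes} applied to the face maps of $G^{(\bullet)}$, the symbol maps $\sigma\colon DL^{(p)}\to TG^{(p)}$ assemble into a simplicial VB morphism, so $\sigma^\ast$ is a cochain map $(\Omega^\bullet(G^{(\bullet)}),\partial)\to(\Gamma(\operatorname{Alt}^\bullet(DL^{(\bullet)},\mathbbm{R})),\partial)$, and it is injective because each $\sigma$ is fibre-wise surjective (the Spencer sequence). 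Therefore
\[
\sigma^\ast(\partial\eta_\nabla)=\partial(\sigma^\ast\eta_\nabla)=\pm\,\partial(\partial f_\nabla)=0 ,
\]
and injectivity of $\sigma^\ast$ forces $\partial\eta_\nabla=0$, i.e.\ $\eta_\nabla$ is multiplicative.

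For the second claim I would restrict the key identity to the core of the VBG $DL\rightrightarrows DL_M$. Recall that this core is $\ker(Ds)|_M$, that the symbol identifies it with $A$, and that under this identification $Dt$ restricts to the core-anchor $\mathcal D\colon A\to DL_M$. Hence, for $a\in A_x$ with corresponding core vector $\tilde a$, one has $\sigma(\tilde a)=a$, $Ds(\tilde a)=0$ and $Dt(\tilde a)=\mathcal D_a$, so
\[
\eta_\nabla(a)=(\sigma^\ast\eta_\nabla)(\tilde a)=f_\nabla(Dt\,\tilde a)-f_\nabla(Ds\,\tilde a)=f_\nabla(\mathcal D_a)=F_\nabla(a),
\]
which is exactly $\eta_\nabla|_A=F_\nabla$.

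The step that will require the most care is the identity $f_{s^\ast\nabla}=f_\nabla\circ Ds$ (and its target analogue): this is a routine but slightly delicate unwinding of the definitions of the pull-back connection and of the functor $D$, which I would settle by evaluating both sides on pull-back sections $s^\ast\lambda$ ($\lambda\in\Gamma(L_M)$) and comparing $(s^\ast\nabla)_w(s^\ast\lambda)=s^\ast(\nabla_{ds(w)}\lambda)$ with $Ds(\delta)(\lambda)=s\bigl(\delta(s^\ast\lambda)\bigr)$. Should the der-complex bookkeeping become unwieldy, there is a fully elementary fallback: the $G$-action on $L_M$ is a groupoid morphism $G\to\operatorname{GL}(L_M)$, so over an open $U\subseteq M$ trivializing $L_M$ it is encoded by a cocycle $\phi\colon G_U\to\mathbbm{R}^{\times}$ with $\phi(gg')=\phi(g)\phi(g')$; in that trivialization $\eta_\nabla$ becomes $d\log\phi$ modulo a $\partial$-exact term coming from the connection $1$-form of $\nabla$, whence both $\partial\eta_\nabla=0$ and $\eta_\nabla|_A=F_\nabla$ are immediate.
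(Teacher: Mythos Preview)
Your proof is correct and takes a genuinely different route from the paper. The paper proves multiplicativity by a direct four-line computation on connections, using only the simplicial identities $s\circ m=s\circ\pr_2$ and $t\circ m=t\circ\pr_1$ to show $m^\ast\eta_\nabla=\pr_1^\ast\eta_\nabla+\pr_2^\ast\eta_\nabla$; it then establishes $\eta_\nabla|_A=F_\nabla$ by an explicit local calculation, writing $s^\ast\lambda_M=f\,t^\ast\lambda_M$ near a unit and computing both sides on a curve $g(\varepsilon)$ in an $s$-fibre. Your approach instead packages both statements into the single identity $\sigma^\ast\eta_\nabla=Dt^\ast f_\nabla-Ds^\ast f_\nabla$, recognizes the right-hand side as $\partial f_\nabla$ in an $\mathbbm R$-valued simplicial complex on $DL^{(\bullet)}$, and deduces multiplicativity from $\partial^2=0$ together with the injectivity of $\sigma^\ast$; the restriction to $A$ then drops out by evaluating on the core, where $Ds$ vanishes and $Dt$ is the core-anchor $\mathcal D$. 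The identity $f_{s^\ast\nabla}=f_\nabla\circ Ds$ that you flag as the delicate step is in fact used later in the paper (in the proof that $\mc_\theta$ is a VBG morphism), so establishing it here is not wasted effort. Your argument is more conceptual and makes the VBG structure on $DL$ do the work, at the cost of setting up the auxiliary $\mathbbm R$-valued simplicial complex; the paper's proof of multiplicativity is shorter and entirely self-contained, but its proof of the restriction is noticeably more laborious than yours.
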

	\begin{proof}
	For the first part of the statement, denote by $\pr_{1,2} \colon L^{(2)} = L \mathbin{{}_s \times_t} L \to L$ the projections onto the two factors. Then
	\begin{align*}
		m^{\ast} \eta_{\nabla}&= m^{\ast}s^{\ast} \nabla - m^{\ast}t^{\ast}\nabla =\pr_2^{\ast}s^{\ast}\nabla -\pr_1^{\ast} t^{\ast} \nabla\\
		&=\pr_2^{\ast}s^{\ast}\nabla - \pr_2^{\ast} t^{\ast}\nabla + \pr_1^{\ast}s^{\ast}\nabla -\pr_1^{\ast} t^{\ast} \nabla = \pr_2^{\ast} \eta_{\nabla} + \pr_1^{\ast} \eta_{\nabla},
	\end{align*}
	where we used that $s \circ m = s \circ \pr_2$ and $t \circ m = t \circ \pr_1$. 
	
For the second part of the statement, let $\lambda_M\in \Gamma(L_M)$, $\lambda = s^{\ast}\lambda_M$ and $x\in M$. Then locally, around $x$, $s^{\ast}\lambda_M = ft^{\ast}\lambda_M$, for some function $f\in C^{\infty}(G)$ such that $f|_M=1$. Hence, for any $a\in A_x$, we have
		\begin{equation*}
			\eta_{\nabla}(a)\lambda_x = (s^{\ast}\nabla)_a\lambda - (t^{\ast}\nabla)_a\lambda= - (t^{\ast}\nabla)_a(ft^{\ast}\lambda_M)= - a(f)\lambda_{M,x} - \nabla_{\rho(a)} \lambda_M ,
		\end{equation*}	
		and
		\begin{equation*}
			F_\nabla(a)\lambda_x=F_\nabla(a)\lambda_{M,x}=\mathcal{D}_a\lambda_M - \nabla_{\rho(a)}\lambda_M.
		\end{equation*}
		Now let $a=\tfrac{d}{d\varepsilon}|_{\varepsilon=0} g(\varepsilon)$ be the velocity of a curve $g(\varepsilon)\colon x\to x(\varepsilon)$ in the $s$-fiber over $x$. Then
		\begin{align*}
			\mathcal{D}_a \lambda_M &= \frac{d}{d\varepsilon}|_{\varepsilon=0} \, g(\varepsilon)^{-1}. \lambda_{M,x(\varepsilon)}= \frac{d}{d\varepsilon}|_{\varepsilon=0} \, t\left(s_{g(\varepsilon)^{-1}}^{-1}\left(\lambda_{M,s(g(\varepsilon)^{-1})}\right)\right)\\
			&=\frac{d}{d\varepsilon}|_{\varepsilon=0} \, t\left(\left(s^{\ast}\lambda_M\right)_{g(\varepsilon)^{-1}}\right)= \frac{d}{d\varepsilon}|_{\varepsilon=0} \, t\left(f\left(g(\varepsilon)^{-1}\right)\left(t^{\ast}\lambda_M\right)_{g(\varepsilon)^{-1}}\right) \\
			&=\frac{d}{d\varepsilon}|_{\varepsilon=0} \, f\left(g(\varepsilon)^{-1}\right) \lambda_{M,x} =di(a)\left(f\right)\lambda_{M,x}\\
			&= \left(\rho(a)(f)- a(f)t\right)\lambda_{M,x}=-a(f)\lambda_{M,x},
		\end{align*}
		where, in the last step, we used that $f$ is constant on $M$. This concludes the proof.
	\end{proof}
	
	\begin{rem}
	Let $\nabla'$ be another connection on $L_M$. The difference $\alpha_{\nabla, \nabla'} = \nabla - \nabla'$ is a $1$-form on $M$. Moreover, $\eta_{\nabla} - \eta_{\nabla'} = \partial \alpha_{\nabla, \nabla'}$. In other words the $\partial$-cohomology class $\varkappa_L := [\eta_{\nabla}]$ of $\eta_\nabla$ is independent of $\nabla$ and it is a ``characteristic class'' attached to the LBG $L$. Clearly $\varkappa_L$ is the obstruction to the existence of a $G$-invariant connection on $L_M$, i.e.~a connection $\nabla$ such that $s^\ast \nabla = t^\ast \nabla$.
	\end{rem}

	We are now ready to define a stacky version of the curvature of $\theta$. Recall from the discussion preceding Proposition \ref{prop:+1_Mor_ker_Mor_inv} the explicit description of the twisted dual VBG $MK_\theta^\dag = T^\dag M \oplus \mathbbm R_G$ of the Morita kernel and consider the VB morphism
	\begin{equation*}
		\mc_{\theta}=
		\begin{pmatrix}
			d^{t^{\ast}\nabla}\theta & \eta_{\nabla}\\
			-\eta_{\nabla} & 0
		\end{pmatrix}
		\colon \mk_{\theta} \to \mk_{\theta}^{\dag}, \quad 
		\begin{pmatrix}
			v \\
			\lambda
		\end{pmatrix} \mapsto \begin{pmatrix} \iota_v d^{t^{\ast}\nabla}\theta + \lambda \otimes \eta_{\nabla} \\
		-\eta_{\nabla}(v)
		\end{pmatrix}.
	\end{equation*}
		
	\begin{prop/def}\label{prop:1-shift_MC}
		The VB morphism $\mc_{\theta}$ is a VBG morphism which acts on units as follows:
		\begin{equation*}
		\mc_{\theta}
		\colon TM \oplus L_M \to A^{\dag}, \quad 
		\begin{pmatrix}
			v \\
			\lambda
		\end{pmatrix} \mapsto  \big(\iota_v d^{t^{\ast}\nabla}\theta\big) |_A + \lambda \otimes \eta_{\nabla}|_A 
	\end{equation*}
		(combine Diagram \eqref{eq:cm_fibers_MC_+1} with Remark \ref{rem:Spencer} to get a more intrinsic description of the action of $\mc_\theta$ on units) \color{black}. We call this VBG morphism the \emph{Morita curvature of $\theta$}.
	\end{prop/def}
	\begin{proof}
		The proof is an easy computation which uses Proposition \ref{prop:ker_omega_comp}. For instance, for the source, take $g \in G$, $a \in A_{s(g)}$, $v\in T_gG$ and $\lambda\in L_g$. Then
	\begin{align}
		\Big\langle s\big(\mc_{\theta}(v,\lambda)\big), a\Big\rangle &=\left\langle s\big(\iota_v d^{t^{\ast}\nabla}\theta + \lambda\otimes \eta_{\nabla}, -\eta_{\nabla}(v)\big), a\right\rangle \nonumber\\
		&=-s\left(d^{t^{\ast}\nabla}\theta\big(v, 0_g^{TG}\cdot a^{-1}\big)\right) -s\left(\lambda\eta_{\nabla}\big(0_g^{TG}\cdot a^{-1}\big)\right) + \eta_{\nabla}(v)\theta(a) \nonumber\\
		&=-s\left(d^{t^{\ast}\nabla}\theta\big(v, 0_g^{TG}\cdot a^{-1}\big)\right) + \eta_{\nabla}(v)\theta(a) + \eta_{\nabla}(a) s(\lambda), \label{eq:fdgstw}
	\end{align}
	where we used that $\eta_\nabla$ is a multiplicative form. 	In order to compute the first summand in \eqref{eq:fdgstw}, consider $\omega \rightleftharpoons (\theta, 0) \in \OA^2 (L)$, and apply Equation \eqref{eq:omegaandcomponents} to the case $\sigma (\delta) = v$ and $\delta' = 0^{DL}_g\cdot a^{-1}$. We get
	\begin{equation*}
		-s\left(d^{t^{\ast}\nabla}\theta\big(v, 0_g^{TG}\cdot a^{-1}\big)\right)= -s\big(\omega(\delta, 0^{DL}_g\cdot a^{-1})\big) + f_{t^{\ast}\nabla}(\delta)s\big(\theta(0^{TG}_g \cdot a^{-1})\big) - f_{t^{\ast}\nabla}\big(0^{DL}_g\cdot a^{-1}\big)s\big(\theta(v)\big).
	\end{equation*}
	But $\omega$ is a multiplicative Atiyah form, so
	\begin{align*}
		m_{(g,s(g))}^{-1}\big(\omega(\delta, 0^{DL}_g \cdot a^{-1})\big)= m_{(g,s(g))}^{-1}\big(\omega(\delta \cdot Ds (\delta)), 0^{DL}_g \cdot a^{-1})\big)= \pr_{2,(g,s(g))}^{-1}\big(\omega(Ds(\delta), a^{-1})\big),
	\end{align*}
	whence, using that $s \circ m = s \circ \pr_2$,
	\begin{equation}\label{eq:fsgdthe}
		s\big(\omega(\delta, 0^{DL}_g\cdot a^{-1})\big)= \omega\big(Ds(\delta), a^{-1}\big)= -\omega\big(Ds(\delta),a\big).
	\end{equation}
	From the multiplicativity of $\theta$ we also get
	\begin{equation}\label{eq:ywrdsg}
		s\big(\theta(0^{TG}_g \cdot a^{-1})\big)= \theta(a^{-1})= -\theta(a).
	\end{equation}
	Substituting \eqref{eq:fsgdthe} and \eqref{eq:ywrdsg} in \eqref{eq:fdgstw}, and using that $f_{t^\ast \nabla} = f_\nabla \circ Dt$, so that $f_{t^{\ast}\nabla}(0^{DL}_g\cdot a^{-1})=0$, we get 
	\begin{align*}
		\Big\langle s\big(\mc_{\theta}(v,\lambda)\big), a\Big\rangle & = \omega\big(Ds(\delta),a\big)- f_{t^{\ast}\nabla}(\delta)\theta(a) + \eta_{\nabla}(v)\theta(a) + \eta_\nabla (a) s(\lambda)\\
		&= \omega\big(Ds(\delta), a\big) - f_{\nabla}\big(Ds(\delta)\big)\theta(a)  + \eta_\nabla (a) s(\lambda),
	\end{align*}
	where, for the last equality, we used that
	\begin{equation*}
		\eta_{\nabla}(v)- f_{t^{\ast}\nabla}(\delta)= (s^{\ast}\nabla)_v-(t^{\ast}\nabla)_v - \delta + (t^{\ast}\nabla)_v= - f_{s^{\ast}\nabla}(\delta)=-f_\nabla\big(Ds(\delta)\big).
	\end{equation*}
	On the other hand, applying Equation \eqref{eq:omegaandcomponents} to the case $\delta \leadsto Ds(\delta)$ and $\delta' \leadsto a\in D_{s(g)}L$ we get
\begin{align*}
		\Big\langle \mc_{\theta}\big(s(v),s(\lambda)\big), a\Big\rangle & = d^{t^{\ast}\nabla}\theta \big(s(v), a\big) + \eta_{\nabla}(a) s(\lambda) \\
		& = \omega\big(Ds(\delta), a\big) - f_{t^{\ast}\nabla}\big(Ds(\delta)\big)\theta(a) + f_{t^{\ast}\nabla}(a)\theta\big(s(v)\big) + \eta_{\nabla}(a) s(\lambda)\\
		& = \omega\big(Ds(\delta), a\big) - f_{\nabla}\big(Ds(\delta)\big)\theta(a) + \eta_{\nabla}(a) s(\lambda) \\
		& = \Big\langle s\big(\mc_{\theta}(v,\lambda)\big), a\Big\rangle,
	\end{align*}
	where we used that $\theta(s(v))=0$ (from the multiplicativity). We conclude that $s\circ \mc_{\theta}= \mc_{\theta} \circ s$, as desired. Compatibility with all other structure maps is similar and we leave it to the reader.
	\end{proof}
	
	\begin{rem}	
	For a generic connection $\nabla$, $d^{t^\ast \nabla}\theta$ is not multiplicative. It rather satisfies the following identity:
	\begin{equation}\label{eq:partial_dtheta}
	\partial \big(d^{t^\ast \nabla}\theta\big) = \pr_1^\ast \eta_\nabla \wedge \pr_2 \theta.
	\end{equation}
To see this, compute
		\begin{align*}
			m^{\ast}d^{t^{\ast}\nabla}\theta &= d^{(tm)^{\ast}\nabla} m^{\ast}\theta\\
			&= d^{(tm)^{\ast}\nabla}(\pr_1^{\ast}\theta + \pr_2^{\ast}\theta)\\
			&= \big(d^{(tm)^{\ast}\nabla}\circ \pr_1^{\ast}\big )\theta + \big(d^{(t\pr_1)^{\ast}\nabla}\circ \pr_2^{\ast}\big)\theta \\
			&=\pr_1^{\ast}d^{t^{\ast}\nabla}\theta + \pr_2^{\ast}d^{t^{\ast}\nabla}\theta + \big(d^{(tm)^{\ast}\nabla} \circ \pr_2^{\ast} - \pr_2^{\ast}\circ d^{t^{\ast}\nabla}\big)\theta \\
			&=\pr_1^{\ast}d^{t^{\ast}\nabla}\theta + \pr_2^{\ast}d^{t^{\ast}\nabla}\theta +\big((d^{(tm)^{\ast}\nabla}- d^{(t\pr_2)^{\ast}\nabla})\circ \pr_2^{\ast}\big )\theta\\
			&= \pr_1^{\ast}d^{t^{\ast}\nabla}\theta + \pr_2^{\ast}d^{t^{\ast}\nabla}\theta + \big((t\pr_1)^{\ast}\nabla - (s\pr_1)^{\ast}\nabla\big)\wedge \pr_2^{\ast}\theta\\
			&=\pr_1^{\ast}d^{t^{\ast}\nabla}\theta + \pr_2^{\ast}d^{t^{\ast}\nabla}\theta + \pr_1^{\ast}(t^{\ast}\nabla - s^{\ast}\nabla)\wedge \pr_2^{\ast}\theta\\
			&=\pr_1^{\ast}d^{t^{\ast}\nabla}\theta + \pr_2^{\ast}d^{t^{\ast}\nabla}\theta - \pr_1^{\ast}\eta_{\nabla} \wedge \pr_2^{\ast}\theta,
		\end{align*}
whence the claim. Now, there exists an alternative straightforward proof of Proposition \ref{prop:1-shift_MC} using Equation \eqref{eq:partial_dtheta} rather than Atiyah forms and Proposition \ref{prop:ker_omega_comp}. We leave the details to the reader. Clearly, it follows from \eqref{eq:partial_dtheta} that, if the connection $\nabla$ is $G$-invariant, then $d^{t^\ast \nabla}\theta$ is a multiplicative form.
	\end{rem}
	
	As the Morita curvature $\mc_{\theta}\colon \mk_{\theta}\to \mk_{\theta}^{\dag}$ is a VBG morphism, it induces a cochain map on fibers:
	\begin{equation}\label{eq:cm_fibers_MC_+1}
		\begin{tikzcd}
			0 \arrow[r] & A_x \arrow[r, "(\rho{,} \ell_\theta)"] \arrow[d, "\mc_{\theta}"']& T_xM\oplus L_{M,x} \arrow[r] \arrow[d, "\mc_{\theta}"] & 0 \\
			0 \arrow[r] & T_x^{\dag}M \oplus \mathbbm{R} \arrow[r, "\rho^{\dag}+ \ell_\theta^{\dag}"'] & A_x^{\dag} \arrow[r] & 0
		\end{tikzcd}, \quad x \in M.
	\end{equation}
	Notice that the map $\mc_{\theta}\colon T_xM\oplus L_{M,x}\to A_x^{\dag}$ is actually minus the twisted transpose of the map $\mc_{\theta}\colon A_x\to T_x^{\dag}M \oplus \mathbbm{R}$ (in particular, the latter is an isomorphism in cohomology if and only if so is the former).
	
	\begin{rem}\label{rem:Spencer}
	A multiplicative vector valued form on a Lie groupoid determines (and, under appropriate connectedness assumptions, is determined by) certain \emph{infinitesimal data}, the associated \emph{Spencer operator} \cite{Craini:multiplicativeforms} (see also \cite{Drummond:DifferentialformsinVBG} for a more general case). For instance, the Spencer operator of a multiplicative $L$-valued $1$-form $\theta \in \Omega^1 (G, L)$ is the pair $(D_\theta, \ell_\theta)$ where $\ell_\theta \colon A \to L_M$, just as above, is the restriction of $\theta$ to $A$, while $D_\theta \colon \Gamma (A) \to \Omega^1 (M, L_M)$ is the differential operator defined by 
	\begin{equation}\label{eq:D_theta}
	D_\theta a = u^\ast \left( \mathcal L_{\vec{a}} \theta \right), \quad a \in \Gamma (A),
	\end{equation}
	(see \cite{Craini:multiplicativeforms}, see also \cite[Section 10]{Vitagliano:djbundles} for this precise version of the definition of $D_\theta$). Equation \eqref{eq:D_theta} requires some explanations. Here $\vec{a} \in \Gamma (DL)$ is the right invariant derivation corresponding to $a$, and $\mathcal L_{\vec{a}} \theta \in \Omega^1 (G, L)$ denotes the Lie derivative of $\theta$ along $\vec{a}$:
	\[
	\mathcal L_{\vec{a}} \theta (X) = \vec{a} \big( \theta (a) \big) - \theta \big([\sigma(\vec{a}), X]\big), \quad X \in \mathfrak X (G).
	\]
	Now, it is not hard to see that the map $\mc_\theta \colon A \to T^\dag M \oplus L_M$ takes a section $a \in \Gamma (A)$ to
	\[
	\left(D_\theta a - d^\nabla \ell_\theta (a), F_\nabla (a) \right) \in \Gamma (T^\dag M \oplus L_M).\qedhere
	\]
	\end{rem}
	
	We now prove three ``invariance properties'' of the Morita curvature, explaining in which precise sense the Morita curvature is Morita invariant:
	\begin{enumerate}
	\item the Morita curvature is independent of the connection $\nabla$ up to linear natural isomorphisms (see Proposition \ref{prop:+1_MC_conn});
	\item the Morita curvature does only depend on the $\partial$-cohomology class of $\theta$ up to Morita equivalence (see Proposition \ref{prop:+1_MC_cohom_class});
	\item the Morita curvatures of two $1$-forms related by a Morita equivalence are also related by a Morita equivalence (see Proposition \ref{prop:+1_MC_Mequiv}). 
	\end{enumerate}
	
	\begin{prop}\label{prop:+1_MC_conn}
		Let $\nabla$ and $\nabla'$ be two connections on $L_M$. Then there is a linear natural isomorphism  (Definition \ref{def:LNT}) between the corresponding Morita curvatures.
	\end{prop}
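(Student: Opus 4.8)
The plan is to deduce the statement from Theorem~\ref{theo:VBtransformation}. Write $\mc_\theta$ for the Morita curvature built from $\nabla$ and $\mc_\theta'$ for the one built from $\nabla'$; both are VBG morphisms $\mk_\theta\to\mk_\theta^\dag$ covering the identity of $G\rightrightarrows M$ (Proposition/Definition~\ref{prop:1-shift_MC}). Hence, by Theorem~\ref{theo:VBtransformation}, a linear natural isomorphism $\mc_\theta\Rightarrow\mc_\theta'$ is the same datum as a smooth homotopy $h$ between the cochain maps \eqref{eq:cm_fibers_MC_+1} induced on fibres by $\mc_\theta$ and $\mc_\theta'$, that is, a VB morphism $h\colon TM\oplus L_M\to T^\dag M\oplus\mathbbm{R}_M$ with
\[
h\circ(\rho,\ell_\theta)=\mc_\theta'-\mc_\theta\quad\text{in degree }-1,\qquad (\rho^\dag+\ell_\theta^\dag)\circ h=\mc_\theta'-\mc_\theta\quad\text{in degree }0 .
\]
Since, as observed right after \eqref{eq:cm_fibers_MC_+1}, each degree-$0$ component of the Morita curvature is minus the $L_M$-adjoint of its degree-$(-1)$ component, and $\rho^\dag+\ell_\theta^\dag$ is the adjoint of $(\rho,\ell_\theta)$, the second identity will follow from the first by adjunction; so I only need to handle the degree-$(-1)$ one.

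The next step is to record the dependence of the Morita curvature on the connection. Put $\alpha:=\nabla-\nabla'\in\Omega^1(M)$. As already noted in the excerpt, $\eta_\nabla-\eta_{\nabla'}=\partial\alpha$, while $d^{t^\ast\nabla}\theta-d^{t^\ast\nabla'}\theta=(t^\ast\alpha)\wedge\theta$ because connection differentials differ by wedging with the difference of the connections. Thus, in the matrix notation of Proposition/Definition~\ref{prop:1-shift_MC},
\[
\mc_\theta-\mc_\theta'=\begin{pmatrix}(t^\ast\alpha)\wedge\theta & \partial\alpha\\ -\partial\alpha & 0\end{pmatrix}\colon\mk_\theta\longrightarrow\mk_\theta^\dag .
\]
Restricting to the fibre over $x\in M$ and evaluating on the core $A_x$ — using that $u^\ast\theta=0$ since $\theta$ is multiplicative, and that $ds|_{A_x}=0$, $dt|_{A_x}=\rho$ — the term of $(t^\ast\alpha)\wedge\theta$ involving $u^\ast\theta$ vanishes and a short computation gives that the degree-$(-1)$ component of $\mc_\theta-\mc_\theta'$ sends $a\in A_x$ to $(-\ell_\theta(a)\otimes\alpha_x,\,\alpha_x(\rho(a)))\in T^\dag_xM\oplus\mathbbm{R}$. (Alternatively, feeding Proposition~\ref{prop:ker_omega_comp} applied to $\omega\rightleftharpoons(\theta,0)$ into this computation in place of the identity for $d^{t^\ast\nabla}\theta$ yields the same conclusion; the two routes are interchangeable.)

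With this in hand I would take the homotopy to have the same shape as the Morita curvature itself, namely
\[
h=\begin{pmatrix}0 & \alpha^\dag\\ -\alpha & 0\end{pmatrix}\colon TM\oplus L_M\longrightarrow T^\dag M\oplus\mathbbm{R}_M,\qquad (v,\lambda)\longmapsto(\lambda\otimes\alpha,\,-\iota_v\alpha),
\]
where $\alpha$ is regarded both as a map $TM\to\mathbbm{R}_M$ and, via $\alpha^\dag(\lambda)=\lambda\otimes\alpha$, as a map $L_M\to T^\dag M$. Then $h\circ(\rho,\ell_\theta)$ sends $a$ to $(\ell_\theta(a)\otimes\alpha,\,-\alpha(\rho(a)))$, which is precisely $\mc_\theta'-\mc_\theta$ in degree $-1$ by the previous paragraph; the degree-$0$ identity then follows by adjunction. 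Therefore $h$ is a smooth homotopy, and Theorem~\ref{theo:VBtransformation} produces the sought linear natural isomorphism — explicitly $T=\mc_\theta+h$, viewed as a VB morphism $TM\oplus L_M\to\mk_\theta^\dag$ covering the unit $u\colon M\to G$.

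I expect the only genuinely delicate point to be the fibrewise computation of $\mc_\theta-\mc_\theta'$ on the core in the middle paragraph: one has to exploit multiplicativity of $\theta$ (in the form $u^\ast\theta=0$) and the position of $A$ inside $TG$ relative to $ds$, $dt$ so that $(t^\ast\alpha)\wedge\theta$ reduces on the core exactly to $-\ell_\theta(\cdot)\otimes\alpha$. Once that identity — and, by adjunction, its degree-$0$ counterpart — is secured, matching it against $h$ is immediate and there is no further obstruction; as elsewhere in the paper, I would state the key identities and leave the routine verification of the homotopy conditions to the reader.
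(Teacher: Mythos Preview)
Your proposal is correct and follows essentially the same route as the paper. The paper also sets $\alpha_{\nabla,\nabla'}=\nabla-\nabla'$, defines the same homotopy
\[
h=\begin{pmatrix}0 & \alpha_{\nabla,\nabla'}\\ -\alpha_{\nabla,\nabla'} & 0\end{pmatrix}\colon TM\oplus L_M\to T^{\dag}M\oplus\mathbbm{R}_M,
\]
verifies the degree-$(-1)$ identity by the direct computation you describe (using $d^{t^\ast\nabla}\theta-d^{t^\ast\nabla'}\theta=t^\ast\alpha\wedge\theta$ and $\eta_\nabla-\eta_{\nabla'}=\partial\alpha$), observes that degree $0$ follows by skew-selfadjointness of $h$ (your ``adjunction''), and concludes via Theorem~\ref{theo:VBtransformation}. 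The only cosmetic difference is that you first record the global matrix $\mc_\theta-\mc_\theta'$ and then restrict to the core, whereas the paper computes directly on the core; the content is identical.
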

	\begin{proof}
		Denote by $\mc$ and $\mc'$ the Morita curvatures defined through $\nabla$ and $\nabla'$ respectively. Moreover, set $\alpha_{\nabla, \nabla'}=\nabla -\nabla' \in \Omega^1(M)$. The map 		
		\begin{equation*}
			h=\begin{pmatrix}
			0 & \alpha_{\nabla, \nabla'}\\
			-\alpha_{\nabla, \nabla'} & 0
			\end{pmatrix}
			\colon TM\oplus L_M \to T^{\dag}M\oplus \mathbbm{R}_M
		\end{equation*}
		is a homotopy between the cochain maps on the core complexes determined by $\mc, \mc'$:
		\[
		\begin{tikzcd}
					0 \arrow[r] & A \arrow[d, "\mc", shift left=0.5ex] \arrow[d, "\mc'"', shift right=0.5ex]\arrow[r]& TM \oplus L_M \arrow[d, "\mc", shift left=0.5ex] \arrow[d,"\mc'"', shift right=0.5ex] \arrow[r] \arrow[dl, "h"'] &0\\
					0 \arrow[r] &T^\dag M \oplus \mathbbm R_M\arrow[r] &A^\dag \arrow[r]&0
				\end{tikzcd}.
		\]
		 Indeed, in degree $-1$, for any $x\in M$, $a\in A_x$, and $(v,\lambda)\in T_xM\oplus L_{M,x}$ we have 
		\begin{align*}
			\Big\langle(\mc' - \mc)(a), (v,\lambda)\Big\rangle&= \big(d^{t^{\ast}\nabla}\theta-d^{t^{\ast}\nabla'}\theta\big)(a,v) - (\eta_{\nabla}- \eta_{\nabla'})(a)\lambda  \\
			&=(t^{\ast}\alpha_{\nabla, \nabla'}\wedge\theta) (a,v)+t^{\ast}\alpha_{\nabla, \nabla'} (a) \lambda\\
			&=-\alpha_{\nabla, \nabla'}(v) \theta(a)+\alpha_{\nabla, \nabla'}(\rho(a))\lambda \\
			& = \big\langle h(\rho(a), \theta(a)), (v,\lambda)\big\rangle.
		\end{align*}
			The homotopy condition in degree $0$ now follows from the skew-selfadjointness of $h$. Finally, use Theorem \ref{theo:VBtransformation}.
	\end{proof}
	
	The next two propositions are straightforward.
	
	\begin{prop}\label{prop:+1_MC_cohom_class}
		Let $\theta, \theta' \in \Omega^1(G,L)$ be $\partial$-cohomologous multiplicative $1$-forms: $\theta -\theta' =\partial \alpha$, for some $\alpha\in \Omega^1(M,L_M)$. Then the Morita curvatures fit in the following commutative square
		\begin{equation*}
			\begin{tikzcd}
				\mk_{\theta} \arrow[r, "\mathsf A"] \arrow[d, "\mc_{\theta}"']& \mk_{\tilde{\theta}}\arrow[d, "\mc_{\theta'}"] \\
				\mk^{\dag}_{\theta} & \mk^{\dag}_{\theta'} \arrow[l, "\mathsf A^{\dag}"] 
			\end{tikzcd},
		\end{equation*}
		where $\mathsf A$ is the VB Morita map defined in Proposition \ref{prop:invarianceoftheta}.
	\end{prop}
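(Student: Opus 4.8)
The plan is to verify the asserted commutativity by a direct computation with the explicit block‑matrix descriptions of the four maps, after fixing once and for all a connection $\nabla$ in $L_M$ used to define both Morita curvatures; by Proposition \ref{prop:+1_MC_conn} the choice of $\nabla$ is immaterial up to linear natural isomorphism, and note that $\eta_\nabla$ does not involve $\theta$. First I would unwind everything as vector bundle morphisms over $\mathrm{id}_G$. Both $\mk_\theta$ and $\mk_{\theta'}$ have total space $TG\oplus L$, and by Proposition \ref{prop:invarianceoftheta} the map $\mathsf A$ is the shear $(v,\lambda)\mapsto(v,\lambda+s^\ast\alpha(v))$, a VB Morita map; dualizing fibrewise with respect to the tautological $L$‑valued pairing of Example \ref{ex:adjoint}, its adjoint $\mathsf A^\dag\colon\mk_{\theta'}^\dag\to\mk_\theta^\dag$ is the transpose shear $(\psi,r)\mapsto(\psi+r\,s^\ast\alpha,\,r)$ on $T^\dag G\oplus\mathbbm R_G$ (again VB Morita). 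Combining this with the formula $\mc_{\theta'}=\left(\begin{smallmatrix} d^{t^\ast\nabla}\theta' & \eta_\nabla\\ -\eta_\nabla & 0\end{smallmatrix}\right)$ of Proposition/Definition \ref{prop:1-shift_MC}, the composite becomes, on $(v,\lambda)\in T_gG\oplus L_g$,
\[
\bigl(\mathsf A^\dag\circ\mc_{\theta'}\circ\mathsf A\bigr)(v,\lambda)=\bigl(\iota_v d^{t^\ast\nabla}\theta'+\lambda\,\eta_\nabla+\iota_v(s^\ast\alpha\wedge\eta_\nabla),\ -\eta_\nabla(v)\bigr),
\]
to be compared with $\mc_\theta(v,\lambda)=(\iota_v d^{t^\ast\nabla}\theta+\lambda\,\eta_\nabla,\ -\eta_\nabla(v))$.

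The $\mathbbm R_G$‑components already agree, so the whole statement reduces to the single identity of $L$‑valued $2$‑forms on $G$
\[
d^{t^\ast\nabla}(\theta-\theta')=-\,\eta_\nabla\wedge s^\ast\alpha .
\]
To check it I would substitute $\theta-\theta'=\partial\alpha=s^\ast\alpha-t^\ast\alpha$ and use two elementary facts: the target pullback intertwines $d^\nabla$ with $d^{t^\ast\nabla}$, so $d^{t^\ast\nabla}(t^\ast\alpha)=t^\ast(d^\nabla\alpha)$; and since $s^\ast\nabla-t^\ast\nabla=\eta_\nabla$ one has $d^{t^\ast\nabla}=d^{s^\ast\nabla}-\eta_\nabla\wedge(-)$ on $L$‑valued forms, so $d^{t^\ast\nabla}(s^\ast\alpha)=s^\ast(d^\nabla\alpha)-\eta_\nabla\wedge s^\ast\alpha$. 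Subtracting gives $d^{t^\ast\nabla}(\partial\alpha)=\partial(d^\nabla\alpha)-\eta_\nabla\wedge s^\ast\alpha$, and the leftover term $\partial(d^\nabla\alpha)$ restricts to $0$ along the units $M\subseteq G$ (where $s$ and $t$ are the identity), where also $\eta_\nabla$ vanishes. Hence the two composites coincide along $M$, which is exactly what forces the cochain maps \eqref{eq:cm_fibers_MC_+1} that they induce on all fibres to agree — the content of the square in the Morita‑theoretically relevant sense — and, by Theorem \ref{theo:VBtransformation}, this packages into a linear natural isomorphism between $\mathsf A^\dag\circ\mc_{\theta'}\circ\mathsf A$ and $\mc_\theta$. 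On the core $A$ the relevant square is, via $\mathsf A|_A=\mathrm{id}$ and $\ell_{\theta'}=\ell_\theta+\alpha\circ\rho$, precisely diagram \eqref{eq:Phi_alpha} from the proof of Proposition \ref{prop:invarianceoftheta}.

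The only genuine difficulty is the bookkeeping: one must keep straight the several pullbacks (along $s$, $t$, and the simplicial face maps), the two connection differentials $d^{s^\ast\nabla}$ and $d^{t^\ast\nabla}$ on the single line bundle $L$ together with their difference $\eta_\nabla$, and the precise twisted structure maps of the adjoint VBGs from Example \ref{ex:adjoint}, so that $\mathsf A^\dag$ is correctly identified with the fibrewise transpose. Once these identifications are fixed, the computation above is short, which is why the statement is labelled straightforward. One could equally route the argument through Atiyah forms, using that $\mc_\theta$ encodes the sharp map of $\omega\rightleftharpoons(\theta,0)$ (Proposition \ref{prop:ker_omega_comp} and the proof of Proposition/Definition \ref{prop:1-shift_MC}) together with $\dA\sigma^\ast\theta-\dA\sigma^\ast\theta'=\partial(\dA\sigma^\ast\alpha)$ and the naturality of pullbacks of Atiyah forms; the amount of work is comparable.
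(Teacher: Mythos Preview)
Your reduction is correct and matches what a ``straightforward'' verification would do: after unwinding $\mathsf A$, $\mathsf A^\dag$ and the block form of $\mc_{\theta'}$, strict commutativity of the square is equivalent to the identity of $L$-valued $2$-forms
\[
d^{t^\ast\nabla}(\theta-\theta')=-\,\eta_\nabla\wedge s^\ast\alpha,
\]
and you correctly compute $d^{t^\ast\nabla}(\partial\alpha)=\partial(d^\nabla\alpha)-\eta_\nabla\wedge s^\ast\alpha$.

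The gap is in the next step. Your claim that $\partial(d^\nabla\alpha)$ restricts to zero along $M\subseteq G$ is false: at a unit $x$ one has, for $a\in A_x$ and $w\in T_xM$,
\[
\partial(d^\nabla\alpha)(a,w)=d^\nabla\alpha\big(ds(a),ds(w)\big)-d^\nabla\alpha\big(dt(a),dt(w)\big)=-\,d^\nabla\alpha\big(\rho(a),w\big),
\]
which is generically nonzero. So the two composites do \emph{not} coincide along $M$, and in particular the induced cochain maps on fibres are not equal; the square does not commute on the nose. (What the argument needs, and what suffices for the use made of it after Definition~\ref{def:+1_shift_LBG}, is commutativity up to linear natural isomorphism.)

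The fix is to exhibit the homotopy explicitly rather than assert it. With the same computation, the difference $\mc_\theta-\mathsf A^\dag\!\circ\mc_{\theta'}\!\circ\mathsf A$ is $(v,\lambda)\mapsto(\iota_v\partial(d^\nabla\alpha),0)$; on the core this reads $a\mapsto(-\iota_{\rho(a)}d^\nabla\alpha,0)\in T^\dag_xM\oplus\mathbbm R$, and on units $(v,\lambda)\mapsto -\rho^\dag(\iota_v d^\nabla\alpha)\in A_x^\dag$. Hence
\[
h\colon T_xM\oplus L_{M,x}\longrightarrow T_x^\dag M\oplus\mathbbm R,\qquad h(v,\lambda)=\big(-\iota_v d^\nabla\alpha,\,0\big)
\]
satisfies $h\circ(\rho,\ell_\theta)=(\text{difference on }A)$ and $(\rho^\dag+\ell_\theta^\dag)\circ h=(\text{difference on units})$, so by Theorem~\ref{theo:VBtransformation} the two VBG morphisms are related by a linear natural isomorphism. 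This is what your last paragraph was reaching for, but the intermediate ``coincide along $M$'' claim should be replaced by this explicit homotopy.
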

	
	\begin{prop}\label{prop:+1_MC_Mequiv}
		Let $(F,f)\colon (L'\rightrightarrows L'_N;H\rightrightarrows N)\to (L\rightrightarrows L_M;G\rightrightarrows M)$ be a VB Morita map of LBGs, and let $\theta \in \Omega^1(G,L)$ be a multiplicative $L$-valued $1$-form. Moreover, let $\nabla$ be a connection on $L_M$. Denote $\theta' = F^\ast \theta$ and $\nabla' = f^\ast \nabla$. Then the Morita curvatures $\mc_{\theta}, \mc_{\theta'}$ fit in the following ``commutative'' pentagon
		\begin{equation}\label{diag:pentagon}
			{\footnotesize
				\begin{tikzcd}[column sep={{{{4em,between origins}}}},
					row sep=2em]
					\mk_{\theta'} \arrow[rrrr, "\mathsf F"] \arrow[d, "\mc_{\theta'}"']  & & & & \mk_{\theta} \arrow[d, "\mc_{\theta}"] \\
					\mk^{\dag}_{\theta'} & & & & \mk^{\dag}_{\theta} \\
					& & f^{\ast}\mk^{\dag}_{\theta} \arrow[ull, "\mathsf F^\dag"] \arrow[urr, "f"'] & &
				\end{tikzcd}},
		\end{equation}
		where all the other arrows are VB Morita maps (see Proposition \ref{prop:+1_Mor_ker_Mor_inv} for the definition of $\mathsf F$).
	\end{prop}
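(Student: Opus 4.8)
The plan is to construct the missing fifth edge of the pentagon as the canonical lift of $\mc_{\theta}\circ\mathsf F$ through the pullback $f\colon f^{\ast}\mk_{\theta}^{\dag}\to\mk_{\theta}^{\dag}$, and then to reduce the ``commutativity'' of \eqref{diag:pentagon} to a single pairing identity which follows from the naturality of the connection differential and of the forms $\eta_{\nabla}$ under pullback.

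First I would record that the three maps already appearing in \eqref{diag:pentagon} are VB Morita: $\mathsf F$ by Proposition \ref{prop:+1_Mor_ker_Mor_inv}; the canonical map $f\colon f^{\ast}\mk_{\theta}^{\dag}\to\mk_{\theta}^{\dag}$ by \cite[Corollary 3.7]{DelHoyo:VBmorita}; and $\mathsf F^{\dag}$ by (the proof of) Proposition \ref{prop:VBMoritaadjoint}, applied to the VB Morita maps $(F,f)\colon L'\to L$ and $(f_E,f)=(F,f)\colon L'\to L$, so that the adjoint VBGs are formed with respect to $E'=L'$ and $E=L$. In particular $\mk_{\theta}^{\dag}=\operatorname{Hom}(\mk_{\theta},L)$, $\mk_{\theta'}^{\dag}=\operatorname{Hom}(\mk_{\theta'},L')$, and $\langle\mathsf F^{\dag}(\psi,h),w\rangle=F_h^{-1}\langle\psi,\mathsf F(w)\rangle$ for $w\in(\mk_{\theta'})_h$.

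Next, $\mc_{\theta}\circ\mathsf F\colon\mk_{\theta'}\to\mk_{\theta}^{\dag}$ is a VBG morphism covering $f\colon H\to G$, so by the universal property of the pullback VBG it factors uniquely as $f\circ\Xi$ for a VBG morphism $\Xi\colon\mk_{\theta'}\to f^{\ast}\mk_{\theta}^{\dag}$ over $\operatorname{id}_H$, explicitly $\Xi(v,\lambda)=\big(\mc_{\theta}(df(v),F(\lambda)),\,h\big)$ for $(v,\lambda)\in T_hH\oplus L'_h$. This $\Xi$ is the missing edge, and the upper-right triangle of the pentagon commutes by construction; thus the whole content of the statement is the identity $\mathsf F^{\dag}\circ\Xi=\mc_{\theta'}$. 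Pairing against an arbitrary $(w_1,w_2)\in T_hH\oplus L'_h$ and expanding $\mc_{\theta},\mc_{\theta'}$ through their matrix forms $\left(\begin{smallmatrix} d^{t^{\ast}\nabla}\theta & \eta_{\nabla}\\ -\eta_{\nabla} & 0\end{smallmatrix}\right)$ and $\left(\begin{smallmatrix} d^{t^{\ast}\nabla'}\theta' & \eta_{\nabla'}\\ -\eta_{\nabla'} & 0\end{smallmatrix}\right)$, the required identity
\[
\big\langle(\mathsf F^{\dag}\circ\Xi)(v,\lambda),\,(w_1,w_2)\big\rangle=\big\langle\mc_{\theta'}(v,\lambda),\,(w_1,w_2)\big\rangle
\]
reduces to the three compatibilities $F^{\ast}\big(d^{t^{\ast}\nabla}\theta\big)=d^{t^{\ast}\nabla'}\theta'$, $f^{\ast}\eta_{\nabla}=\eta_{\nabla'}$, and the $F$-equivariance of the tautological pairings; the last is immediate, and the first two follow from $\theta'=F^{\ast}\theta$, $\nabla'=f^{\ast}\nabla$ together with the fact that the LBG morphism $(F,f)$ intertwines the source and target maps, via the pullback-connection identities $f^{\ast}(s^{\ast}\nabla)=s^{\ast}(f^{\ast}\nabla)$ and $f^{\ast}(t^{\ast}\nabla)=t^{\ast}(f^{\ast}\nabla)$.

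I expect the only mildly delicate point, exactly as in the proof of Proposition \ref{prop:1-shift_MC}, to be the bookkeeping of the line-bundle identifications $L\cong s^{\ast}L_M\cong t^{\ast}L_M$ (and the analogous ones on $H$) under which $\eta_{\nabla}$ is read as a plain $1$-form and $d^{t^{\ast}\nabla}\theta$ as an $L$-valued $2$-form. Once these are fixed, the verification above is routine — indeed shorter than the corresponding computation in Proposition \ref{prop:1-shift_MC} — and I would leave the remaining details to the reader.
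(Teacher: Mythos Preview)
Your proposal is correct and essentially matches the paper's approach. The paper declares the proposition ``straightforward'' and, in lieu of a proof, explains that the ``commutativity'' of the pentagon means commutativity of the induced diagram of cochain maps on fibers over each $x\in N$; your construction of the diagonal $\Xi\colon\mk_{\theta'}\to f^{\ast}\mk_{\theta}^{\dag}$ and the verification that $\mathsf F^{\dag}\circ\Xi=\mc_{\theta'}$ via the identities $F^{\ast}(d^{t^{\ast}\nabla}\theta)=d^{t^{\ast}\nabla'}\theta'$ and $f^{\ast}\eta_{\nabla}=\eta_{\nabla'}$ is precisely the explicit unpacking of that claim. One small terminological slip: all five edges of the pentagon are already drawn, so your $\Xi$ is a \emph{diagonal} (a sixth arrow triangulating the pentagon), not a ``missing fifth edge''.
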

	
	The statement of Proposition \ref{prop:+1_MC_Mequiv} requires some explanations. Let $A_G, A_H$ be the Lie algebroids of $G, H$. The pentagon \eqref{diag:pentagon} is ``commutative'' in the following sense: the corresponding pentagon on the bases is
	\[
	{\footnotesize
		\begin{tikzcd}[column sep={{{{4em,between origins}}}},
			row sep=2em]
			H \arrow[rrrr, "f"] \arrow[d, equal] & & & & G \arrow[d, equal] \\
			H & & & & G \\
			& & H \arrow[ull, equal] \arrow[urr, "f"'] & &
		\end{tikzcd}},
	\]
	which is trivially commutative, while the corresponding pentagon on the fibers over $x \in H$ and $f(x) \in G$ is
	\begin{equation}\label{eq:pentagono}
	{\scriptsize
		\begin{tikzcd}
			[column sep={{{{4em,between origins}}}},
			row sep=2em]
			0 \arrow[rd]&   &   &  \phantom{x} &   &   & 0  \arrow[rd] &   &   &   \\
			& A_{H,x} \arrow[rrrrrr, "df"] \arrow[rd] \arrow[ddd, swap, "\mc_{\theta'}"] &   &   & \phantom{x}  &   &   & A_{G,f(x)}  \arrow[rd] \arrow[ddd, swap, "\mc_{\theta}"] &   &   \\
			&   & T_xN\oplus L_{N,x} \arrow[rrrrrr, crossing over, swap, "\mathsf{F}"] \arrow[rd]   &   &   &  \phantom{x} &   &   & T_{f(x)}M\oplus L_{M,f(x)}  \arrow[rd] \arrow[ddd, "\mc_{\theta}"] & \phantom{x}   \\
			0 \arrow[rd] &   &   & 0 &   &   & 0 \arrow[rd] &   &   & 0 \\
			& T_y^\dagger N\oplus\mathbbm{R} \arrow[rd] \arrow[from=rrrd, swap, pos=.4, "\mathsf{F}^\dagger"] &   & 0 \arrow[rd] &   &   &   & T_{f(x)}^\dagger M\oplus\mathbbm{R} \arrow[rd] \arrow[llld, equal] &   &   \\
			& \phantom{x}  & A^\dagger_{H,x} \arrow[rd] \arrow[from=uuu, crossing over, "\mc_{\theta'}"] &   & T_{f(x)}^\dagger M\oplus\mathbbm{R} \arrow[rd] &   &   &   & A^\dagger_{M,f(x)} \arrow[rd] \arrow[llld, equal] &   \\
			& \phantom{x}  &   & 0 & \phantom{x}  & A^\dagger_{M,f(x)} \arrow[rd] \arrow[lllu, pos=.4, "df^\dagger"] &   & \phantom{x}  &   & 0 \\
			&   &   &  \phantom{x} &   &   & 0 &   & \phantom{x}  &    
	\end{tikzcd}}
\end{equation}
	which is a commutative diagram of cochain maps (with all cochain maps being quasi-isomorphisms except, at the most, for $\mc_\theta, \mc_{\theta'}$).
	
	\begin{rem}
	There is another (and more precise) way to express the ``commutativity'' of \eqref{diag:pentagon}. Namely, according to \cite[Proposition 6.2]{DelHoyo:VBmorita}, a VB Morita map covering the identity is an equivalence, i.e.~there exists an \emph{inverse up to linear natural isomorphisms} (covering the trivial natural isomorphism). Hence there exists an inverse $\mathsf G \colon \mk^\dag_{\theta'} \to f^{\ast}\mk^{\dag}_{\theta}$ up to linear natural isomorphisms for the VB Morita map $\mathsf F^\dag \colon f^{\ast}\mk^{\dag}_{\theta} \to \mk^\dag_{\theta'}$. Now, \eqref{diag:pentagon} is ``commutative'' in the sense that the diagram obtained by replacing $\mathsf F^\dag$ with $\mathsf G$  is commutative up to linear natural isomorphisms:
	\begin{equation*}
			\begin{tikzcd}
				\mk_{\theta'} \arrow[r, "\mathsf F"] \arrow[d, "\mc_{\theta'}"']& \mk_{\theta}\arrow[d, "\mc_{\theta}"] \\
				\mk^{\dag}_{\theta'} \arrow[r, "f \circ \mathsf G"'] \arrow[ur, Rightarrow]& \mk^{\dag}_{\theta} 			\end{tikzcd}.
		\end{equation*}
		The best way to see this is via Theorem \ref{theo:VBtransformation}. First of all the VBG morphisms $K:=f \circ \mathsf G \circ \mc_{\theta'}$ and $K':=\mc_\theta \circ \mathsf F$ both cover $f \colon H \to G$. So, it is enough, for our purpose, to find a smooth homotopy $\mathcal H$ between the induced cochain maps on the core complexes:
		\begin{equation*}
			\footnotesize
				\begin{tikzcd}
					0 \arrow[r] &A_H \arrow[d, "K", shift left=0.5ex] \arrow[d, "K'"', shift right=0.5ex]\arrow[r]& TN \oplus L_N \arrow[d, "K", shift left=0.5ex] \arrow[d, "K'"', shift right=0.5ex]\arrow[r] \arrow[dl, "\mathcal H"'] &0\\
					0 \arrow[r]& f^{\ast}T^\dag M \oplus \mathbbm R_N\arrow[r] &f^{\ast}A^\dag_G \arrow[r]&0
				\end{tikzcd}.
			\end{equation*}
			In order to construct $\mathcal H$, remember that there is a linear natural isomorphism $\mathsf G \circ \mathsf F^\dag \Rightarrow \operatorname{id}$. This means that there exists a homotopy $h$ between the induced cochain maps on the core complexes:
			\begin{equation*}
				\begin{tikzcd}
					0 \arrow[r] &f^\ast T^\dag M \oplus \mathbbm R_N \arrow[r]& f^\ast A^\dag_G \arrow[r] \arrow[l, bend left, "h"] &0				\end{tikzcd}.
			\end{equation*}
			Set $\mathcal H_x := h_{f(x)} \circ \mc_\theta \circ \mathsf F \colon T_xN \oplus L_{N,x} \to T _{f(x)}^\dag M \oplus \mathbbm R$. A direct computation, using that \eqref{eq:pentagono} is commutative, now shows that $\mathcal H$ is the homotopy we were looking for.  
\end{rem}

\begin{rem}\label{rem:kappa}
Let $\theta \in \Omega^1 (G, L)$ be a multiplicative $1$-form. Suppose we are also given a $2$-form $\kappa \in \Omega^2 (M, L_M)$. Define a new VB morphism $\mc_{(\theta, \kappa)} \colon \mk_\theta \to \mk_\theta^\dag$ by 
	\[
	\mc_{(\theta, \kappa)} =
		\begin{pmatrix}
			d^{t^{\ast}\nabla}\theta + \partial \kappa & \eta_{\nabla}\\
			-\eta_{\nabla} & 0
		\end{pmatrix}.
	\]
It is easy to see that $\mc_{(\theta, \kappa)}$ is a VBG morphism as well. Moreover, there is a linear natural isomorphism $\mc_\theta \Rightarrow \mc_{(\theta, \kappa)}$. This is a straightforward consequence of Theorem \ref{theo:VBtransformation}. Indeed the difference $\mc_{(\theta, \kappa)} - \mc_\theta$ induces the following null-homotopic cochain map on fibers:
\[
		\begin{tikzcd}
					0 \arrow[r] & A \arrow[d, "-\kappa \circ \rho"'] \arrow[r]& TM \oplus L_M \arrow[d, "\rho^\dag \circ \kappa"]  \arrow[r] \arrow[dl, "- \kappa"'] &0\\
					0 \arrow[r] &T^\dag M \oplus \mathbbm R_M\arrow[r] &A^\dag \arrow[r]&0
				\end{tikzcd}.
\]
It follows that $\mc_{(\theta, \kappa)}$ satisfies the obvious analogues of Propositions  \ref{prop:+1_MC_conn}, \ref{prop:+1_MC_cohom_class} and \ref{prop:+1_MC_Mequiv}. Moreover, $\mc_\theta$ is VB Morita if and only if so is $\mc_{(\theta, \kappa)}$. We will refer to $\mc_{(\theta, \kappa)}$ as the \emph{Morita curvature} of the pair $(\theta, \kappa)$.
\end{rem}		
			
	\subsection{Definition and Examples}\label{sec:+1-def_exmpl}
	
	We are finally ready to present a stacky version of the aspect (A5) in the definition of a contact structure in the Introduction and give a definition of $+1$-shifted contact structure. Let $(L \rightrightarrows L_M; G \rightrightarrows M)$ be an LBG.
	
	\begin{definition}\label{def:+1_shift_LBG}
		A \emph{$+1$-shifted contact structure} on $L$ is a pair $(\theta, \kappa)$ consisting of an $L$-valued multiplicative $1$-form $\theta\in \Omega^1(G,L)$ and an $L_M$-valued $2$-form $\kappa \in \Omega^2 (M, L_M)$ such that the Morita curvature $\mc_{(\theta, \kappa)}\colon \mk_{\theta}\to \mk_{\theta}^{\dag}$ is a VB Morita map. The \emph{gauge transformation} of a $+1$-shifted contact structure $(\theta, \kappa)$ on $L$ by a pair $(\beta, \gamma ) \in \Omega^1 (M, L_M) \oplus \Omega^2 (M, L_M)$ is the $+1$-shifted contact structure $(\theta + \partial \beta, \kappa + \gamma)$. A \emph{$+1$-shifted contact LBG} is an LBG equipped with a $+1$-shifted contact structure. 
	\end{definition}

	\begin{rem}\label{rem:gauge_equiv_cont}
	That the gauge transformation $(\theta + \partial \beta, \kappa + \gamma)$ of a $+1$-shifted contact structure $(\theta, \kappa)$ is a $+1$-shifted contact structure as well immediately follows from Propositions \ref{prop:invarianceoftheta}, \ref{prop:+1_MC_cohom_class}, Theorem \ref{theo:caratterizzazioneVBmorita}, and Remark \ref{rem:kappa}. 
	It is clear that every $+1$-shifted contact structure $(\theta, \kappa)$ can be gauge transformed into the $+1$-shifted contact structure $(\theta, 0)$.
	\end{rem}

	The notion of $+1$-shifted contact structure is Morita invariant in an appropriate sense. First of all we have the following
	
	\begin{theo}\label{theor:Mor_inv_+1-shifted_cont}
	Let $(F, f)\colon (L' \rightrightarrows L'_N; H \rightrightarrows N) \to (L \rightrightarrows L_M; G \rightrightarrows M)$ be a VB Morita map, let $\theta \in \Omega^1 (G, L)$ be a multiplicative $L$-valued $1$-form and let $\kappa \in \Omega^2 (M, L_M)$. Then the pair $(F^\ast \theta, F^\ast \kappa)$ is a $+1$-shifted contact structure if and only if so is $(\theta, \kappa)$.
	\end{theo}
	
	\begin{proof}
	It is enough to consider the case $\kappa = 0$, which follows from Proposition \ref{prop:+1_MC_Mequiv}, Lemma \ref{lemma:Moritacurvatureorbit} below, and the essential surjectivity of $f$.
	\end{proof}

	
%
		
	\begin{lemma}
		\label{lemma:Moritacurvatureorbit}
		Let $\theta\in \Omega^1(G,L)$ be a multiplicative $L$-valued $1$-form. If $\mc_{\theta}$ is a quasi-isomorphism on fibers over the point $x\in M$, then it is a quasi-isomorphism on fibers over every point in the orbit of $x$.
	\end{lemma}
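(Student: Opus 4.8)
The plan is to mirror, in the simpler $+1$-shifted setting, the orbit-invariance argument behind diagram \eqref{eq:mor_curv_orbit} in Remark \ref{rem:MC_RUTH}. It suffices to fix an arrow $g\colon x\to y$ in $G$ and to show that the cochain map \eqref{eq:cm_fibers_MC_+1} induced by $\mc_{\theta}$ on the fibers over $x$ is a quasi-isomorphism if and only if the one over $y$ is; since every point of the orbit of $x$ is joined to $x$ by an arrow, this gives the claim. The key point — and the reason the $+1$-shifted case is cleaner than the $0$-shifted one — is that $\mk_{\theta}$ and $\mk_{\theta}^{\dag}$ are \emph{honest} VBGs over $G\rightrightarrows M$ and $\mc_{\theta}\colon\mk_{\theta}\to\mk_{\theta}^{\dag}$ is a VBG morphism covering the identity.

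First I would record the relevant ``translation'' maps. For any VBG $V\rightrightarrows V_M$ over $G$ with core $C$, and any arrow $g\colon x\to y$, the $2$-term representation up to homotopy attached to $V$ under the equivalence of \cite{Gracia:VBgroupoids} (see also \cite{AC:RUTHs}) has a degree-$1$ structure section $R^V(g)$, built from translations in $V$ by a lift of $g$, which is a cochain map from the fiber $\big(C_x\to V_{M,x}\big)$ to the fiber $\big(C_y\to V_{M,y}\big)$. Since the $R^V(g)$ descend to a genuine $G$-action on the cohomology of the core complex, each $R^V(g)$ is a quasi-isomorphism, with $R^V(g^{-1})$ a homotopy inverse. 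Applying this to $V=\mk_{\theta}$ and to $V=\mk_{\theta}^{\dag}$ supplies vertical quasi-isomorphisms relating the fibers over $x$ to those over $y$.

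Next, because $\mc_{\theta}$ is a VBG morphism covering the identity, it induces a morphism of the associated $2$-term representations up to homotopy; in particular the square
\begin{equation*}
\begin{tikzcd}
\big(A_x\to T_xM\oplus L_{M,x}\big) \arrow[r, "\mc_{\theta}"] \arrow[d, "R^{\mk_{\theta}}(g)"'] & \big(T_x^{\dag}M\oplus\mathbbm{R}\to A_x^{\dag}\big) \arrow[d, "R^{\mk_{\theta}^{\dag}}(g)"] \\
\big(A_y\to T_yM\oplus L_{M,y}\big) \arrow[r, "\mc_{\theta}"'] & \big(T_y^{\dag}M\oplus\mathbbm{R}\to A_y^{\dag}\big)
\end{tikzcd}
\end{equation*}
commutes up to a homotopy — the degree-$1$ component of that RUTH morphism — exactly as the (non-commutative) square in diagram \eqref{eq:mor_curv_orbit} does. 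Hence the square commutes in cohomology, and since the two vertical maps are quasi-isomorphisms, $\mc_{\theta}$ is a quasi-isomorphism on the fibers over $x$ if and only if it is a quasi-isomorphism on the fibers over $y$, which proves the lemma.

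The main obstacle is the bookkeeping needed to make the translations $R^V(g)$ and the homotopy-commutativity of the square precise. I would either (i) simply invoke the equivalence between VBGs over $G$ and $2$-term representations up to homotopy of $G$ from \cite{Gracia:VBgroupoids}, under which the statement above is the standard compatibility of RUTH morphisms with the orbit-wise quasi-isomorphisms (compare diagrams \eqref{eq:mor_ker_orbit} and \eqref{eq:mor_curv_orbit} in the $0$-shifted case), or (ii) spell out $R^{\mk_{\theta}}(g)$ and $R^{\mk_{\theta}^{\dag}}(g)$ on core and unit components using the explicit structure maps of $\mk_{\theta}$ and $\mk_{\theta}^{\dag}$ and verify the homotopy identities by a direct computation in the spirit of the proof of Proposition \ref{prop:1-shift_MC}. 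Either way the verification is routine but notationally heavy, so I would keep it short and point the reader to the analogous (and harder) $0$-shifted discussion in Remark \ref{rem:MC_RUTH}.
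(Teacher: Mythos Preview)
Your proposal is correct and follows essentially the same approach as the paper: both arguments exploit the equivalence between VBGs and $2$-term RUTHs from \cite{Gracia:VBgroupoids}, observe that $\mc_{\theta}$ induces a morphism of RUTHs, and conclude that the $R_1(g)$-translations give quasi-isomorphisms on fibers intertwining the Morita curvatures in cohomology. The paper also mentions (without details) a more direct proof via the canonical $G$-action on $\ker\rho$ and $\operatorname{coker}\rho$, which is the ``bare hands'' version of your option (ii).
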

	
	\begin{proof}
	There is a direct proof that shows that the canonical actions of an arrow $g \colon x \to y$ in $G$ on $\ker \rho_x$ and $\operatorname{coker} \rho_x$ induce isomorphisms in the cohomology of the fibers of the Morita kernel (and its twisted dual) over $x, y$, commuting with the Morita curvatures.
	
	There is a more informative proof using RUTHs (see Appendix \ref{app:RUTH}), which adopts the same argument as that in Remark \ref{rem:MC_RUTH}. We now sketch such proof. We will actually prove that, similarly as in the case of $\partial$-closed $1$-forms on $M$ in Remark \ref{rem:MC_RUTH}, the map on fibers determined by the Morita curvature at different points of the same orbit are related by isomorphisms in cohomology. 
	
	To do this, choose an Ehresmann connection on $G$ and notice that it allows to split both the exact sequences
	\begin{equation*}
		\begin{tikzcd}
			0 \arrow[r] & t^\ast A \arrow[r]& MK_\theta \arrow[r] & s^\ast \big(TM \oplus L_M\big) \arrow[r]  & 0 
		\end{tikzcd},
	\end{equation*}
	and
	\begin{equation*}
		\begin{tikzcd}
			0 \arrow[r] & t^\ast \big(T^\dag M \oplus \mathbbm R_M\big) \arrow[r]& MK^\dag_\theta \arrow[r] & s^\ast A^\dag \arrow[r]   & 0 
		\end{tikzcd}
	\end{equation*}
	in the obvious way. Now use the techniques of \cite{Gracia:VBgroupoids} to construct $2$-term RUTHs from $MK_\theta, MK_\theta^\dag$ and the splittings. Denote by $\{R_k\}_{k \geq 0}, \{R^\dag_k\}_{k \geq 0}$ the structure operators of such RUTHs. From the categorical equivalence between VBGs and $2$-term RUTHs \cite{Gracia:VBgroupoids, DelHoyo:VBmorita}, the Morita curvature induces a morphism of RUTHs, and the RUTH morphism identities \eqref{eq:struct_sect_RUTH} now say (among other things) that, for every $g \colon x \to y$ in $G$, the diagonal arrows in the diagram
	\[
	{\scriptsize
	\begin{tikzcd}
		0 \arrow[rr] & & A_x \arrow[rr] \arrow[dd] & & T_xM\oplus L_{M,x} \arrow[rr] \arrow[dd] & & 0 \\
		& 0 \arrow[rr, crossing over] & & A_y \arrow[rr, crossing over] \arrow[from=ul, "R_1(g)"] & & T_yM \oplus L_{M,y} \arrow[rr] \arrow[from=ul, "R_1(g)"]& & 0 \\
		0 \arrow[rr] & & T_x^{\dagger}M \oplus \mathbbm{R} \arrow[rr] \arrow[dr, "R^\dag_1(g)"] & & A_x^{\dagger} \arrow[rr] \arrow[dr, "R^\dag_1(g)"] & & 0 \\
		& 0 \arrow[rr] & & T_y^{\dagger}M \oplus \mathbbm{R} \arrow[rr] \arrow[from=uu, crossing over]  & & A_y^{\dagger} \arrow[rr] \arrow[from=uu, crossing over] & & 0
	\end{tikzcd}}
	\]
	induce isomorphisms in the cohomology of the fibers of $MK_\theta$ and $MK_\theta^\dag$ over $x, y$ which actually commute with the Morita curvature in cohomology. This concludes the proof.
	\end{proof}
	
	\begin{definition}\label{def:cont_Mor_equiv}
	Two $+1$-shifted contact LBGs $(L_1, (\theta_1, \kappa_1)), (L_2, (\theta_2, \kappa_2))$ are \emph{contact Morita equivalent} if there exist an LBG $L'$, and VB Morita maps
	\[
	\begin{tikzcd}
		& L' \arrow[dl, "F_1"'] \arrow[dr, "F_2"] \\
		L_1 & & L_2
	\end{tikzcd}
	\]
such that the $+1$-shifted contact structures $(F_1^\ast \theta_1, F_1^\ast \kappa_1), (F_2^\ast \theta_2, F^\ast_2 \kappa_2)$ agree up to a gauge transformation.
	\end{definition}

	\begin{rem}
	Notice that the $2$-forms $\kappa_1, \kappa_2$ don't play any role in Definition \ref{def:cont_Mor_equiv}. We decided to keep them therein in view of the relationship between $+1$-shifted contact structures and $+1$-shifted Atiyah forms (see Theorem \ref{theor:+1_theta_omega_bij} below).
	\end{rem}

	\begin{prop}\label{prop:cont_Mor_equiv}
	Contact Morita equivalence is an equivalence relation.
	\end{prop}
	
	\begin{proof}
	The proof is similar to the proof of the analogous statement for the symplectic case. We only sketch it. As the $2$-forms $\kappa_1, \kappa_2$ don't play any role in Definition \ref{def:cont_Mor_equiv}, we simply ignore them, but they can be easily restored in the obvious way. Reflexivity and symmetry are obvious. For the transitivity, let $(L_1, \theta_1),  (L_2, \theta_2), (L_3, \theta_3)$ be $+1$-shifted contact LBGs and let 
	\[
	\begin{tikzcd}[column sep=-3]
		& (L', \beta') \arrow[dl, "F_1"'] \arrow[dr, "F_2"]  & & (L'', \beta'')   \arrow[dl, "G_1"'] \arrow[dr, "G_2"]  &\\
		(L_1, \theta_1) & & (L_2, \theta_2) & & (L_3, \theta_3)
	\end{tikzcd}
	\]
be two contact Morita equivalences, i.e.
\[
F_2^\ast \theta_2 - F_1^\ast \theta_1 - \partial \beta' = 0, \quad \text{and} \quad G_2^\ast \theta_3 - G_1^\ast \theta_2 - \partial \beta'' = 0.
\]	
The \emph{homotopy fiber product} \cite{DelHoyo:stacks} $L'''$ of $L' \to L_2 \leftarrow L''$ is an LBG (whose base groupoid $G''' \rightrightarrows M'''$ is the homotopy fiber product of the bases). Moreover, the projections $L' \leftarrow L''' \rightarrow L''$ are VB Morita maps fitting in the following diagram:
\begin{equation}\label{eq:homot_fiber_prod}
\begin{tikzcd}[column sep=-3]
	& &L''' \arrow[dl, "F"'] \arrow[dr, "G"]  & & \\
		& \big(L', \beta'\big) \arrow[dl, "F_1"'] \arrow[dr, "F_2"]  & & \big(L'', \beta''\big)   \arrow[dl, "G_1"'] \arrow[dr, "G_2"]  &\\
		\big(L_1, \theta_1\big) & & \big(L_2, \theta_2\big) & & \big(L_3, \theta_3\big)
	\end{tikzcd}
\end{equation}
The middle square in \eqref{eq:homot_fiber_prod} commutes up to a natural transformation $T \colon F_2 \circ F \Rightarrow G_1 \circ G$. It follows that
\[
	(G_1 \circ G)^\ast \theta_2  - (F_2 \circ F)^\ast \theta_2 = \partial \beta
\]
where $\beta = T^\ast \theta_2 \in \Omega^1 (M''', L'''_{M'''})$.
A straightforward computation now shows that $(F_1 \circ F)^\ast \theta_1$ and $(G_2 \circ G)^\ast \theta_3$ agree up to the gauge transformation by
$
G^\ast \beta''- F^\ast \beta' + \beta
$.
	\end{proof}
	
	\begin{theo}\label{theor:new_Mor_inv_+1-shifted_cont}
	Let $(L_1, (\theta_1, \kappa_1))$ be a $+1$-shifted contact LBG and let $L_2$ be a VB Morita equivalent LBG. Then there exists a $+1$-shifted contact structure $(\theta_2, \kappa_2)$ on $L_2$ such that $(L_1, (\theta_1, \kappa_1))$ and $(L_2, (\theta_2, \kappa_2))$ are contact Morita equivalent. Moreover the $+1$-shifted contact structure $(\theta_2, \kappa_2)$ is unique up to gauge transformations.
	\end{theo}
	
	\begin{proof}
	We will ignore $\kappa_1, \kappa_2$ as in the proof of Proposition \ref{prop:cont_Mor_equiv}. From Corollary \ref{cor:VB_Mor_equiv_LBGs} there exist an LBG $L'$ and VB Morita maps
	\[
	\begin{tikzcd}
		& L' \arrow[dl, "F_1"'] \arrow[dr, "F_2"] \\
		L_1 & & L_2
	\end{tikzcd}
	\]
	From \cite[Theorem 8.8]{Drummond:DifferentialformsinVBG} the assignment $F_1^\ast \colon \theta \mapsto F_1^\ast \theta$ establishes a bijection between $\partial$-cohomology classes of multiplicative $L_1$-valued and $L'$-valued $1$-forms (likewise for $F_2$). Therefore, there exists $\theta_2 \in \Omega^1 (G_2, L_2)$ and $\beta \in \Omega^1 (M', L'_{M'})$ such that $F_2^\ast \theta_2 - F_1^\ast \theta_1 = \partial \beta$. By Theorem \ref{theor:Mor_inv_+1-shifted_cont} and Remark \ref{rem:gauge_equiv_cont} $\theta_2$ is a $+1$-shifted contact structure. For the uniqueness, let $\tilde{\theta}_2 \in \Omega^1 (G_2, L_2)$ be another $+1$-shifted contact structure on $L_2$ such that $(L_1, \theta_1), (L_2, \tilde \theta_2)$ are contact Morita equivalent. Then, by \cite[Theorem 8.8]{Drummond:DifferentialformsinVBG}, $\theta_2$ and $\tilde \theta_2$ are in the same $\partial$-cohomology class. This concludes the proof.
	\end{proof}

	Proposition \ref{prop:cont_Mor_equiv} and Theorem \ref{theor:new_Mor_inv_+1-shifted_cont} motivate the following
	
	\begin{definition}\label{def:+1_shift_stack}
	A \emph{$+1$-shifted contact structure} on the LB stack $[L_M/L]$ is a contact Morita equivalence class of $+1$-shifted contact LBGs $(L, (\theta, \kappa))$ representing $[L_M/L]$.	\end{definition}
	
	\begin{rem}
	When $\theta_g \neq 0$ for all $g \in G$, then the Morita kernel can be replaced by the plain kernel $K_\theta$ up to Morita equivalence (see Remark \ref{rem:+1_Mor_ker_regular}). In this case, the standard curvature $R_\theta \colon K_\theta \to K_\theta^\dag$ is a VBG morphism and it is related to the Morita curvature by the following commutative diagram
	\begin{equation*}
			\begin{tikzcd}
				K_{\theta} \arrow[r, "\mathrm{in}"] \arrow[d, "R_{\theta}"'] & \mk_{\theta} \arrow[d, "\mc_{(\theta, \kappa)}"] \\
				K_{\theta}^{\dag} & \mk_{\theta}^{\dag} \arrow[l, "\mathrm{in}^\dag"]
			\end{tikzcd}
		\end{equation*}
	where both the inclusion $\mathrm{in} \colon K_\theta \to \mk_\theta$ and its twisted transpose map are VB Morita maps. It now follows that, in this case, $(\theta, \kappa)$ is a $+1$-shifted contact structure if and only if $R_\theta$ is a VB Morita map.
	\end{rem}

	Finally, we discuss the relationship between $+1$-shifted contact structures and $+1$-shifted Atiyah forms. The following theorem is a stacky analogue of Theorem \ref{prop:corrispondenza}, and it is yet another motivation for Definitions \ref{def:+1_shift_LBG} and \ref{def:+1_shift_stack}.
	
	\begin{theo}\label{theor:+1_theta_omega_bij}
		Let $(L \rightrightarrows L_M; G \rightrightarrows M)$ be an LBG. The assignment 
		\[
		\Omega^\bullet (G, L) \oplus \Omega^{\bullet +1}(M, L_M) \to \Omega^{\bullet + 1}_D(L)\oplus \OA^{\bullet+2}(L_M), \quad
		(\theta, \kappa) \mapsto \big(\omega \rightleftharpoons (\theta, \partial \kappa), \Omega \rightleftharpoons (\kappa, 0)\big),
		\]
		 establishes a bijection between $+1$-shifted contact structures and $+1$-shifted symplectic Atiyah forms on $L$. This bijection intertwines gauge equivalence and contact/symplectic Morita equivalence. 
	\end{theo}
	
	\begin{proof}
		Let $\theta\in \Omega^1(G,L)$, and $\kappa \in \Omega^2 (M, L_M)$. Consider the Atiyah forms $\omega\rightleftharpoons 
		(\theta,\partial \kappa)\in \OA^2(L)$ and $\Omega \rightleftharpoons 
		(\kappa,0)\in \OA^3(L_M)$. First notice that 
		\begin{equation*}\label{eq:delta=0}
		\dA \omega = \partial \Omega \quad \text{and} \quad \dA \Omega = 0.
		\end{equation*}
		Moreover, from Remark \ref{rem:partial_components}, $\theta$ is multiplicative if and only if so is 
		$\omega$. Even more, $(\theta, \kappa)$ is a $+1$-shifted contact structure
		 on $L$ if and only if $(\omega, \Omega)$ is a $+1$-shifted symplectic 
		 Atiyah form. To prove the latter 
		 claim it is enough to show that the Morita curvature of $(\theta, \kappa)$, equivalently of $(\theta, 0)$, 
		 is a quasi-isomorphism on fibers if and only if so is $\omega$. To do this, we follow a similar 
		 strategy as we did for $0$-shifted contact structures. So, 
		 take $x \in M$. The mapping cone of \eqref{eq:+1_map_fiber_omega_Atiyah} is
		\begin{equation}\label{eq:mapp_cone_+1-shift_At}
			\begin{tikzcd}
				0 \arrow[r] & A_x \arrow[r, "(-\mathcal{D} {,} \omega)"] & D_xL_M \oplus J^1_xL_M \arrow[r, "\omega + \mathcal{D}^{\dag}"] & A_x^{\dag} \arrow[r] &0
			\end{tikzcd}.
		\end{equation}
		Now, let $\nabla$ be a connection on $L_M$. Under the direct sum decompositions $DL_M= TM\oplus \mathbbm{R}_M$ and $J^1L_M= T^{\dag}M \oplus L_{M}$, \eqref{eq:mapp_cone_+1-shift_At} becomes
		\begin{equation*}
			\begin{tikzcd}
				0 \arrow[r] & A_x \arrow[rrr, "(-\rho {,} -F_\nabla {,} d^{t^{\ast}\nabla}\theta {,} -\theta)"] & & & T_xM \oplus \mathbbm{R} \oplus T_x^{\dag}M \oplus L_{M,x} \arrow[rrr, "d^{t^{\ast}\nabla}\theta + \theta^{\dag} + \rho^{\dag} + F^\dag_{\nabla}"] & & & A_x^{\dag} \arrow[r] & 0
			\end{tikzcd}.
		\end{equation*}
		But this is exactly the mapping cone of \eqref{eq:cm_fibers_MC_+1}, whence the claim. The bijectivity is straightforward. 
		
		For the second claim, just notice that the bijection in the statement intertwines the gauge transformation (of $+1$-shifted contact structures) by $(\beta, \gamma)$ and the gauge transformation (of $+1$-shifted symplectic Atiyah forms) by $\alpha \rightleftharpoons (\beta, \gamma)$. Likewise for contact/symplectic Morita equivalence. 
		\end{proof}
	
	
	\begin{rem}
	Let $(L\rightrightarrows L_M; G \rightrightarrows M)$ be an LBG equipped with a $+1$-shifted contact structure $(\theta, \kappa)$ and let $(\omega, \Omega)$ be the corresponding $+1$-shifted symplectic Atiyah form. It then follows by dimension counting, from $\omega$ determining a quasi-isomorphism on fibers, that $\dim G = 2 \dim M +1$.
	\end{rem}
	
	\begin{rem}
	Let $Q \rightrightarrows P$ and $(L \rightrightarrows L_M; G \rightrightarrows M)$ be as in Remark \ref{rem:hom_0-shift_sympl_grp}. Let $\theta \in \Omega^1 (G, L)$ (resp.~$\kappa \in \Omega^2 (M, L_M)$) and let $\Theta \in \Omega^1 (Q)$ (resp.~$K \in \Omega^2 (P)$) be the corresponsing degree $1$ homogeneous $1$-form (resp.~$2$-form), given by interpreting $\Gamma (L)$ (resp.~$\Gamma (L_M)$) as degree $1$ homogeneous functions on $Q$ (resp.~$P$). Similarly as in the $0$-shifted case (Remark \ref{rem:hom_0-shift_sympl_grp}), the assignment $(\theta, \kappa) \mapsto (d\Theta, dK)$ establishes a bijection between $+1$-shifted contact structures on $L$ and homogeneous $+1$-shifted symplectic structures of degree $1$ on $Q \rightrightarrows P$. This can be seen using Theorem \ref{theor:+1_theta_omega_bij} and the relationship between degree $1$ homogeneous differential forms and Atiyah forms \cite{Vitagliano:holomorphic} again.
	\end{rem}

We conclude the paper with some examples. We will discuss more interesting examples in a forthcoming work \cite{MTV:preparation}.

\begin{example}
Let $(G, K)$ be a \emph{contact groupoid} in the sense of Dazord \cite{Dazord}, i.e.~$K \subseteq TG$ is a contact distribution and a Lie subgroupoid of the tangent groupoid $TG \rightrightarrows TM$. Then $L = TG/K$ is an LBG and the projection $\theta \colon TG \to L$ is a multiplicative contact form (see, e.g., \cite{CS2015}, see also \cite{Grabowski:remarks}), hence a $+1$-shifted contact structure.
\end{example}

\begin{example}[Trivial $+1$-shifted contact structure]
Let $L_M \to M$ be a line bundle. Consider the general linear groupoid $G := \operatorname{GL}(L_M) \rightrightarrows M$ of $L_M$ and denote by $(L \rightrightarrows L_M; G \rightrightarrows M)$ the $LBG$ coming from the tautological action of $G$ on $L_M$. The core complex of $DL \rightrightarrows DL_M$ is
\[
\begin{tikzcd}
				0 \arrow[r] & DL_M \arrow[r, equal] & DL_M \arrow[r] & 0
			\end{tikzcd}.
\]
Given any $1$-form $\theta_M \in \Omega^1 (M, L_M)$, and any $2$-form $\kappa \in \Omega^2 (M, L_M)$, the pair $(\partial \theta_M , \kappa)$ is a $+1$-shifted contact structure on $L$, which is actually trivial up to gauge transformations.
\end{example}

	\begin{example}[Prequantization of $+1$-shifted symplectic structures]
		Recall from \cite{BX2003,Xu:prequantization} the notion of \emph{prequantization of a $+1$-shifted symplectic structure}. Let $G\rightrightarrows M$ be a Lie groupoid. An \emph{$S^1$-central extension of $G$} is a Lie groupoid $H\rightrightarrows M$ together with a Lie groupoid morphism $\pi\colon H\to G$ being the identity on objects, and an $S^1$-action on $H$, making $\pi\colon H\to G$ a principal \emph{$S^1$-bundle groupoid}, i.e.~the following compatibility between the principal action and the groupoid structure holds: if we denote by $\star$ the multiplication in $S^1$, and by a dot $.$ the $S^1$-action, then $(\phi .h)(\phi'.h')=(\phi \star \phi').hh'$ for all $\phi, \phi'\in S^1$, and $(h,h')\in H^{(2)}$.

Let $\pi \colon H \to G$ be an $S^1$-central extension. Then a \emph{pseudoconnection} in $H$ is a pair $(\theta, \kappa)$ consisting of a $1$-form $\theta \in \Omega^1(H)$ on $H$ and a $2$-form $\kappa \in \Omega^2(M)$ on $M$, such that $\theta$ is a principal connection $1$-form on $H$. Finally, let $(\omega, \Omega)$ be a $+1$-shifted symplectic structure on $G$. A  \emph{prequantization of $(\omega, \Omega)$} is an $S^1$-central extension $\pi\colon H \to G$ with a pseudo-connection $(\theta, \kappa)$, such that 
		\begin{equation}\label{eq:prequant}
			\partial \theta=0, \quad d\theta=\partial \kappa - \pi^{\ast}\omega, \quad \text{and} \quad d\kappa=\Omega.
		\end{equation}
		A prequantization $(\theta, \kappa)$ exists if $(\omega, \Omega)$ is an \emph{integral cocycle} in the total complex of the Bott-Shulmann-Stasheff double complex \cite[Proposition 3.3]{BX2003}.
		
		We can regard $\theta$ as a $1$-form with values in the \emph{trivial LBG} $\mathbbm R_G \to G$, i.e.~the LBG corresponding to the trivial action of $G$ on the trivial line bundle $\mathbbm R_M$. Then $\theta$ is multiplicative. By definition, $\theta_h\neq 0$ for all $h\in H$ and its curvature is $R_{\theta} = d\theta|_{K_\theta} \colon K_{\theta} \to K_{\theta}^{\dag}$ which is VB Morita. To see this, denote by $A_H, A_G$ the Lie algebroids of $H, G$. The fiber of $K_\theta$ over $x \in M$ is
		\begin{equation}\label{eq:fiber_K_theta_prequant}
		\begin{tikzcd}
				0 \arrow[r] & A_{H, x} \cap K_{\theta, x} \arrow[r, "\rho"] & T_x M \arrow[r] & 0
			\end{tikzcd}.
		\end{equation}
	If we use $d\pi \colon TH \to TG$ to identify $K_\theta$ with $\pi^\ast H$, then \eqref{eq:fiber_K_theta_prequant} identifies with
	\begin{equation}
		\begin{tikzcd}
				0 \arrow[r] & A_{G, x} \arrow[r, "\rho"] & T_x M \arrow[r] & 0
			\end{tikzcd}.
		\end{equation}
		Now, from the second one of \eqref{eq:prequant}, the curvature $R_\theta$ determines the following cochain map on fibers:
		\begin{equation}\label{eq:R_theta_fiber_prequant}
		\begin{tikzcd}
				0 \arrow[r] & A_{G, x} \arrow[r, "\rho"] \arrow[d, "- \omega - \kappa \circ \rho"']& T_x M \arrow[r] \arrow[d, "- \omega- \rho^\ast \circ \kappa"] & 0 \\
				0 \arrow[r] & T_x^\ast M \arrow[r, "\rho^\ast"'] & A_{G, x}^\ast \arrow[r] & 0
			\end{tikzcd},
		\end{equation}
	As $(\omega, \Omega)$ is a $+1$-shifted symplectic structure, then $\omega \colon TG \to T^\ast G$ is a VB Morita map and the vertical arrows in \eqref{eq:R_theta_fiber_prequant} are a quasi-isomorphism. We conclude that $R_\theta$ is a VB Morita map as claimed and $(\theta, \kappa)$ is a $+1$-shifted contact structure.
	\end{example}

	\begin{example}[Dirac-Jacobi structures]
	$+1$-shifted symplectic structures are the global structures on Lie groupoids integrating Dirac structures twisted by a closed $3$-form, seen as infinitesimal structures on the corresponding Lie algebroid \cite{Zhu:twisted}. Similarly, $+1$-shifted contact structures are the global structures integrating \emph{Dirac-Jacobi structures} \cite{Vitagliano:djbundles}.
	
	Let $L_M \to M$ be a line bundle. The \emph{omni-Lie algebroid} \cite{CL2010} of $L_M$ is the VB $\mathbbm{D}L_M:= DL_M \oplus J^1L_M$ together with the following structures:
	\begin{itemize}
	\item the projection $\pr_D \colon \mathbbm DL_M \to DL_M$ onto $DL_M$;
	\item the non-degenerate, symmetric $L$-valued bilinear form of split signature:
	\begin{equation*}
			\big\langle\hspace{-3pt}\big\langle -, - \big\rangle\hspace{-3pt}\big\rangle \colon \mathbbm{D}L_M \otimes \mathbbm{D}L_M \to L_M, \quad \big\langle\hspace{-3pt}\big\langle (\delta ,\psi), (\delta' , \psi') \big\rangle\hspace{-3pt}\big\rangle= \langle \psi, \delta'\rangle + \langle \psi', \delta\rangle;
		\end{equation*}
		\item the bracket on sections:
		\begin{equation*}
			\big[\hspace{-3pt}\big[-,-\big]\hspace{-3pt}\big]\colon \Gamma (\mathbbm D L_M) \times \Gamma (\mathbbm D L_M)  \to \Gamma (\mathbbm DL_M), \quad \big[\hspace{-3pt}\big[(\Delta,\psi), (\Delta',\psi')\big]\hspace{-3pt}\big]:= \Big([\Delta,\Delta'], \mathcal{L}_{\Delta}\psi' - \iota_{\Delta'}\dA\psi\Big),
		\end{equation*}
		where $\mathcal L_\Delta$ is the \emph{Lie algebroid Lie derivative} along $\Delta$.
	\end{itemize}
	The omni-Lie algebroid is an instance of an \emph{$E$-Courant algebroid} \cite{CLS2010}, an \emph{$AV$-Courant algebroid} \cite{LB2011} and a \emph{contact-Courant algebroid} \cite{G2013}, and can be regarded as a \emph{contact version} of the standard Courant algebroid: the generalized tangent bundle $\mathbbm T M = TM \oplus T^\ast M$.
	
	A \emph{Dirac-Jacobi structure} on $L_M$ is a vector subbundle $\mathbbm L \subseteq \mathbbm DL_M$ which is Lagrangian with respect to the inner product $\langle\!\langle -, - \rangle\!\rangle$ and whose sections are preserved by the bracket $[\![-,-]\!]$ (\cite{Vitagliano:djbundles}, see also \cite{W2000,W2004}). A Dirac-Jacobi structure is a contact version of a Dirac structure. Any Dirac-Jacobi structure $\mathbbm L \to M$ is a Lie algebroid, with anchor given by $\rho := \sigma \circ \pr_D \colon \mathbbm L \to TM$, and Lie bracket given by the restriction of $[\![-,-]\!]$. Moreover $\mathbbm L$ acts on $L_M$ via $\pr_D \colon \mathbbm L \to DL_M$. Dirac-Jacobi structures encompass contact and pre-contact structures, flat line bundles, locally conformally symplectic and locally conformally pre-symplectic structures, Jacobi structures, Poisson and Dirac structures as distinguished examples. Additionally, generalized complex structures in odd dimensions (aka generalized contact structures \cite{VW2016,SV2020}) can be seen as certain complex Dirac-Jacobi structures. This shows the wide range of applications of their theory.
	
	One can also define \emph{twisted Dirac-Jacobi structures}, in the same spirit as twisted Dirac structures (see \cite{NdCP2006} for the trivial line bundle case). To do this first notice that one can \emph{deform} the bracket $[\![-,-]\!]$ on sections of the omni-Lie algebroid $\mathbbm D L_M$ via a $\dA$-closed Atiyah $3$-form $\Omega \in \OA^3 (L_M)$ as follows: define the new \emph{deformed bracket}
	\begin{equation*}
			\big[\hspace{-3pt}\big[-,-\big]\hspace{-3pt}\big]_\Omega \colon \Gamma (\mathbbm D L_M) \times \Gamma (\mathbbm D L_M)  \to \Gamma (\mathbbm DL_M), \ \  \big[\hspace{-3pt}\big[(\Delta,\psi), (\Delta',\psi')\big]\hspace{-3pt}\big]_\Omega := \big[\hspace{-3pt}\big[(\Delta,\psi), (\Delta',\psi')\big]\hspace{-3pt}\big] + \big(0, \iota_\Delta \iota_{\Delta'} \Omega\big).
		\end{equation*}
		Then $(\mathbbm DL_M, \pr_D, \langle\!\langle -, - \rangle\!\rangle, [\![-,-]\!]_\Omega)$ is again a contact Courant algebroid. An \emph{$\Omega$-twisted Dirac-Jacobi structure} on $L_M$ is a vector subbundle of $\mathbbm L \subseteq \mathbbm D L_M$ which is Lagrangian with respect to $\langle\!\langle -, - \rangle\!\rangle$ and whose sections are now preserved by the deformed bracket $[\hspace{-3pt}[-,-]\hspace{-3pt}]_\Omega$. 
		
	\emph{Twisted Dirac-Jacobi structures integrate to $+1$-shifted contact structures} in the following sense. First of all, a \emph{Dirac-Jacobi algebroid} is a Lie algebroid $A \to M$ together with a Lie algebroid isomorphism $A \cong \mathbbm L \subseteq \mathbbm D L_M$ onto a twisted Dirac-Jacobi structure. Now, let $(L \rightrightarrows L_M; G \rightrightarrows M)$ be an LBG, and let $A$ be the Lie algebroid of $G$. Out of a $+1$-shifted contact structure $(\theta, \kappa)$ on $L$ one can construct a Dirac-Jacobi algebroid structure $A \cong \mathbbm L \subseteq \mathbbm D L_M$ twisted by $\Omega \rightleftharpoons (\kappa, 0)$ in a canonical way. Moreover, if $G$ is source-simply connected, then the latter construction establishes a one-to-one correspondence between $+1$-shifted contact structures on $L$ and twisted Dirac-Jacobi algebroid structures $A \cong \mathbbm L \subseteq \mathbbm D L_M$. In the untwisted case $\Omega = 0$ (i.e.~$\kappa = 0$) this is essentially \cite[Theorem 10.11]{Vitagliano:djbundles} together with the simple remark that a \emph{pre-contact groupoid} in the sense of \cite{Vitagliano:djbundles} is just a rephrasing of an LBG equipped with a $+1$-shifted contact structure of the form $(\theta, 0)$. In the twisted case, the proof is essentially the same and we omit it.
	
	Notice that, unlike the case of twisted Dirac structures, twisted Dirac-Jacobi structures are not really new structures with respect to untwisted Dirac-Jacobi structures (i.e.~$\Omega = 0$). The reason is essentially the acyclicity of the der-complex. Indeed, from $\dA \Omega = 0$, we get $\Omega = \dA B$, where $B = \iota_{\mathbbm I} \Omega \in \OA^2 (L_M)$. The map
		\[
		\mathbbm F_B \colon \mathbbm DL \to \mathbbm DL,\quad \mathbbm F_B (\delta, \psi) := \big(\delta, \psi - \iota_\delta B\big),
		\]
	 bijectively transforms $\Omega$-twisted Dirac-Jacobi structures to untwisted Dirac-Jacobi structures. Moreover, for every $\Omega$-twisted Dirac-Jacobi structure $\mathbbm L$, the restriction $\mathbbm F_B \colon \mathbbm L \to \mathbbm F_B (\mathbbm L)$ is a Lie algebroid isomorphism identifying the infinitesimal actions on $L_M$. This shows that $\Omega$-twisted Dirac-Jacobi structures are essentially the same as untwisted Dirac-Jacobi structures, and it is actually the infinitesimal counterpart of the remark that every $+1$-shifted contact structure can be gauge transformed into a $+1$-shifted contact structure of the form $(\theta, 0)$ (Remark \ref{rem:gauge_equiv_cont}).
	\end{example}
	
	\appendix
	
	\section{Representations up to Homotopy}\label{app:RUTH}
	
	 Recall from \cite{AC:RUTHs} that a representation up to homotopy (RUTH) of a Lie groupoid $G \rightrightarrows M$ is a graded vector bundle 
	\[
	E = \bigoplus_{m \in \mathbbm Z} E^m \to M
	\] 
	over $M$ (with $E$ usually assumed to be bounded from both sides) equipped with a differential $\partial^E \colon C(G; E)^\bullet  \to C(G; E)^{\bullet +1}$ on the $C(G)^\bullet := C^\infty (G^{(\bullet)})$-module
	\[
	C(G; E) = \bigoplus_{n \in \mathbbm Z} C(G; E)^n, \quad C(G; E)^n := \bigoplus_{k+m = n} \Gamma (t^\ast E^m \to G^{(k)})
	\]
	where we denote by $t \colon G^{(k)} \to M$ the composition of the projection $\mathrm{pr}_1 \colon G^{(k)} \to G$ onto the first factor followed by the target. Similarly, in the following, we will denote by $s \colon G^{(k)} \to M$ the composition of the projection onto the last factor followed by the source. The differential $\partial^E$ is required to give to $C(G; E)$ the structure of a DG module over $C(G)$. A morphism of RUTHs is just a morphism of DG modules. 	
	
	According to \cite[Proposition 3.2]{AC:RUTHs}, a RUTH on $E$ is equivalent to a sequence $\{R_k\}_{k \geq 0}$, where $R_k$ is a section of the vector bundle
	\[
	\operatorname{Hom}^{-k + 1}\big(s^\ast E, t^\ast E \big) \to G^{(k)}
	\]	
	of degree $-k + 1$ homomorphisms between the indicated pullback graded vector bundles, and the $R_k$'s satisfy the following identities: for all $k \geq 0$ and all $(g_1, \ldots, g_k) \in G^{(k)}$, 
	\begin{equation}\label{eq:struct_sect_RUTH}
	\sum_{j=1}^{k-1}(-)^j R_{k-1}(g_1, \ldots, g_jg_{j+1}, \ldots, g_k) = \sum_{j= 0}^k (-)^j R_j (g_1, \ldots, g_j) \circ R_{k-j} (g_{j+1}, \ldots, g_k).
	\end{equation}
	In particular, $R_0\colon E^\bullet \to E^{\bullet + 1}$ is a differential, hence it gives to $E$ the structure of a complex of vector bundles over $M$. 
The $R_k$'s will be called the \emph{structure operators of the RUTH}. Similarly, a morphism $\Phi\colon E \to E'$ of RUTHs is equivalent to a sequence $\{\Phi_k\}_{k \geq 0}$, where $\Phi_k$ is a section of the vector bundle 
	\[
	\operatorname{Hom}^{-k}\big(s^\ast E, t^\ast E' \big) \to G^{(k)},
	\]
	 and the $\Phi_k$'s satisfy the following identities: for all $k\geq 0$ and all $(g_1, \ldots, g_k) \in G^{(k)}$,
	 \begin{equation}\label{eq:struct_sect_RUTH_mor}
	 \begin{aligned}
	 & \sum_{i+j = k}(-)^j \Phi_j (g_1, \ldots, g_j) \circ R_i (g_{j+1}, \ldots, g_k) \\
	 & = \sum_{i+j = k} R'_j (g_1, \ldots, g_j) \circ \Phi_i (g_{j+1}, \ldots, g_k) + \sum_{j = 1}^{k-1} (-)^j \Phi_{k-1}(g_1, \ldots, g_jg_{j+1}, \ldots, g_k),
	 \end{aligned}
	 \end{equation}
	 where $R_k, R_k'$ are the structure operators of $E, E'$. In particular, $\Phi_0 \colon (E, R_0) \to (E', R'_0)$ is a cochain map.
	 The $\Phi_k$'s will be called the \emph{components of the morphism of RUTHs}. 
	 
	 If $E = E^0$ is concentrated in degree $0$, and $G$ acts on $E^0$, then letting $R_1\colon s^\ast E^0 \to t^\ast E^0$ being the (left) action, and $R_k = 0$ for $k \neq 1$, defines a RUTH (giving the standard cohomology of $G$ with coefficients in $E$). 
	 
	 In this paper, we also need the \emph{adjoint RUTH}: choose once for all an Ehresmann connection on $G$ \cite[Definition 2.8]{AC:RUTHs}, i.e.~a right splitting $h\colon s^\ast TM \to TG$ of the exact sequence of VBs over $G$
\begin{equation*}
		\begin{tikzcd}
			0 \arrow[r] & t^\ast A \arrow[r]& TG \arrow[r] & s^\ast TM \arrow[r]  \arrow[l, bend left, "h"] & 0 
		\end{tikzcd}
	\end{equation*}
	which agrees with $du \colon TM \to TG$ on units, and use it to promote the core complex of $TG$
	\begin{equation}\label{eq:core_compl_ad_RUTH:bis}
	\begin{tikzcd}
			0 \arrow[r] & A \arrow[r, "\rho"] & TM \arrow[r]&0
		\end{tikzcd}
	\end{equation}
	to a RUTH $(C(G; A \oplus TM), \partial^{\mathrm{Ad}})$, the adjoint RUTH \cite{AC:RUTHs}, as follows. Besides the differential $R_0^T = \rho$ of the core complex \eqref{eq:core_compl_ad_RUTH:bis}, the latter RUTH has got only two more non-trivial structure operators, the $1$st and the $2$nd, denoted $R^T_1, R^T_2$, which are defined as follows: let $\varpi = h \circ ds - \operatorname{id}\colon TG \to t^\ast A$ be the left splitting corresponding to $h$, then
	\[
	R^T_1(g)a =  - \varpi_g \big(dL_g (a)\big)\quad \text{and} \quad R^T_1 (g) v = dt \big( h_g (v)\big), \quad a \in A_{s(g)}, \quad v \in T_{s(g)}M,
	\]
	where $L_g$ is the left translation along $g \in G$, and, moreover
	\[
	R^T_2 (g_1, g_2) v = - \varpi_{g_1g_2} \Big(h_{g_1}\big(R^T_1 (g_2)v\big) \cdot h_{g_1} (v) \Big), \quad v \in T_{s(g_2)} M,
	\]
	where we used the multiplication in $TG \rightrightarrows TM$.
	
	There is also a RUTH coming from the VBG $DL \rightrightarrows DL_M$. Namely, an Ehresmann connection $h$ in $G$ also induces a right splitting $h^D\colon s^\ast DL_M \to DL$ of the exact sequence
\begin{equation}\label{eq:SES_Appendix}
		\begin{tikzcd}
			0 \arrow[r] & t^\ast A \arrow[r] & DL \arrow[r] & s^\ast DL_M \arrow[r]  \arrow[l, bend left, "h^D"] & 0
		\end{tikzcd},
	\end{equation}
as follows. Given $\delta \in D_xL_M$, $x \in M$, for every arrow $g\colon x \to y$, there exists a unique derivation $h^D_g (\delta) \in D_g L$ such that $\sigma (h^D_g (\delta)) = h_g (\sigma (\delta))$, and $Ds (h^D_g (\delta)) = \delta$ (see \cite[Section 4.1]{EspositoTortorellaVitagliano:infinitesimal_automorphism} for more details). In its turn $h^D$ determines a $2$-term RUTH \cite{Gracia:VBgroupoids}. The underlying cochain complex of vector bundles is
\begin{equation}\label{eq:core_compl_DL_RUTH}
\begin{tikzcd}
			0 \arrow[r] & A \arrow[r, "\mathcal D"] & DL_M \arrow[r] &  0,
		\end{tikzcd}
\end{equation}
and besides the differential $R_0^D = \mathcal D$ of  \eqref{eq:core_compl_DL_RUTH}, there are only two more non-trivial structure components, the $1$st and the $2$nd, denoted $R^D_1, R^D_2$, which are defined in a similar way as $R_1^T, R_2^T$ above:
	\[
	R^D_1(g)a =  R^T_1 (g)a \quad \text{and} \quad R^D_1 (g) \delta = Dt \big( h^D_g (\delta)\big), \quad a \in A_{s(g)}, \quad \delta \in D_{s(g)}L_M,
	\]
	and, moreover
	\begin{equation}\label{eq:gst36}
	R^D_2 (g_1, g_2) \delta = R^T_2 (g_1, g_2) \sigma(\delta).
	\end{equation}
It is easy to see that the symbol intertwines the structure operators $R^D$ with the structure operators $R^T$ \cite{EspositoTortorellaVitagliano:infinitesimal_automorphism}. Moreover $R_1^D(g)$ acts as the identity on endomorphisms $\mathbbm R \cong \operatorname{End} L_{M, s(g)} \subseteq D_{s(g)}L_M$.
Finally, $R_2^D(g_1, g_2)$ vanishes on endomorphisms.

		We conclude the appendix recalling that there is an equivalence between the category of RUTHs of a fixed Lie groupoid $G$, concentrated in non-positive degrees, and the category of \emph{higher VB groupoids} on $G$, i.e.~simplicial vector bundles over $G^{(\bullet)}$ (up to some technical aspects, see \cite{dHT2021}). Under such equivalence, RUTHs concentrated in degrees $-1,0$ correspond to VBGs \cite{Gracia:VBgroupoids, DelHoyo:VBmorita}.  For instance, the adjoint RUTH of $G \rightrightarrows M$ corresponds to the tangent VBG $TG \rightrightarrows TM$.

	\section{Proof of Proposition \ref{prop:MR_RUTH_morph}}\label{app:RUTH_mor}
	\begin{proof}
		We have to prove the RUTH morphism identities \eqref{eq:struct_sect_RUTH_mor}, where the $R_k$ are the structure operators of the RUTH living on the Morita kernel, and the $R'_k = R^\dag_k$ are the structure operators of the $L$-twisted dual RUTH. For simplicity of notation, in this proof, for any $g \in G$, we will denote $g_T. := R_1^T(g)$ and $g_D. := R_1^D (g)$.
		
		For $k=0$, we have to prove that $\Phi_0 \circ R_0 = R^{\dagger}_0 \circ \Phi_0$.
		This is true because the $0$-th component of the Morita curvature is a cochain map.
		For $k=1$, we have to prove that for any $g$ in $G$
		\begin{equation*}
			\Phi_0 \circ R_1(g) - R_1^{\dagger}(g) \circ \Phi_0 = R_0^{\dagger} \circ \Phi_1(g) + \Phi_1 (g) \circ R_0,
		\end{equation*}
		or, in other words, that for any $a\in A_{s(g)}$ 
		\begin{equation}\label{eq:whstf}
			F_{\nabla}\big(g_T.a\big) - g_D.F_{\nabla}(a)= \nabla_{g_T.\rho(a)} - g_D.\nabla_{\rho(a)}, 
		\end{equation}
		and that for any $v\in T_{s(g)}M$ and $v' \in T_{t(g)}M$
		\begin{equation}\label{eq:hsfet}
			d^{\nabla}\theta(g_T. v, v') - g.d^{\nabla}\theta (v, g^{-1}_T. v') =
			\big(\nabla_{g_T.v}-g_D. \nabla_v\big)\theta(v') + \big(\nabla_{g^{-1}_T.v}- g_D^{-1}. \nabla_v\big) g . \theta(v).
		\end{equation}
		
		In order to prove \eqref{eq:whstf}, let $\delta \in D_{s(g)} L_M$, and compute
				\[
				\begin{aligned}
		f_\nabla (g_D. \delta) & = g_D.\delta - \nabla_{\sigma (g_D.\delta)} = g^D.(\delta - \nabla_{\sigma (\delta)}) + g^D.\nabla _{\sigma (\delta)}-\nabla_{g_T.\sigma (\delta)} \\
		& = g_D.f_\nabla(\delta)+ g^D.\nabla _{\sigma (\delta)}-\nabla_{g_T.\sigma (\delta)}.
		\end{aligned}
		\]
		This shows that 
		\begin{equation}\label{eq:gshsy}
		f_\nabla (g_D. \delta) - g_D.f_\nabla (\delta ) = g^D.\nabla _{\sigma (\delta)}-\nabla_{g_T.\sigma (\delta)}
		\end{equation}
		and  \eqref{eq:whstf} follows immediately by putting $\delta = \mathcal D_a$.

		In order to prove \eqref{eq:hsfet}, use Equation \eqref{eq:omegaandcomponents} to compute
		\begin{equation*}
			d^{\nabla}\theta(g_T.v, v')= \omega(g_D. \delta, \delta') - f_\nabla(g_D.\delta)\theta(v') + f_\nabla(\delta')\theta(g_T.v),
		\end{equation*}
		where $v = \sigma (\delta), v' = \sigma (\delta')$. Similarly
		\begin{equation*}
			d^{\nabla}\theta(v, g_T^{-1}.v')= \omega(\delta, g^{-1}_D.\delta') - f_\nabla(\delta)\theta(g^{-1}_T.v') + f_\nabla(g^{-1}_D.\delta')\theta(v).
		\end{equation*}
		But it easily follows from the multiplicativity of $\theta$ and $\omega$ that  $\theta(g_T.v)=g.\theta(v)$ and $\omega(g_D. \delta, \delta') = g.\omega(\delta, g^{-1}_D.\delta')$, whence
		\begin{equation}
			d^{\nabla}\theta(g_T. v, w) - g.d^{\nabla}\theta (v, g^{-1}_T. w) = \Big(f_\nabla(\delta)- f_\nabla(g_D.\delta)\Big)\theta(w) + \Big(f_\nabla(\delta')-f_\nabla(g^{-1}_D.\delta')	\Big)g. \theta(v).
		\end{equation}
		Using \eqref{eq:gshsy} again, we get \eqref{eq:hsfet}.
		
		For $k=2$, we have to prove that for any $(g, g') \in G^{(2)}$		\begin{equation*}
			\Phi_0\circ R_2(g, g') - \Phi_1(g)\circ R_1(g') = R_1^{\dagger}(g)\circ \Phi_1(g') + R_2^{\dagger}(g,g')\circ \Phi_0 - \Phi_1(gg').
		\end{equation*}
		This means that, for any $v\in T_{s(g')}M$,
		\begin{equation}
			\label{eq:7.2}
			F_{\nabla}\big(R_2^T(g,g')v\big)+ \big(\nabla_{g_{T}.(g'_{T}.v)}-g_{D}. \nabla_{g'_{T}. v}\big) + g_D.\big(\nabla_{g'_{T}.v}- g'_{D}.\nabla_v\big) - \big(\nabla_{gg'_T.v}- gg'_D. \nabla_v\big) = 0,
		\end{equation}
		and that, for any $\lambda\in L_{M,s(g')}$ and $v\in T_{t(g)} M$,
		\begin{equation}\label{eq:2536dh}
		\begin{aligned}
			&gg'. \Big(g'{}^{-1}_D.\big(\nabla_{g^{-1}_{T}.v} - g^{-1}_{D}. \nabla_v\big) + F_{\nabla}\big(R_2^T(g'{}^{-1}, g^{-1})v\big) \\ &+ \big(\nabla_{g'{}^{-1}_{T}. (g_{T}^{-1}.v)} - g'{}^{-1}_{D}. \nabla_{g^{-1}_{T}.v}\big) - \big(\nabla_{(gg')^{-1}_T. v} - (gg')^{-1}_D .\nabla_v\big) \Big) \lambda=0.
			\end{aligned}
		\end{equation}
		But
		\begin{equation*}
			g_{T}.(g'_{T}.v) - (gg')_T.v = \rho\big(R_2^T(g,g')v\big), \quad v \in T_{s(g')} M,
		\end{equation*}
		and, similarly,
		\begin{equation*}
			g_{D}.(g'_{D}.\delta) - (gg')_D.\delta= \mathcal D_{R_2^D(g,g')\delta} = \mathcal D_{R_2^T(g,g')\sigma(\delta)}, \quad \delta \in D_{s(g')} L_M,
		\end{equation*}
		where we also used \eqref{eq:gst36}. It follows that the left hand side of \eqref{eq:7.2} is
		\begin{equation*}
		\begin{aligned}
			& F_{\nabla}\big(R_2^T(g,g')v\big)+ \big(\nabla_{g_{T}.(g'_{T}.v)}-g_{D}. \nabla_{g'_{T}. v}\big) + g_D.\big(\nabla_{g'_{T}.v}- g'_{D}.\nabla_v\big) - \big(\nabla_{gg'_T.v}- gg'_D. \nabla_v\big) \\
			& = \big(\mathcal D_{R_2^T(g,g')v} - \nabla_{\rho(R_2^T(g,g')v)}\big) + \big(\nabla_{g_{T}.(g'_{T}.v)}-g_{D}. \nabla_{g'_{T}. v}\big) +g_D. \big(\nabla_{g'_{T}.v}- g'_{D}.\nabla_v\big) - \big(\nabla_{gg'_T.v}- gg'_D. \nabla_v\big)\\
			&= g_{D}.(g'_D. \nabla_v)- g_D.\nabla_{g_T'.v} - g_D. \big( g'_{D}.\nabla_v - \nabla_{g'_{T}.v}\big) \\
			& =0.
			\end{aligned}
		\end{equation*}
		
		Clearly, \eqref{eq:2536dh} follows from \eqref{eq:7.2}.
				
For $k=3$, the RUTH morphism identity is trivially satisfied because $L_M$ is a plain representation (no higher homotopies). We leave the straightforward details to the reader. As, for degree reasons, there are no higher identities to check, this concludes the proof.
			\end{proof}
		
	\bibliographystyle{amsplain}

\end{document}